\theoremstyle{plain}
\newtheorem{theorem}{Theorem}[section]
\newtheorem*{theorem*}{Theorem}
\newtheorem{lemma}[theorem]{Lemma}
\newtheorem{prop}[theorem]{Proposition}
\newtheorem{corollary}{Corollary}
\newtheorem{definition}[theorem]{Definition}
\newenvironment{claim}[1]{\par\noindent\underline{\textit{Claim}:}\space#1}{}
\newenvironment{claimproof}[1]{\par\noindent{\textit{Proof of Claim}:}\space#1}{\hfill $\blacksquare$}
\newcommand{\pkld}[2]{\mathcal{P}_{{#1}}({#2})}
\newcommand{\pkl}{\mathcal{P}_{\kappa}\lambda}
\newcommand\scalemath[2]{\scalebox{#1}{\mbox{\ensuremath{\displaystyle #2}}}}
\newcommand{\pq}[1]{\scalemath{0.8}{#1}}
\newcommand{\pk}[1]{\mathcal{P}_{\kappa}({#1})}
\title[HIGHER STATIONARITY AND DERIVED TOPOLOGIES ON $\pk{A}$]{HIGHER STATIONARITY AND DERIVED\\ TOPOLOGIES ON $\pk{A}$}
\author{M. Catalina Torres}
\address{Departament de Matem\`atiques i Inform\`atica, Universitat de Barcelona. 
Gran Via de les Corts Catalanes, 585,
08007 Barcelona, Catalonia.}
\email{mactorrespa@gmail.com}
\begin{document}

\begin{abstract}
Let $\kappa$ be a regular limit cardinal,  $\kappa \subseteq A$. We study a notion of $n$-s-stationarity on $\pk{A}$. We construct a sequence of topologies $\langle \tau_0, \tau_1, \dots  \rangle $  on $\pk{A}$ characterising the simultaneous reflection of a pair of $n$-s-stationary sets in terms of elements in the base of $\tau_n$. This result constitutes a complete generalisation to the context of $\pk{A}$ of Bagaria's prior characterisation of $n$-simultaneous reflection in terms of derived topologies on ordinals.
\end{abstract}

\maketitle

 \section{Introduction}
 
The study of combinatorial properties of $\pkl:= \{ x \subseteq \lambda : |x| < \kappa\}$ where  $\kappa$  denotes  an uncountable regular cardinal and $\kappa \leq \lambda$, has a long history. It originated  considering natural generalisations of combinatorial  properties  on an infinite cardinal $\kappa$ within the context of large cardinal notions (see \cite{J1,J2,Ku}). For example,  strongly compact and supercompact cardinals can be seen as strengthenings  of weakly compact and measurable cardinals, respectively, by extending certain properties from a single cardinal, $\kappa$, to $\pkl$.  This work later lead to the study of various related topics, including ideals, filters, stationary sets, and stationary reflection on $\pkl$ (see, e.g., \cite{J3,Car0,Car1,Do,MFS,Sh2}). In this paper we investigate some generalisations of reflection properties from the case of $\kappa$ to the case of $\pkl$, with the aim of yielding compelling results that potentially exhibit significantly higher levels of consistency strength.\\

In \cite{B0}, Bagaria introduced the following iterated notion of stationary reflection for a given limit ordinal $\alpha$: $A\subseteq \alpha$ is \textit{$0$-stationary} in $\alpha$ if and only if it is unbounded in $\alpha$. For $ \xi >0$, $A \subseteq \alpha$ is $\xi$-stationary in $\alpha$ if and only if for every $\zeta<\xi$ and every $S$ $\zeta$-stationary in $\alpha$, there is a limit ordinal  $\beta < \alpha$  such that $S \cap \beta$ is $\zeta$-stationary in $\beta$. A regular cardinal $\kappa$ is $\xi$-stationary if and only if $\kappa$  is $\xi$-stationary in $\kappa$. It is easy to see that a subset $S\subseteq \alpha$ is $1$-stationary in $\alpha$ if and only if $S$ is stationary in $\alpha$. Thus, an ordinal $\alpha$ is $2$-stationary if and only if it reflects stationary sets. 
 In \cite{B1}, Bagaria, Magidor and Sakai showed that in the constructible universe,L, this iterated form of stationarity was tightly related with the notion of indescribability. They proved that in $L$, a regular cardinal is $(n+1)$-stationary if and only if it is $\Pi_n^1$ indescribable. Later on, in \cite{B2}, Bagaria  revisited $n$-stationarity on ordinals, this time in the stronger form of $n$-simultaneous stationarity. Given a limit ordinal $\alpha$: $A\subseteq \alpha$ is \textit{$0$-simultaneously-stationary} ($0$-s-stationary for short) in $\alpha$ if and only if it is unbounded in $\alpha$. For $ \xi >0$, $A \subseteq \alpha$ is $\xi$-simultaneously-stationary  ($\xi$-s-stationary for short) in $\alpha$ if and only if for every $\zeta<\xi$ and every $S,T$ $\zeta$-stationary in $\alpha$, there is a limit ordinal $\beta < \alpha$  such that $S \cap \beta$ and $T \cap \beta$ are both  $\zeta$-stationary in $\beta$. As shown in  \cite{B2} $\xi$-simultaneous stationarity is characterised in terms of the non-discreteness of derived topologies on ordinals.\\
 
Given a topological space $(X,\tau)$, the Cantor derivative operator is the map $d_{\tau}: \mathcal{P}(X) \rightarrow \mathcal{P}(X)$ such that $$d_{\tau}(A):= \{x \in X : x \text{ is a limit point of } A \text{ in }\tau \}.$$ For an ordinal $\delta$, the set $\mathcal{B}_0:=\{0\} \cup \{ (\alpha, \beta) : \alpha < \beta \leq \delta\}$ constitutes a base for a topology $\tau_0$ in $\delta$, namely the interval topology. In \cite{B2} Bagaria  constructed an increasing sequence of topologies $\langle \tau_0, \tau_1 \dots , \tau_{\xi} \dots \rangle$ on $\delta$ by means of successive  applications of Cantor's derivative operator on the topological space $(\delta , \tau_0)$.  The sets $d_{\xi}(A):=d_{\tau_{\xi}}(A)$ for $A \subseteq \delta$ are taken as new open sets  at  step $\xi+1$, and unions are taken at limit stages. That is,  $\tau_{\xi+1}$ is defined to be the topology generated by  $\mathcal{B}_{\xi+1}:= \mathcal{B}_{\xi}\cup \{d_{\xi}(A): A \subseteq \delta\}$, and $\tau_{\xi}$ is is defined to be the one generated by  $\mathcal{B}_{\xi}:=\bigcup_{\zeta < \xi}  \mathcal{B}_{\zeta}$ when $\xi$ is limit ordinal.\\

Bagaria proved  various properties of $d_{\xi}$ acting on sets of ordinals, particularly on simultaneous stationary sets. He showed that for all $A \subseteq \delta$, $$d_{\xi}(A)=\{\alpha < \delta : A \text{ is } \xi \text{-s-stationary in } \alpha \}.$$ This result led to two of the main results of his paper: \vspace{2mm}

\textit{Theorem 2.11, \cite{B2}. For every $\xi$, an ordinal $\alpha < \delta$ is not isolated in the $\tau_{\xi}$ topology on $\delta$ if and only if $\alpha$ is $\xi$-s-stationary in $\alpha$.} \vspace{2mm}

\textit{Theorem 3.1, \cite{B2}. For every $\xi$, an ordinal $\alpha$  is $\xi$-s-stationary in $\alpha$ if and only if  $\mathcal{I}_{\alpha}^{\xi}=\{ x \subseteq \alpha : x$ is not s-stationary in $\alpha \}$ is a proper ideal.}\\

The investigation into higher stationarity on ordinals of \cite{B0,B1,B2} prompted an inaugural effort to define higher stationarity on $\pk{A}:=\{x \subseteq A : |x| < \kappa\}$. In  \cite{S1}, H. Brickhill, S. Fuchino and H. Sakai proposed a definition of $n$-stationarity in $P_{\kappa}(A)$, where $\kappa$ is a regular limit cardinal and $\kappa \subseteq A$. Additionally, they suggested  that the concept of indescribability  in $\pk{A}$, given by Baumgartner in \cite{Bau}, may yield the equivalence (in $L$) between indescribability and high stationary reflection, similarly as in the ordinal case (\cite{B1},\cite{B2}). In  \cite{S1}, the authors also suggested  an investigation  into the consistency strength of $n$-stationarity in $\pk{A}$. Specifically, they asserted that if $\kappa$ is $\lambda$-supercompact, then $\pkl$ is $n$-stationary for all $n < \omega$, leaving its converse as an open question.\\

In this paper, our focus lies on exploring the definition of $n$-stationarity proposed in \cite{S1}, with the aim of extending the results obtained in \cite{B2} concerning derived topologies on ordinals to the context of $\pk{A}$. Specifically, we examine an equivalent version of this notion and establish that $\kappa$ being weakly Mahlo is both a sufficient and necessary condition for demanding the $1$-stationarity of $\pk{A}$. Furthermore, we observe that a similar condition for $2$-stationarity may require a severe strengthening of the large cardinal properties of $\kappa$, illustrating the first analogy with the ordinal case. Our exploration reveals that $n$-stationary sets in $\pk{A}$  exhibit connections among themselves and with club, stationary, and unbounded sets, mirroring the connections observed in the ordinal case. The only apparent exception is that a stationary subset of $\pk{A}$ may not necessarily be $1$-stationary, which, as we will later emphasise, does not constitute a significant obstacle to our purpose. Nevertheless, we did found that $1$-stationarity aligns with another form of stationarity, strong stationarity.\\
 
We extend the notion of simultaneous stationarity to the case of  $\pk{A}$, by defining, for each $T \subseteq \pk{A}$, the set $d_n^s(T)$  of  points in $\pk{A}$ in which $T$ is $n$-simultaneously-stationary. We prove that these sets behave exactly like the derived sets in the ordinal case. Particularly, we establish an analogue of  Proposition 2.10 in  \cite{B2}, which served as the key to proving one of Bagaria's main results in \cite{B2}. This exploration sheds light on how to define a starting topology for $\pk{A}$. Then we define a sequence of topologies on  $\pk{A}$ for which the sets $d_n^s(T)$ coincide with the actual derived sets on $\pk{A}$. These results are summarised in Theorems \ref{th2} and \ref{t3}, which generalise Theorems 2.11 and 3.1 in \cite{B2} to the case of $\pk{A}$. \\

In the last section, we present a sufficient condition for $\pk{\kappa}$ to be $n$-stationary and $n$-simultaneously-stationary for all $n < \omega$, namely that $\kappa$ is totally indescribable (Corollary \ref{totind}). Recall that in the ordinal case,  $\kappa$ totally indescribable implies that $\kappa$ is $n$-simultaneously stationary on $\kappa$ for all $n$. Thus, our proof on $\pk{\kappa}$, serves as corroboration that, in the context of high stationarity, $\pk{A}$ is as well a generalisation of $\kappa$, identifying  $\pk{\kappa}$ and $\kappa$. Finally, we provide a sufficient condition for $\pkl$ to be $n$-simultaneously-stationary for all $n < \omega$, namely  that $\kappa$ is $\lambda$-supercompact (Theorem \ref{lsc}). This also yields a proof for the analogous (non-simultaneous) assertion  made in  \cite{S1} concerning  to $n$-stationary sets (Corollary \ref{lsc2}).  \\
 
 We want to acknowledge that, simultaneously and independently, Cody, Lambie-Hanson and Zhang \cite{BCJ} defined a sequence of two-cardinal derived topologies and obtained results similar to those in Section 3 and 4 of the present article involving the relationship between various two-cardinal notions of $\xi$-s-stationarity and two-cardinal derived topologies.

 \section{Notation and framework }
 
Everywhere in the sequel, if not specified otherwise, $\kappa$  denotes  a  regular limit cardinal, and $A$ denotes a set such that $\kappa \subseteq A$. Recall that $\pk{A}$ denotes the set $\{ x \subseteq A : |x| < \kappa\}$. In \cite{J1,J2,J3}, Jech defined the following  well known notions for a set $S\subseteq \pk{A}$; 

\begin{itemize}
\item[(1)] $S$ is \textit{unbounded ($\subseteq$-cofinal)} in $\pk{A}$ iff for any $x \in \pk{A}$ there is some $y \in S$ such that $x \subseteq y$. 
\item[(2)] $S$ is \textit{closed} in $\pk{A}$ iff for any $\{ x_{\xi}  : \xi < \beta \} \subseteq S$ with $\beta < \kappa$ and $x_{\xi} \subseteq x_{\zeta}$ for $\xi \leq \zeta < \beta$, $\bigcup_{\xi < \beta} x_{\xi} \in S$. 
\item[(3)] $S$ is \textit{club} in $\pk{A}$ iff $S$ is closed and cofinal in  $\pk{A}$. \item[(4)] $S$ is \textit{stationary} in $\pk{A}$ iff for any $C$ club in $\pk{A}$, $S \cap C \neq \varnothing$. \\
\end{itemize}

Originally, Jech employed the term $``$unbounded$"$ for definition (1), however, in the present work, we opt for the terminology $S$ is $``$$\subseteq$-cofinal$"$ or simply $S$ is $``$cofinal$"$, to avoid confusion with  the property that for any $x \in \pk{A}$ there is some $y \in S$ such that $y \not\subseteq x$. While in the ordinal case these properties happen to be equivalent, in the context of $\pk{A}$ -due to a lack of total order- we do not have this correspondence.\\

The following are some well known facts that can be found easily in the literature \cite{J3,J4, PV}. For every $x \in \pk{A}$, the sets  $$x{\uparrow} \mathrel{:=} \{y \in \pk{A} : x \subseteq y\}  \ \text{ and } \ \mathcal{C}_{\kappa}:=\{ x \in \pk{A} : x \cap \kappa \text{ is a  cardinal}\}$$ are club subsets of $\pk{A}$, recall that for $C_{\kappa}$ to be a club we need that $\kappa$ is a limit cardinal.  A set $T$ is called directed if for any $x,y \in T$, there is a $z \in T$ such that $x\cup y \subseteq z$. A subset $C$ of $\pk{A}$ is  closed if and only if for every directed set $T\subseteq C$ of cardinality $< \kappa$, $\bigcup T \in C $ (See \cite{J3}). A set $S\subseteq \kappa$ is unbounded (or closed, or stationary) in the sense of $\kappa$ if and only if it is cofinal (or closed, or stationary) in the sense of $\pk{\kappa}$. Club subsets of $\pk{A}$ generate a filter,  the \textit{club filter on $\pk{A}$} and we denote its corresponding dual ideal by $NS_{\kappa,A}$. \\

Finally, recall that, given a topological space $\langle X, \tau \rangle$,  $x \in X$ and $S \subseteq X$, $x$ is  a limit point of $S$ in the topology $\tau$  if and only if for every open set $U\in \tau$ such that $x\in U$, $U \cap (S \setminus \{x \}) \neq \varnothing$. Also, $\mathcal{B}\subseteq \mathcal{P}(X)$ is a base for a topology in $X$ if and only if (1) $X = \bigcup \mathcal{B}$, and (2) if $x \in V_1 \cap V_2$ for $V_1,V_2 \in \mathcal{B}$, then $x \in V$ for some $V \in  \mathcal{B}$, such that $V \subseteq V_1 \cap V_2$. \\

As mentioned in the introduction, Bagaria  \cite{B2}, employed the symbol "$d$" to represent the Cantor derivative operator, which extracts from a given set the set of its limit points in a specific topology. However, for the sake of our presentation, we adopt the symbol  "$\partial$" for this operation. More precisely, for a topological space $(X, \tau)$ and a subset $A \subseteq X$, we define  the operator $\partial_{\tau} : \mathcal{P}(X) \rightarrow \mathcal{P}(X)$ by 
\begin{align*}
\partial_{\tau}(T):= \{x \in T : x \text{ is a limit point of } T \text{  in the topology }\tau \}.
\end{align*} 
This choice of notation allows us to reserve the symbol "$d$" for denoting the set of points for where the higher stationary of some $T \subseteq \pk{A}$ is reflected. Ultimately, our goal is to equate this set with $\partial_{\tau}(T)$.

 \section{Higher stationary subsets of $\pk{A}$ }

Bagaria's definition of higher stationarity can be understood as an iterative process, akin to the reflection of certain sets. It happens to be precisely an iteration of the notion of stationarity in ordinals, because the following equivalence holds: $S$ is a stationary in an ordinal $\alpha$ of uncountable cofinality if and only if for all unbounded subset $T$ of $\alpha$, there is some $\beta \in S$ such that $T \cap \beta$ is unbounded in $\beta$. This is, $S$ is a stationary in $\alpha$ if and only if $S$ is $1$-stationary in $\alpha$. Similarly, the notion of $2$-stationarity in $\alpha$ aligns with the notion of reflection of stationary sets in $\alpha$, which has been extensively studied. However, the implications of higher stationarity, as elucidated in \cite{B2},  primarily emerges from a novel approach: the iterative reflection of specific sets. \\

Reflection of stationary sets in $\pk{A}$, where $A$ is an ordinal, has also been extensively studied by authors such as Jech, Sakai, and Shelah \cite{Sh1,Sh2,MFS,S2,S3}. Since any form of reflection in $\pk{A}$ involves at least two parameters, namely $\kappa$ and $A$, several definitions have been proposed. The proposal of extending the $``$iterative process of reflecting certain sets$"$ made in  \cite{B2}, to the context of $\pk{A}$ is, however, quite novel. This approach was recently introduced by  H. Brickhill, S. Fuchino, and H. Sakai, as presented in \cite{S1}. This particular concept will be our focus, as it closely relates to the notion from \cite{B2} that we aim to generalise.  \vspace{2mm}

\begin{definition}[H. Brickhill, S. Fuchino and H. Sakai \cite{S1}]\label{nst1} 
Let $\kappa$ be a regular cardinal and let $A$ be a set with $\kappa \subseteq A$.
\begin{enumerate}
\item $S\subseteq \pk{A}$ is \textbf{$0$-stationary} (BFS) in $\pk{A}$ iff  $S$  is cofinal in $\pk{A}$.
\item Let $0<n<\omega$. $S\subseteq \pk{A}$ is \textbf{$n$-stationary} (BFS) in $\pk{A}$ iff for all $m<n$ and  all $T \subseteq \pk{A}$ $m$-stationary in $\pk{A}$, there is $x \in S$ such that 
\begin{itemize}
\item[(i)] $\mu := x \cap \kappa$ is a regular cardinal.
\item[(ii)] $T\cap \pkld{\mu}{x}$ is $m$-stationary in $\pkld{\mu}{x}$.
\end{itemize}
\end{enumerate}
\end{definition}\vspace{1mm}

Here, the stipulation is that  $\kappa$ and $x \cap \kappa$ must be  regular cardinals. In our definition below, we introduce a refinement, by instead requiring $\kappa$ and $|x \cap \kappa|$ are regular limit cardinals. As a result of this refinement, the converse of Proposition \ref{ct} also holds, laying the groundwork for the results on this paper.  \vspace{1mm}

\begin{definition}\label{nst} 
Let $\kappa$ be a regular limit cardinal and let $A$ be a set with $\kappa \subseteq A$.
\begin{enumerate}
\item $S\subseteq \pk{A}$ is \textbf{$0$-stationary} in $\pk{A}$ iff  $S$  is cofinal in $\pk{A}$.
\item  Let $0<n<\omega$. $S\subseteq \pk{A}$ is \textbf{$n$-stationary} in $\pk{A}$ iff for all $m<n$ and  all $T \subseteq \pk{A}$ $m$-stationary in $\pk{A}$, there is $x \in S$ such that 
\begin{itemize}
\item[(i)] $\mu := |x \cap \kappa|$ is a regular limit cardinal,
\item[(ii)] $T\cap \pkld{\mu}{x}$ is $m$-stationary in $\pkld{\mu}{x}$.
\end{itemize}
\item For any $T\subseteq \pk{A} $, $d_n(T):=\{ x \in \pk{A}: \mu:= |x \cap \kappa|$ is a regular limit cardinal and $T \cap \pkld{\mu}{x} \text{ is $n$-stationary in } \pkld{\mu}{x}\}$.
\end{enumerate}\vspace{1mm}
\end{definition}

\begin{lemma} \label{1sticf} 
If $S \subseteq \pk{A}$ is $1$-stationary in $ \pk{A}$, then $S$ is cofinal in  $\pk{A}$. 
\end{lemma}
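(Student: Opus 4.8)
The plan is to reduce the statement to its contrapositive formulation at the level of a single point: it suffices to show that for every $z \in \pk{A}$ there is some $y \in S$ with $z \subseteq y$. The idea is to feed the $1$-stationarity of $S$ a $0$-stationary test set tailored to $z$, namely the cone over $z$. Recall from the discussion above that for each $z \in \pk{A}$ the set $z{\uparrow} = \{y \in \pk{A} : z \subseteq y\}$ is club in $\pk{A}$, hence cofinal in $\pk{A}$, hence $0$-stationary in $\pk{A}$ in the sense of Definition \ref{nst}(1).

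The steps, in order, are the following. First, fix an arbitrary $z \in \pk{A}$ and put $T := z{\uparrow}$, a $0$-stationary subset of $\pk{A}$. Second, apply the hypothesis that $S$ is $1$-stationary with $m = 0$ and this $T$: there is $x \in S$ such that $\mu := |x \cap \kappa|$ is a regular limit cardinal and $T \cap \pkld{\mu}{x}$ is $0$-stationary, i.e.\ cofinal, in $\pkld{\mu}{x}$. Third, observe that $\pkld{\mu}{x}$ is nonempty — it contains $\varnothing$, since $\mu \geq \omega$ — so a cofinal subset of it cannot be empty; pick $w \in T \cap \pkld{\mu}{x}$. Fourth, unpack the two memberships: $w \in T$ gives $z \subseteq w$, while $w \in \pkld{\mu}{x}$ gives $w \subseteq x$, so $z \subseteq x$. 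Since $x \in S$ and $z$ was arbitrary, $S$ is cofinal in $\pk{A}$.

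I do not expect a genuine obstacle here; the argument is essentially just the correct choice of test set followed by unwinding the definition. The only point worth a word of care is that one should use ``$T \cap \pkld{\mu}{x}$ is $0$-stationary in $\pkld{\mu}{x}$'' only through its weakest consequence, namely that this intersection is nonempty — the full cofinality in the smaller space is not needed — and that one should check $\pkld{\mu}{x} \neq \varnothing$ so that nonemptiness of a cofinal subset is legitimate.
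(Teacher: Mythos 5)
Your proposal is correct and follows essentially the same route as the paper: both test the $1$-stationarity of $S$ against the cone $z{\uparrow}$, which is club and hence cofinal, and then extract $z \subseteq x$ for the reflecting point $x \in S$. The only (harmless) difference is in the last step, where the paper notes $\bigcup\bigl(z{\uparrow} \cap \pkld{\mu}{x}\bigr) = x$ while you simply pick a single $w \in z{\uparrow} \cap \pkld{\mu}{x}$ and use $z \subseteq w \subseteq x$.
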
 

\begin{proof} 
Suppose  $S \subseteq \pk{A}$ is  $1$-stationary and let $y \in \pk{A}$. The set $y{\uparrow} \mathrel{=}  \{z \in \pk{A} : y \subseteq z\}$ is club, and in particular cofinal in $\pk{A}$. By $1$-stationarity of $S$, there is  $x \in S$ such that $\mu := |x \cap \kappa| $ is a regular limit cardinal and $y{\uparrow}\cap \pkld{\mu}{x}$ is cofinal in $\pkld{\mu}{x}$. Then $\bigcup (y{\uparrow} \mathrel{\cap}  \pkld{\mu}{x})=x$, and so  $y \subseteq x$. 
\end{proof} \vspace{1mm} 

\begin{definition}\label{nsst} 
Let $\kappa$ be a regular limit cardinal and let $A$ be a set with $\kappa \subseteq A$.
\begin{enumerate}
\item $S\subseteq \pk{A}$ is \textbf{$0$-simultaneously-stationary}  ($0$-s-stationary for short) in $\pk{A}$ iff  $S$  is cofinal in $\pk{A}$.
\item Let $0<n<\omega$. $S\subseteq \pk{A}$ is \textbf{$n$-simultaneously-stationary} ($n$-s-stationary for short) in $\pk{A}$ iff for every $m<n$ and every $T_1,T_2 \subseteq \pk{A}$ $m$-s-stationary in $\pk{A}$, there is some $x \in S$ such that
\begin{itemize}
\item[(i)] $\mu:= |x \cap \kappa|$ is a regular limit cardinal,
\item[(ii)] $T_1 \cap \pkld{\mu}{x}$ and $T_2 \cap \pkld{\mu}{x}$ are both $m$-s-stationary in $\pkld{\mu}{x}$.
\end{itemize}
\item For any $T\subseteq \pk{A} $, $d_n^s(T):=\{ x \in \pk{A} : \mu:= |x \cap \kappa|$ is a regular limit cardinal and $T \cap \pkld{\mu}{x}$ is $n$-s-stationary in $\pkld{\mu}{x}\}$.
\end{enumerate}\vspace{1mm}
\end{definition}

We can also define a $``$simultaneous stationary$"$ version of Definition \ref{nst1}. To do so,  in Definition \ref{nsst} (1) and (2), replace $``$$n$-simultaneously-stationary$"$ with  $``$$n$-simultaneously-stationary (BFS)$"$, and replace $``\mu:= |x \cap \kappa|$ is a regular limit cardinal$"$ with $``\mu:= x \cap \kappa$ is a regular  cardinal$"$.\\

The adjustment made in Definition \ref{nst}, compared to Definition \ref{nst1}, does not introduce a significant difference for the purposes of this paper. This is because the main results in \cite{B2}, which we aim to generalise, focus on simultaneous stationarity. And we will prove (Proposition \ref{limitnolimit2.0}), that for $\kappa$ a regular limit cardinal, the two definitions are actually equivalent.\\

Notice that, in Definitions \ref{nst} and \ref{nsst}, the sets $d_n(T)$ and $d_n^s(T)$ depends not only on $T$ and $n$, but on $\kappa$ and $A$ as well. If we denote $d_n(T)$ by  $d_n(\kappa,A,T)$, for every  $T \subseteq \pkld{\mu}{x} \subseteq \pk{A}$ we  have $$d_n(\mu,x,T)= d_n(\kappa,A,T) \cap \pkld{\mu}{x}.$$ 

Therefore, for the sake of readability we will omit the extra notation $``$$d_n(\kappa,A,T)$$"$ and whenever  we want to specify that we are considering $d_n$ acting only on subsets $T$ of $\pkld{\mu}{x} \subseteq \pk{A}$, we will talk about $d_n(T) \cap \pkld{\mu}{x}$. The same applies to $d_n^s$.

From the definitions above, we immediately see that  $S \subseteq \pk{A}$ is $n$-stationary in $\pk{A}$ if and only if for all $m< n$ and every $T$ that is $m$-stationary in $\pk{A}$, we have $S \cap d_m(T) \neq \varnothing$.  Similarly, $S \subseteq \pk{A}$ is $n$-s-stationary in $\pk{A}$ if and only if for all $m< n$ and for all $T_1,T_2$ that are $m$-s-stationary in $\pk{A}$, we have $S \cap d_m^s(T_1) \cap d_m^s(T_2) \neq \varnothing$. \\

Notice that simultaneous $1$-stationarity is a stronger notion than $1$-stationarity; that is, if $S \subseteq \pk{A}$ is $1$-s-stationary  in $ \pk{A}$, then $S$  is  also $1$-stationary  in $ \pk{A}$. Thus, $d_1^s(T)\subseteq d_1(T)$ for every $T\subseteq \pk{A}$. For the case $n=0$, by definition, we actually have  $d_0(A)=d_0^s(A)$  for all $A\subseteq \pk{A}$. A simple but very useful observation is that  if $S \subseteq \pk{A}$ is $n$-s-stationary  (or $n$-stationary) in $ \pk{A}$ and $S \subseteq S'$, then $S'$  is also $n$-s-stationary (or $n$-stationary)  in $ \pk{A}$. Another straightforward fact to prove  is that for any $X,Y\subseteq \pk{A}$ and any $n < \omega$, we have $d_n(X \cap Y) \subseteq d_n(X) \cap d_n(Y)$ and $d_n^s(X \cap Y) \subseteq d_n^s(X) \cap d_n^s(Y)$.\vspace{1mm}

\begin{lemma}\label{nstimst} 
Let $n<\omega$. If $S \subseteq \pk{A}$ is $n$-s-stationary (respectively $n$-stationary) in $ \pk{A}$, then $S$ is $m$-s-stationary (respectively $m$-stationary)   in $ \pk{A}$ for all $m < n$. Consequently, $d_n^s(X)  \subseteq d_m^s(X)$ (respectively $d_n(X)  \subseteq d_m(X)$) for any $X \subseteq \pk{A}$ and $m\leq n$. 
\end{lemma}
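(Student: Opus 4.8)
The plan is to derive the first assertion essentially by unwinding Definitions~\ref{nst} and~\ref{nsst} — no induction on $n$ is needed — and then to read off the inclusions of the derived sets by relativising to $\pkld{\mu}{x}$. I treat the simultaneous case, the non-simultaneous one being word for word the same with a single set $T$ replacing the pair $T_1,T_2$. So fix $S$ that is $n$-s-stationary in $\pk{A}$ and let $m<n$. Suppose first that $m\ge 1$. By Definition~\ref{nsst}(2), to show $S$ is $m$-s-stationary I must produce, for every $k<m$ and all $k$-s-stationary $T_1,T_2\subseteq\pk{A}$, an $x\in S$ with $\mu:=|x\cap\kappa|$ a regular limit cardinal and $T_1\cap\pkld{\mu}{x}$, $T_2\cap\pkld{\mu}{x}$ both $k$-s-stationary in $\pkld{\mu}{x}$. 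But this requirement involves only the index $k$, and $k<m<n$ forces $k<n$, so the hypothesis that $S$ is $n$-s-stationary delivers exactly such an $x$. Hence $S$ is $m$-s-stationary in $\pk{A}$.

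The case $m=0$ (so $n\ge 1$) is the only step that is not purely a matter of definitions, and I expect it to be the crux, modest as it is. Specialising the index to $0<n$ in Definition~\ref{nsst}(2) shows that $S$ satisfies verbatim the clause defining $1$-s-stationarity, so $S$ is $1$-s-stationary, hence $1$-stationary (as already observed in the text preceding the lemma), and therefore cofinal in $\pk{A}$ by Lemma~\ref{1sticf}; that is, $S$ is $0$-s-stationary. In the non-simultaneous version one argues identically, using that $n$-stationarity of $S$ with index $0$ gives $1$-stationarity directly and then applying Lemma~\ref{1sticf}.

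For the ``Consequently'' clause, fix $X\subseteq\pk{A}$ and $m\le n$; the case $m=n$ is trivial, so take $m<n$. If $x\in d_n^s(X)$ then, by Definition~\ref{nsst}(3), $\mu:=|x\cap\kappa|$ is a regular limit cardinal and $X\cap\pkld{\mu}{x}$ is $n$-s-stationary in $\pkld{\mu}{x}$. Running the argument above with $\mu$ and $x$ in the roles of $\kappa$ and $A$ then shows $X\cap\pkld{\mu}{x}$ is $m$-s-stationary in $\pkld{\mu}{x}$, i.e.\ $x\in d_m^s(X)$; thus $d_n^s(X)\subseteq d_m^s(X)$. The inclusion $d_n(X)\subseteq d_m(X)$ follows in the same way from the non-simultaneous version.
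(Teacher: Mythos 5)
Your proof is correct and follows essentially the same route as the paper's: for $m\ge 1$ the claim is a direct unwinding of the definition, and the $m=0$ case is reduced to cofinality via $1$-s-stationarity $\Rightarrow$ $1$-stationarity $\Rightarrow$ Lemma~\ref{1sticf}, which is exactly how the paper's base case works. The only (harmless) difference is that you dispense with the induction wrapper the paper uses, which carries no real weight there anyway; the ``Consequently'' clause is handled by the same relativisation to $\pkld{\mu}{x}$ in both arguments.
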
 

\begin{proof}  
By induction on $n$. The case $n=1$ follows from Lemma \ref{1sticf}.  Suppose we have the result for all $l< n$, take $S \subseteq \pk{A}$  $n$-s-stationary and fix $m< n$. Let $T_1,T_2 \subseteq \pk{A}$ be $l$-s-stationary for some $l< m$. Since $S$ is $n$-s-stationary in $\pk{A}$ and $l<m\leq n$, there is some $x \in S$ such that $\mu := |x \cap \kappa|$ is a regular limit cardinal and $T_1 \cap \pkld{\mu}{x}$, $T_2 \cap \pkld{\mu}{x}$ are $l$-s-stationary in $\pkld{\mu}{x}$. Therefore, $S$ is $m$-s-stationary   in $ \pk{A}$, and  $d_n^s(X)  \subseteq d_m^s(X)$  for any $X \subseteq \pk{A}$. The respective proof for when $S$ is just $n$-stationary in $\pk{A}$ is completely analogous to the previous proof, replacing $``n$-s-stationarity$"$ by $``n$-stationarity $"$ and taking only one $T \subseteq \pk{A}$ to be $l$-stationary for some $l< m$.
\end{proof} \vspace{2mm} 

Recall that a given ordinal $\kappa > \omega$ is weakly Mahlo if and only if  the set $\{ \mu < \kappa : \mu \ \text{is a regular cardinal} \}$ is stationary in $\kappa$. It is easy to show that such a $\kappa$ is a regular limit cardinal. Moreover, since the set of limit cardinals below  a regular limit cardinal $\kappa$ is a club, we also have the following equivalent definition: an  ordinal $\kappa > \omega$ is weakly Mahlo if and only if the set $\{ \mu < \kappa : \mu \text{ is a regular limit cardinal} \}$ is stationary in $\kappa$. We will conveniently use both definitions.\vspace{1mm}

\begin{prop}\label{siwm}  
If $\pk{A}$ is $1$-stationary  in $\pk{A}$, then $\kappa$ is weakly Mahlo.
\end{prop}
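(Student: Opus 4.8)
The plan is to prove the statement directly, by showing that the set $R:=\{\mu<\kappa:\mu\text{ is a regular limit cardinal}\}$ meets every club in $\kappa$; by the equivalent characterisation of weak Mahloness recorded just before the statement, this is exactly what is needed. So I would fix an arbitrary club $C\subseteq\kappa$ and try to locate a member of $R$ inside $C$, using $1$-stationarity of $\pk{A}$ applied to a carefully chosen cofinal test set.

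The crucial preliminary move is to shrink $C$ to $D:=C\cap\{\mu<\kappa:\mu\text{ is a cardinal}\}$, which is again a club in $\kappa$ because $\kappa$ is a limit cardinal (so the cardinals below $\kappa$ form a club, and the intersection of two clubs is a club). Then I would feed $1$-stationarity the set $T:=\{x\in\pk{A}:x\cap\kappa\in D\}$, first checking that $T$ is cofinal (i.e. $0$-stationary) in $\pk{A}$: given $y\in\pk{A}$, since $|y\cap\kappa|<\kappa$ one may pick $\gamma\in D$ with $\gamma>\sup(y\cap\kappa)$, and then $z:=y\cup\gamma\in\pk{A}$ satisfies $z\cap\kappa=\gamma\in D$ and $y\subseteq z$. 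Applying $1$-stationarity of $\pk{A}$ with $m=0$ and this $T$ yields $x\in\pk{A}$ such that $\mu:=|x\cap\kappa|$ is a regular limit cardinal and $T\cap\pkld{\mu}{x}$ is cofinal in $\pkld{\mu}{x}$.

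It remains to see that $\mu\in C$. Exactly as in the proof of Lemma \ref{1sticf}, cofinality of $T\cap\pkld{\mu}{x}$ in $\pkld{\mu}{x}$ forces $\bigcup(T\cap\pkld{\mu}{x})=x$, hence $x\cap\kappa=\bigcup\{z\cap\kappa:z\in T\cap\pkld{\mu}{x}\}$. Every $z$ appearing here lies in $T$, so $z\cap\kappa\in D$ is a cardinal below $\kappa$ — in particular a genuine ordinal, i.e. an initial segment of $\kappa$ — and therefore $\delta:=x\cap\kappa$ is the supremum of a nonempty set of cardinals lying in $D$. Consequently $\delta$ is itself a cardinal, $\delta<\kappa$, and $\delta\in C$: if the supremum is attained then $\delta\in D\subseteq C$, and otherwise $\delta$ is a limit point of $D\subseteq C$ with $\delta<\kappa$, so $\delta\in C$ by closedness. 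Since $\delta$ is a cardinal we get $\mu=|x\cap\kappa|=|\delta|=\delta$, whence $\mu\in C$; and $\mu$ is a regular limit cardinal by the choice of $x$. Thus $\mu\in C\cap R$, and as $C$ was arbitrary, $R$ is stationary.

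The main obstacle — really the only genuine idea in the argument — is realising that one must intersect $C$ with the cardinals before forming $T$: with a plain club $C$ the very same computation only delivers an ordinal $\delta\in C$ with $|\delta|=\mu$, and such a $\delta$ need not be a regular limit cardinal nor even a cardinal, so nothing is gained. A secondary point that must be handled with care is that $x\cap\kappa$ is a priori merely a set of ordinals below $\kappa$; it collapses to an honest ordinal precisely because each $z\cap\kappa$ with $z\in T$ belongs to $D\subseteq\kappa$ and is therefore an initial segment of $\kappa$, so the union of all of them is again an initial segment.
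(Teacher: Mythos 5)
Your proof is correct and follows essentially the same strategy as the paper: restrict the club to cardinals, build a cofinal test set coding $C$, apply $1$-stationarity, and conclude that $\mu=|x\cap\kappa|$ lands in $C\cap R$. The only difference is cosmetic — you use the test set $\{x : x\cap\kappa\in D\}$, which forces $x\cap\kappa$ to be literally an ordinal that is a limit of points of $D$, whereas the paper's $T_C$ only pins $y\cap\kappa$ between elements of $C$ and then argues that $C\cap\mu$ is unbounded in $\mu$; both routes close the argument via closedness of $C$.
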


\begin{proof} 
Suppose that  $\pk{A}$ is $1$-stationary in $\pk{A}$. We will prove that $R:=\{ \mu < \kappa : \mu \text{ is a regular cardinal} \}$ is stationary in $\kappa$. Let $C$ be a club subset of $\kappa$. Since $\kappa$ is limit cardinal, we may assume that all elements of $C$ are cardinals. Consider the  set $T_C:= \{ y \in \pk{A} : \exists \alpha \in C  \text{ such that }  $ $  y \cap \kappa \subsetneq \alpha \leq |y|\}$. \\

\begin{claim}
$T_C$  is cofinal in $\pk{A}$.
\end{claim}
\begin{claimproof}
Suppose  $y \in \pk{A}$ and let $\alpha \in C$ be an infinite cardinal such that $y \cap \kappa \subsetneq \alpha$. Consider $ \tilde{\alpha}:=\{ \delta \setminus \{0\} : \delta \in \alpha\}$, clearly, $\tilde{\alpha} \cap \kappa = \{\varnothing \}$. Now, let $z:= y \cup \tilde{\alpha} \supseteq y$. Then $z \cap \kappa =(y  \cup \tilde{\alpha}) \cap \kappa = (y \cap \kappa)  \cup ( \tilde{\alpha} \cap \kappa) = (y \cap \kappa)  \cup \{ \varnothing \}  \subsetneq \alpha$. Moreover, $\alpha = |\alpha | = | \tilde{\alpha}| \leq |y \cup \tilde{\alpha}| =|z| $, whence $z \in T_C$. Hence, for every $y\in \pk{A}$, there is $z \in T_C$ such that $y \subseteq z$. \end{claimproof}\vspace{2mm}

Hence, by  $1$-stationary of $\pk{A}$, there is $x \in \pk{A}$ such that  $\mu := |x \cap \kappa|$ is a regular limit cardinal ($\mu \in R$) and $T_C\cap \pkld{\mu}{x}$ is cofinal in $\pkld{\mu}{x}$. \vspace{2mm} 

\begin{claim}
$C \cap \mu$ is unbounded in $\mu$. 
\end{claim}
\begin{claimproof}
 Let $\gamma < \mu$. Then $\gamma \subsetneq \mu= |x \cap \kappa|  \subseteq x \cap \kappa  \subseteq x$. Since $\mu$ is a regular cardinal, we have  $|\gamma|< \mu$. Therfore $\gamma \in \pkld{\mu}{x}$.  Now, because $T_C$  is cofinal in $\pk{A}$, there exists $y \in T_C \cap \pkld{\mu}{x}$ such that $\gamma \subseteq y$.  As $y \in T_C$, there is some $\alpha \in C$ such that $y \cap \kappa \subsetneq \alpha \leq |y|$.  In summary we have  $\gamma \subseteq y \cap \kappa \subsetneq \alpha \leq |y| < \mu$. This is, for $\gamma < \mu$, $\alpha $ is such that $\alpha \in C \cap \mu$ and $\gamma < \alpha< \mu$. 
\end{claimproof}\vspace{2mm}

Since $C$ is closed, from the previous claim we conclude  that $\mu \in C$. Thus, $\mu \in C \cap R$ and so $R$ is stationary in $\kappa$. 
\end{proof} \vspace{1mm} 

\begin{corollary}\label{s1iwm} (\cite{S1}) 
Let $\kappa$ be a regular limit cardinal. If $\pk{A}$ is $1$-stationary (BFS) in $\pk{A}$, then $\kappa$ is weakly Mahlo.
\end{corollary}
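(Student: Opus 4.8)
The plan is to rerun the proof of Proposition~\ref{siwm} almost verbatim, the only change being that the witness $x$ now comes from the (BFS) notion of $1$-stationarity. So first I would unwind the hypothesis: $\pk{A}$ being $1$-stationary (BFS) means that for every $T\subseteq\pk{A}$ cofinal in $\pk{A}$ there is $x\in\pk{A}$ such that $\mu := x\cap\kappa$ is a regular cardinal and $T\cap\pkld{\mu}{x}$ is cofinal in $\pkld{\mu}{x}$; in particular $\mu$ is a cardinal, so $|x\cap\kappa| = x\cap\kappa = \mu$.

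To show $\kappa$ is weakly Mahlo it suffices to show that $R := \{\mu<\kappa : \mu \text{ is a regular cardinal}\}$ is stationary in $\kappa$. Fix a club $C\subseteq\kappa$; since $\kappa$ is a limit cardinal we may assume every element of $C$ is an infinite cardinal, and as in the proof of Proposition~\ref{siwm} we form $T_C := \{y\in\pk{A} : \exists\alpha\in C\ (y\cap\kappa\subsetneq\alpha\leq|y|)\}$. The first Claim in that proof shows $T_C$ is cofinal in $\pk{A}$, and its verification uses only the structure of $T_C$ together with the fact that the elements of $C$ are infinite cardinals, not any form of $1$-stationarity.

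Now applying $1$-stationarity (BFS) to $T_C$ yields some $x\in\pk{A}$ with $\mu := x\cap\kappa$ a regular cardinal and $T_C\cap\pkld{\mu}{x}$ cofinal in $\pkld{\mu}{x}$. At this point the second Claim in the proof of Proposition~\ref{siwm}, namely that $C\cap\mu$ is unbounded in $\mu$, goes through word for word: its verification only uses that $\mu$ is a regular cardinal (to infer $|\gamma|<\mu$ from $\gamma<\mu$) and that $T_C\cap\pkld{\mu}{x}$ is cofinal in $\pkld{\mu}{x}$, and it never invokes limithood of $\mu$. Since $C$ is closed and $\mu$ is a limit point of $C$ lying in $\kappa$, we get $\mu\in C$; and $\mu\in R$ by the choice of $x$. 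Hence $\mu\in C\cap R$, so $R$ is stationary in $\kappa$.

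There is essentially no obstacle: the combinatorial work is all done in Proposition~\ref{siwm}. The only thing to verify is that weakening ``$|x\cap\kappa|$ is a regular limit cardinal'' to ``$x\cap\kappa$ is a regular cardinal'' damages neither Claim, and it does not—the first Claim is indifferent to the notion of $1$-stationarity used, and the second Claim never uses that $\mu$ is a \emph{limit} cardinal, while the identity $|x\cap\kappa| = x\cap\kappa$ holds automatically once $x\cap\kappa$ is known to be a cardinal.
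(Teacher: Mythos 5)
Your proof is correct and follows exactly the route the paper takes: it reruns the argument of Proposition~\ref{siwm}, observing that the target set $R$ only requires $\mu$ to be a regular cardinal (not a regular limit cardinal), which the (BFS) notion of $1$-stationarity supplies, and that neither Claim in that proof uses limithood of $\mu$. No gaps.
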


\begin{proof} 
Since we are assuming $\kappa$ to be a limit cardinal, we may proceed in analogy to the proof of Proposition  \ref{siwm}. Notice that, the definition of weakly Mahlo we used in the proof of  \ref{siwm}, used $R:=\{ \mu < \kappa : \mu \text{ is a regular cardinal} \}$. So that, to ensure $\mu \in R$, we only need $\mu$ to be a regular cardinal. And we can derive this from the definition of $1$-stationarity (BSF) of $\pk{A}$.
\end{proof}   \vspace{1mm} 

In the ordinal case, having uncountable cofinality was not only a necessary but also a sufficient condition for an ordinal to be $1$-stationary in itself. Consequently, when seeking an analogy in $\pk{A}$, a question arises: May being $\kappa$ weakly Mahlo be not only a necessary condition but also a sufficient condition for $\pk{A}$ to be $1$-stationary in $\pk{A}$? \vspace{1mm}

\begin{prop}\label{ct} 
If $\kappa$ is weakly Mahlo, then  $\pk{A}$ is $1$-stationary in $\pk{A}$.
\end{prop}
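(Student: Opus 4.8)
The plan is to turn this two‑cardinal statement into a reflection problem about $\kappa$ by running a single cofinal chain through $T$. Fix a $T\subseteq\pk{A}$ that is $0$‑stationary in $\pk{A}$, i.e. $\subseteq$‑cofinal; I must produce $x\in\pk{A}$ with $\mu:=|x\cap\kappa|$ a regular limit cardinal and with $T\cap\pkld{\mu}{x}$ cofinal in $\pkld{\mu}{x}$. The first step is to build, using \emph{only} the cofinality of $T$, a continuous $\subseteq$‑increasing chain $\langle x_\alpha:\alpha<\kappa\rangle$ of elements of $\pk{A}$: set $x_0:=\varnothing$, take unions at limit stages, and at successor stages choose $x_{\alpha+1}\in T$ with $x_\alpha\cup\{\alpha\}\subseteq x_{\alpha+1}$, which is possible since $x_\alpha\cup\{\alpha\}\in\pk{A}$ (here $\alpha\in\kappa\subseteq A$) and $T$ is cofinal. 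Regularity of $\kappa$ keeps every $x_\alpha$ in $\pk{A}$, and the crucial bookkeeping fact is that $\alpha\in x_{\alpha+1}$, so for every limit $\mu<\kappa$ one has $\mu\subseteq x_\mu\cap\kappa$.

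Next I would introduce the two functions $\gamma_\alpha:=\sup(x_\alpha\cap\kappa)<\kappa$ and $\rho_\alpha:=|x_\alpha|<\kappa$, and form the set $E$ of limit ordinals $\mu<\kappa$ such that $\gamma_\alpha<\mu$ and $\rho_\alpha<\mu$ for all $\alpha<\mu$; this is a club in $\kappa$ by a routine closing‑off argument (closure points of the map $\alpha\mapsto\max(\gamma_\alpha,\rho_\alpha)$). The only use of the hypothesis comes now: since $\kappa$ is weakly Mahlo, $R:=\{\mu<\kappa:\mu \text{ is a regular limit cardinal}\}$ is stationary, so I may fix $\mu\in E\cap R$ and put $x:=x_\mu$. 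For this $\mu$, combining $\mu\subseteq x_\mu\cap\kappa$ with $x_\mu\cap\kappa\subseteq\mu$ (the latter from $\mu\in E$, since every ordinal in $x_\alpha\cap\kappa$ is $\le\gamma_\alpha<\mu$) gives $x_\mu\cap\kappa=\mu$; hence $|x\cap\kappa|=\mu$ is a regular limit cardinal, and $|x_\mu|\le\mu<\kappa$ so $x\in\pk{A}$. This secures clause (i) of Definition \ref{nst}.

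It then remains to check that $T\cap\pkld{\mu}{x}$ is cofinal in $\pkld{\mu}{x}$. Given $z\subseteq x_\mu$ with $|z|<\mu$, regularity of $\mu$ yields $\alpha<\mu$ with $z\subseteq x_\alpha$; then $x_{\alpha+1}$ belongs to $T$, contains $z$, is contained in $x_\mu=x$, and has cardinality $\rho_{\alpha+1}<\mu$ because $\alpha+1<\mu\in E$ — so $x_{\alpha+1}\in T\cap\pkld{\mu}{x}$ is the desired extension of $z$, and the proof is complete.

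The conceptual heart — and the step at which one is most likely to go wrong — is the design of the chain. The naive idea, closing $x$ off under a Skolem function $z\mapsto f(z)\in T$ over \emph{all} subsets $z\subseteq x$, fails: $x$ would have to absorb roughly $2^{<\mu}$ witnesses, which can exceed $\kappa$ since a weakly Mahlo $\kappa$ need not be a strong limit. The remedy is to make every $x_{\alpha+1}$ itself lie in $T$, so that at a regular reflection point $\mu$ a single chain element $x_{\alpha+1}$ simultaneously witnesses cofinality for \emph{all} $z\subseteq x_\alpha$, and only $\mu$ many witnesses are ever needed. The one genuine technical subtlety left is then ensuring, through the club $E$ (specifically the condition $\rho_\alpha<\mu$), that all chain elements below stage $\mu$ actually have size $<\mu$ and hence live inside $\pkld{\mu}{x}$; everything else is bookkeeping with clubs and regular cardinals.
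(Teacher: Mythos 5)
Your proof is correct and follows essentially the same strategy as the paper's: run a $\kappa$-length increasing chain whose successor stages lie in $T$, use weak Mahloness to find a regular limit cardinal $\mu$ at which to cut the chain, and let the chain elements themselves witness the cofinality of $T\cap\pkld{\mu}{x}$. The only difference is bookkeeping — you use a continuous chain together with the closure club $E$ and take $x:=x_\mu$ directly, whereas the paper keeps every chain element (including limit stages) inside $T$ and instead extracts a cofinal subsequence of length $\mu$ via the set of achieved cardinalities — and both variants are sound.
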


\begin{proof} 
Suppose that $\kappa$ is weakly Mahlo. Then the set $R:=\{ \mu < \kappa : \mu$ is a regular limit cardinal$\}$ is stationary in $\kappa$. Let  $T \subseteq \pk{A}$  be cofinal in $\pk{A}$. Consider a  transfinite sequence of elements of $T$ for ordinals  $\alpha < \kappa$, constructed as follows \vspace{2mm}

\quad $x_0 \in T$, 

\quad $x_{\alpha +1} \in T$ is such that $x_{\alpha + 1} \supsetneq x_{\alpha}\cup  \alpha$,

\quad $x_{\alpha} \in T $ is such that  $x_{\alpha} \supsetneq \bigcup_{\gamma <  \alpha} x_{\gamma}$, for $\alpha < \kappa $ limit. \vspace{2mm}

Any such sequence is well defined, as both the successor and limit step can be performed. More specifically, since $T$ is cofinal and $\kappa$ is a regular limit cardinal, $|x_{\alpha}|$, $ | \alpha|< \kappa$ implies $x_{\alpha} \cup \alpha \in \pk{A}$ and $\bigcup_{\gamma <  \alpha} x_{\gamma}  \in \pk{A}$. Notice also that any such sequence  $\langle x_{\alpha} : \alpha < \kappa\rangle \subseteq T$ is strictly increasing. Now, fix any such $\langle x_{\alpha} : \alpha < \kappa\rangle \subseteq T$, and consider  the set $U:=\{ \alpha < \kappa : \exists \beta < \kappa \text{ s.t. } |x_{\beta}|= \alpha \}$.\vspace{2mm}

\begin{claim}
$U$  is unbounded in $\kappa$.
\end{claim}
\begin{claimproof}
Let $\delta < \kappa$. As $\kappa$ is a regular limit cardinal,  we have $|\delta|^+< \kappa$. Then  $x_{|\delta|^+ +1} \supseteq x_{|\delta|^+} \cup |\delta|^+$. Notice that  $ \delta < |\delta|^+  \leq |x_{|\delta|^++1} |  < \kappa$. Then for $\alpha:=|x_{|\delta|^++1} | < \kappa$, there exists $\beta:= |\delta|^++1< \kappa$ such that  $|x_{\beta}|=\alpha > \delta$. Hence, $\alpha \in U$ and $\delta < \alpha <\kappa.$
\end{claimproof}\vspace{2mm}

Since $R$ is stationary in $\kappa$, there is $\mu \in R$ such that $U \cap \mu$ is unbounded in $\mu$. Let us construct a subsequence $\langle x_{\beta_{\alpha}} : \alpha < \mu \rangle$ of $\langle x_{\alpha} : \alpha < \kappa \rangle$, as follows: Pick a sequence $\langle \delta_{\alpha} : \alpha < \mu \rangle$ of elements from $U \cap \mu$, cofinal in $\mu$. For each $\delta_{\alpha}$ with  $\alpha < \mu$, choose $\beta_{\alpha}< \kappa$ such that $ |x_{\beta_{\alpha}}|= \delta_{\alpha}$. So defined,  $\langle \beta_{\alpha} : \alpha < \mu \rangle$ is strictly increasing. For if $\alpha<\alpha'<\mu$, then $\delta_{\alpha} =|x_{\beta_{\alpha}}|<|x_{\beta_{\alpha'}}|=\delta_{\alpha'}$, because  $\langle \delta_{\alpha} : \alpha < \mu \rangle$ is strictly increasing. And $|x_{\beta_{\alpha}}|<|x_{\beta_{\alpha'}}|$ implies  $\beta_{\alpha}<\beta_{\alpha'}$. \vspace{2mm}

Define $x:= \bigcup_{\alpha <\mu} x_{\beta_{\alpha}} $. Then $x$ is the union of $\mu$ many distinct sets, each of cardinality less than $\mu$. In particular, we have $|x|= \mu$. To conclude the proof, we will show that $x$ satisfies conditions of Definition \ref{nsst} (2).\vspace{2mm}

(i) $\mu = |x \cap \kappa|$: Since  $\langle \beta_{\alpha} : \alpha < \mu \rangle$ is strictly increasing,  $ \mu \leq\bigcup_{\alpha < \mu} \beta_{\alpha}$. Then $\mu \subseteq \bigcup_{\alpha < \mu} \beta_{\delta}\subseteq \bigcup_{\alpha < \mu} x_{\beta_{\delta}}=x$ and so $\mu \subseteq x \cap \kappa$. Hence, $\mu \leq |x \cap \kappa| \leq |x| = \mu$.\\

(ii) $T \cap \pkld{\mu}{x}$ is cofinal in $ \pkld{\mu}{x}$: Let $y \in  \pkld{\mu}{x}$. Then $y \subseteq  \bigcup_{\alpha <\mu} x_{\beta_{\alpha}}$ and $|y| < \mu$. As $|x | = \mu$ is regular,  $y $ is not cofinal in $x$. Then $y \subseteq x_{\beta_{\alpha}}$  for some $\alpha < \mu$. But $x_{\beta_{\alpha}} \subseteq \bigcup_{\alpha <\mu} x_{\beta_{\alpha}}=x$ and $|x_{\beta_{\alpha}} | < \mu $. This is, there is $x_{\beta_{\alpha}} \in T \cap \pkld{\mu}{x}$   such that $y \subseteq x_{\beta_{\alpha}}.$ 
\end{proof}\vspace{2mm} 

Notice that in the previous proof, the sequence $\langle x_{{\alpha}} : \alpha < \mu \rangle$ we chose was actually any sequence satisfying $x_0 \in T$, $x_{\alpha} \cup \alpha \subseteq x_{\alpha+1}$ for $\alpha < \kappa$, and  $x_{\alpha} \supsetneq \bigcup_{\gamma <  \alpha} x_{\gamma}$ for $\alpha < \kappa $ limit. Thus, we actually developed a method for constructing an $x\in \pk{A}$ witnessing the reflection of an unbounded set $T$, based on a given sequence in $T$.  Moreover, if in the proof we take some $E \subseteq R$ stationary in $\kappa$ instead of taking $R$, in the first line of the fourth paragraph we would obtain some $\mu \in E$ such that $U \cap \mu$ is unbounded in $\mu$. Since $\mu$ will still be in $R$, we can then proceed with the rest of the proof just exactly as in \ref{ct}.\vspace{2mm}

\begin{corollary}\label{method} 
Let $\kappa$ be weakly Mahlo, $E\subseteq R$ stationary in $\kappa$ and $T$ be cofinal in $\pk{A}$. Let $\langle x_{\alpha} : \alpha < \kappa\rangle$ be a nonempty sequence of elements of $T$ such that $x_{\alpha} \cup \alpha \subsetneq x_{\alpha+1}$ for $\alpha < \kappa$, and  $ \bigcup_{\gamma <  \alpha} x_{\gamma} \subsetneq x_{\alpha} $ for $\alpha < \kappa $ limit. Then, there  an  strictly increasing subsequence $\langle x_{\beta_{\alpha}} : \alpha < \mu \rangle \subseteq \langle x_{\alpha} : \alpha < \kappa \rangle $ such that  $x:=  \bigcup_{\alpha <\mu} x_{\beta_{\alpha}}$ satisfies, $\mu := |x \cap \kappa|=|x|$ is a regular limit cardinal, $\mu \in E$, and  $T \cap \pkld{\mu}{x}$ is cofinal in $ \pkld{\mu}{x}$. This is, $x \in d_0(T)$, $\mu \in E$ and $|x|=|x\cap \kappa|$. $\square$ 
\end{corollary}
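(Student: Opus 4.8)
The plan is to observe that the proof of Proposition~\ref{ct} used nothing about the recursively built sequence there beyond the three growth conditions now listed among the hypotheses, and that the stationarity of $R$ entered only at a single point, to produce a reflecting $\mu$; so the whole argument adapts verbatim once $R$ is replaced by the given stationary $E\subseteq R$. Accordingly, I would simply re-run the proof of \ref{ct}, flagging the two (purely cosmetic) changes.

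In more detail: first, from $x_{\alpha}\cup\alpha\subsetneq x_{\alpha+1}$ and $\bigcup_{\gamma<\alpha}x_{\gamma}\subsetneq x_{\alpha}$ at limit $\alpha$ one obtains, by a routine transfinite induction, that $\gamma\le\gamma'$ implies $x_{\gamma}\subseteq x_{\gamma'}$, and in particular that $\langle x_{\alpha}:\alpha<\kappa\rangle$ is strictly increasing, exactly as in \ref{ct}. Then I would set $U:=\{\alpha<\kappa:\exists\beta<\kappa\text{ with }|x_{\beta}|=\alpha\}$ and verify that $U$ is unbounded in $\kappa$: given $\delta<\kappa$, since $\kappa$ is a regular limit cardinal $|\delta|^{+}<\kappa$, and $x_{|\delta|^{+}+1}\supseteq x_{|\delta|^{+}}\cup|\delta|^{+}$ yields $\delta<|\delta|^{+}\le|x_{|\delta|^{+}+1}|<\kappa$ --- this is the first Claim of \ref{ct}. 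Here is the only place where the switch from $R$ to $E$ matters: by stationarity of $E$ in $\kappa$, I pick $\mu\in E$ with $U\cap\mu$ unbounded in $\mu$; since $E\subseteq R$, $\mu$ is a regular limit cardinal.

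From here the argument is that of \ref{ct}. I would choose $\langle\delta_{\alpha}:\alpha<\mu\rangle$ cofinal in $\mu$ inside $U\cap\mu$, pick $\beta_{\alpha}<\kappa$ with $|x_{\beta_{\alpha}}|=\delta_{\alpha}$, and note that $\langle\beta_{\alpha}:\alpha<\mu\rangle$ is strictly increasing since $\langle\delta_{\alpha}\rangle$ is and $\gamma\mapsto|x_{\gamma}|$ is nondecreasing. Then $x:=\bigcup_{\alpha<\mu}x_{\beta_{\alpha}}$ is a union of $\mu$ sets of size $<\mu$, so $|x|\le\mu$, while $\mu\subseteq x$ (the $\beta_{\alpha}$ being cofinal in $\mu$, using monotonicity together with $x_{\gamma+1}\supseteq\gamma$, as in \ref{ct}) gives $\mu\le|x\cap\kappa|\le|x|\le\mu$; hence $\mu=|x\cap\kappa|=|x|$ is a regular limit cardinal with $\mu\in E$. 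Finally, for cofinality of $T\cap\pkld{\mu}{x}$ in $\pkld{\mu}{x}$: if $y\in\pkld{\mu}{x}$ then $|y|<\mu$ and $y\subseteq x$, so by regularity of $\mu$ the set $y$ is not cofinal in the increasing union, hence $y\subseteq x_{\beta_{\alpha}}$ for some $\alpha<\mu$, and $x_{\beta_{\alpha}}\in T$ with $|x_{\beta_{\alpha}}|=\delta_{\alpha}<\mu$ and $x_{\beta_{\alpha}}\subseteq x$ exhibits an element of $T\cap\pkld{\mu}{x}$ above $y$. This gives $x\in d_0(T)$, $\mu\in E$, and $|x|=|x\cap\kappa|$, as claimed.

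There is no real obstacle here: the statement just black-boxes the construction inside the proof of Proposition~\ref{ct}. The one point needing a little care is that $\langle x_{\alpha}:\alpha<\kappa\rangle$ is now an \emph{arbitrary} sequence meeting the three displayed conditions rather than the concrete recursive one, so one must re-derive the $\subseteq$-monotonicity and strict increase (the transfinite induction in the second step above); everything after the extraction of $\mu$ is word-for-word as in \ref{ct}.
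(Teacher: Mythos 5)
Your proposal is correct and matches the paper's own justification, which likewise just observes that the proof of Proposition~\ref{ct} uses only the three growth conditions on the sequence and that replacing $R$ by a stationary $E\subseteq R$ changes nothing except where the reflecting $\mu$ is found. Your extra remark about re-deriving monotonicity for an arbitrary sequence satisfying the hypotheses is a reasonable point of care but poses no difficulty.
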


From Proposition \ref{siwm} and Proposition \ref{ct} we get a complete characterisation of $1$-stationarity for $\pk{A}$, and also a complete characterisation  of weakly Mahloness in terms of higher stationary sets on $\pk{A}$.

\begin{theorem}\label{col1}  
$\pk{A}$ is $1$-stationary in $\pk{A}$ if and only if $\kappa$ is weakly Mahlo. $\square$\vspace{1mm}
\end{theorem}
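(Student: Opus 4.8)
The plan is to obtain this biconditional as an immediate consequence of the two preceding propositions. The left-to-right implication is exactly Proposition \ref{siwm}: if $\pk{A}$ is $1$-stationary in itself then the set $R = \{\mu < \kappa : \mu \text{ is a regular cardinal}\}$ is stationary in $\kappa$, which is precisely one of the equivalent formulations of $\kappa$ being weakly Mahlo recorded above. The right-to-left implication is exactly Proposition \ref{ct}: if $\kappa$ is weakly Mahlo then $\pk{A}$ is $1$-stationary in $\pk{A}$. Combining the two gives the theorem, and strictly speaking nothing more is needed.

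If one wanted to spell out the underlying arguments rather than cite them, the two halves are quite asymmetric. The forward direction is the lighter one: from a club $C \subseteq \kappa$ of cardinals one builds $T_C = \{y \in \pk{A} : \exists \alpha \in C\ (y \cap \kappa \subsetneq \alpha \le |y|)\}$, verifies it is cofinal in $\pk{A}$ by inflating an arbitrary $y$ with a set-theoretic copy of some $\alpha \in C$ chosen to avoid $\kappa$ (so the trace on $\kappa$ is undisturbed while the cardinality jumps past $\alpha$), applies $1$-stationarity to get $x$ with $\mu := |x \cap \kappa|$ a regular limit cardinal and $T_C \cap \pkld{\mu}{x}$ cofinal in $\pkld{\mu}{x}$, and then checks that this cofinality forces $C \cap \mu$ to be unbounded in $\mu$; closure of $C$ yields $\mu \in C \cap R$.

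The backward direction is where the real content lies, and it is the step I would flag as the main obstacle. Given a cofinal $T \subseteq \pk{A}$, one builds a strictly $\subseteq$-increasing $\kappa$-chain $\langle x_\alpha : \alpha < \kappa\rangle$ through $T$ that absorbs every ordinal below $\kappa$, observes that the set $U$ of cardinalities $|x_\beta|$ is unbounded in $\kappa$, and uses stationarity of the regular-limit-cardinal set to find $\mu$ with $U \cap \mu$ unbounded in $\mu$. The delicate point is to pick a cofinal subsequence whose union $x$ satisfies $|x| = \mu$ \emph{and} $|x \cap \kappa| = \mu$ simultaneously — this is exactly the role of refining the definition so that $|x \cap \kappa|$, rather than $x \cap \kappa$ itself, is required to be a regular limit cardinal — and then to verify $T \cap \pkld{\mu}{x}$ is cofinal in $\pkld{\mu}{x}$, which uses regularity of $\mu$ to prevent any $y \in \pkld{\mu}{x}$ from being cofinal in the chain.

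Since Propositions \ref{siwm} and \ref{ct} already carry both of these arguments out in full, the proof of the theorem itself reduces to the one-line combination described in the first paragraph. $\square$
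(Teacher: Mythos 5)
Your proposal is correct and matches the paper exactly: the theorem is obtained as the immediate conjunction of Proposition \ref{siwm} (the forward direction) and Proposition \ref{ct} (the backward direction), which is precisely how the paper derives it, and your summaries of the two underlying arguments faithfully reproduce the proofs of those propositions. Nothing further is needed.
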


Notice that, in the proof of Proposition \ref{ct} we can in fact  start the sequence $\langle x_{\alpha} : \alpha < \kappa\rangle$ with $x_0\supseteq y$ for any given $y\in \pk{A}$. Obtaining  $y \subseteq x$, for $x$ such that  $\mu:=|x \cap \kappa| $ is a regular limit cardinal and $T \cap \pkld{\mu}{x}$ is cofinal in $\pkld{\mu}{x}$. Therefore, if $\kappa$ is weakly Mahlo and $T \subseteq \pk{A}$ is cofinal in $\pk{A}$, the set  $d_0(T)= $ $\{ x\in \pk{A} : \mu:=|x \cap \kappa| $ is a regular limit cardinal and $T \cap \pkld{\mu}{x}$ is cofinal in $\pkld{\mu}{x}\}$ is  cofinal in $\pk{A}$. \vspace{1mm}

\begin{prop}\label{d0cic} 
Let $\kappa$ be weakly Mahlo. If $C$ be a club in $\pk{A}$, then $d_0^s(C)=d_0(C) \subseteq C$.
\end{prop}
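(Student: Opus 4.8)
The equality $d_0^s(C)=d_0(C)$ holds by definition for every subset of $\pk{A}$, as already observed (for $n=0$ Definitions \ref{nst} and \ref{nsst} coincide). So the real content is the inclusion $d_0(C)\subseteq C$, and for this the hypothesis "$\kappa$ weakly Mahlo" will in fact not be needed. The plan is: fix $x\in d_0(C)$, so that $\mu:=|x\cap\kappa|$ is a regular limit cardinal and $C\cap\pkld{\mu}{x}$ is cofinal in $\pkld{\mu}{x}$; I then aim to produce a directed family $D\subseteq C$ with $|D|<\kappa$ and $\bigcup D=x$, and conclude $x\in C$ from closedness of $C$ (using the characterisation of closedness via directed sets of size $<\kappa$ recalled in Section 2).

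If $|x|<\mu$ then $x\in\pkld{\mu}{x}$, and cofinality of $C\cap\pkld{\mu}{x}$ gives $z\in C\cap\pkld{\mu}{x}$ with $x\subseteq z$; since also $z\subseteq x$ this yields $x=z\in C$. So I may assume $|x|\ge\mu$, in particular $|x|$ is infinite. Since $\mu$ is regular and infinite, $\pkld{\mu}{x}$ contains every singleton $\{a\}$ with $a\in x$ and is closed under finite unions. Using cofinality of $C\cap\pkld{\mu}{x}$, for each $w\in\pkld{\mu}{x}$ I fix some $z_w\in C\cap\pkld{\mu}{x}$ with $w\subseteq z_w$. I then build $D=\bigcup_{n<\omega}D_n$ by setting $D_0:=\{z_{\{a\}}:a\in x\}$ and $D_{n+1}:=D_n\cup\{z_w:w\text{ is a finite union of members of }D_n\}$. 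A routine induction gives $|D_n|\le|x|$ for all $n$, hence $|D|\le|x|\cdot\omega=|x|<\kappa$. Every member of $D$ is a subset of $x$ and each $z_{\{a\}}$ contains $a$, so $\bigcup D=x$; and $D$ is directed, since any two of its members lie in a common $D_n$, and their union, being a finite union of members of $D_n$, is contained in some $z_w\in D_{n+1}\subseteq D$. As $C$ is closed and $D\subseteq C$ is directed of size $<\kappa$, we conclude $\bigcup D=x\in C$.

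The one step to get right is the construction of $D$: it is tempting to take $D:=C\cap\pkld{\mu}{x}$ outright, but that family can have cardinality $\ge\kappa$ when $\kappa$ is not a strong limit, and moreover $C$ itself need not be closed under finite unions, so the naive closure of a small cofinal subfamily under finite unions need not remain inside $C$. Threading the choices $z_w$ through an $\omega$-length iteration is precisely what keeps $D$ small and directed while keeping it inside $C$; everything else is bookkeeping.
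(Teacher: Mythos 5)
Your proof is correct and follows essentially the same route as the paper's: given $x\in d_0(C)$, build an $\omega$-length iteration of choices from $C\cap\pkld{\mu}{x}$ to obtain a directed subfamily of $C$ of size $<\kappa$ whose union is $x$, then invoke the characterisation of closedness via small directed sets. The only cosmetic differences are that the paper closes under dominating pairs rather than finite unions, and you correctly note (as the paper's argument also implicitly shows) that weak Mahloness is not actually used for the inclusion.
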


\begin{proof} 
Let $x \in d_0(C) $, then $\mu := |x \cap \kappa|$ is a regular limit cardinal and $C\cap \pkld{\mu}{x}$ is cofinal in $\pkld{\mu}{x}$. Then for each $a \in x$, there is some $y_a \in C\cap \pkld{\mu}{x}$ such that $\{a\} \subseteq y_a$. Similarly, for each pair  $w,w'$ of distinct elements of $C$, there is an element $y \in C\cap \pkld{\mu}{x}$ such that $w,w' \subseteq y$. Thus, for every $z \in [C]^2=\{ z \subseteq C : |z|=2\}$, there is some $y \in C$ such that if $w \in z$ then $w \subseteq y$. For each, $z \in  [C]^2$ we will choose one such $y$ and denote this choice by $y_z$. Now,  by induction on $n<\omega$, construct  the following  sequence of subsets of $\pk{A}$,\vspace{2mm}

\quad $ T_0:= \{ y_a : a \in x\}$,

\quad $T_1:= T_0 \cup \{ y_z  :  z \in [T_0]^2 \}$,

\quad $T_n:= T_{n-1} \cup \{y_z : z \in [T_{n-1}]^2\}$.\vspace{2mm}

Define $T:= \bigcup_{n < \omega} T_n \subseteq C$. To prove that $x \in C$, we will first show that $T$ is a directed set of cardinality less than $\kappa$.\vspace{2mm}

\begin{claim}
$T$ is directed.
\end{claim}
\begin{claimproof}
Let $y_w,y_{w'} \in T$, then there are $n,m < \omega $ such that $y_w \in T_n$ and $y_{w'} \in T_m$. If we set $k=m+n+1$ we get $T_n, T_m \subseteq T_{k}$, and so $y_w,y_{w'} \in T_{k}$. Now, for $z := \{y_w,y_{w'}\} \in  [T_{k}]^2$ there is  $y_z \in  \{y_z : z \in [T_{k}]^2\} \subseteq T_{k+1}$  such that  $y_w, y_{w'} \subseteq y_{z}$. Hence, there is $y_z \in T$ such that $y_w \cup y_{w'} \subseteq y_{z}$. 
\end{claimproof}\vspace{2mm}

\begin{claim}
$|T|<\kappa$.
\end{claim}
\begin{claimproof}
First, let us see that  $|T_n|< \kappa$ for all $n< \omega$. It is clear that  $|T_0|= |x|< \kappa$, moreover, $|T_1|=\max\{ |T_0| , |\{ y_z  : z \in [T_0]^2 \}| \}= \max\{ |x| , |[T_0]^2| \} \leq \max\{ |x| , |T_0|, \omega \} < \kappa$. If $n >2$, by induction, asume  $|T_{n-1}|< \kappa$. Then $$|T_n|=\max\{ |T_{n-1}| , |\{ y_z  : z \in [T_{n-1}]^2 \}| \}= \max\{ |T_{n-1}| , |[T_{n-1}]^2| \} <  \kappa.$$ Finally, since  $|T_n|< \kappa$  for all $n < \omega$, $\kappa$ is regular uncountable and $|T|=\sup \{ |T_n| : n < \omega\} $, we have $|T| < \kappa$. 
\end{claimproof}\vspace{2mm}\\

 Now, as $T$ is a directed subset of $C$ of cardinality less than $\kappa$, and $C$ is a closed subset of $\pk{A}$, we have $\bigcup T \in C$. Moreover, observe that $\bigcup T=x $. For if $a \in \bigcup T$, then $a \in y_z$ for some $y_z \in T_n$ and $n< \omega$. But  $y_z \in  \pkld{\mu}{x}$,  then $a \in y_z \subseteq x$. Conversely, if $a \in x$,  there is $y_a \in T_0 \subseteq T$ such that $\{a\} \subseteq y_a $, this is, $a \in \bigcup T$. Hence, $x =\bigcup T  \in  C$. 
 \end{proof}\vspace{1mm} 

\begin{lemma}\label{lem0} 
For every $X,Y\subseteq \pk{A}$ and every $m\leq n$,
\begin{enumerate}
\item[(1)]  $d_n(X \cap Y)\subseteq d_n(X) \cap d_n(Y)$.  			
\item[(2)]  $d_n(X)\subseteq d_m(X)$. 
\item[(3)]   If $\kappa$ is weakly Mahlo and is $C$ is club in $\pk{A}$, then $d_n(C)\subseteq C$.
\end{enumerate}
\end{lemma}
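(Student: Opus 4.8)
The plan is to prove Lemma \ref{lem0} by reducing each item to a fact already in hand or to a short induction on $n$. Part (1) is simply the ``straightforward fact'' remarked upon in the text, namely that $d_n(X\cap Y)\subseteq d_n(X)\cap d_n(Y)$; one reproves it directly from Definition \ref{nst}(3): if $x\in d_n(X\cap Y)$ then $\mu:=|x\cap\kappa|$ is a regular limit cardinal and $(X\cap Y)\cap\pkld{\mu}{x}=(X\cap\pkld{\mu}{x})\cap(Y\cap\pkld{\mu}{x})$ is $n$-stationary in $\pkld{\mu}{x}$, and since a subset of an $n$-stationary set that contains it is $n$-stationary (``$S\subseteq S'$ and $S$ $n$-stationary implies $S'$ $n$-stationary''), both $X\cap\pkld{\mu}{x}$ and $Y\cap\pkld{\mu}{x}$ are $n$-stationary in $\pkld{\mu}{x}$, i.e. $x\in d_n(X)\cap d_n(Y)$. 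Part (2) is exactly the second assertion of Lemma \ref{nstimst} (with $X$ in place of the dummy set there), so I would just cite it; alternatively it follows from the monotonicity of $n$-stationarity in $n$ proved there.

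For part (3) I would argue by induction on $n$. The base case $n=0$ is precisely Proposition \ref{d0cic}: if $\kappa$ is weakly Mahlo and $C$ is club in $\pk{A}$, then $d_0(C)\subseteq C$. For the inductive step, suppose the statement holds for $n$ and let $C$ be club in $\pk{A}$. Since $C$ is in particular cofinal in $\pk{A}$ and a club is certainly $m$-stationary for every $m$ (a club is stationary, and one checks by an easy induction, or directly, that clubs are $n$-stationary for all $n$ when $\kappa$ is weakly Mahlo) — but more economically, I only need that $d_{n+1}(C)\subseteq d_n(C)$, which is part (2), and then $d_n(C)\subseteq C$ by the inductive hypothesis; hence $d_{n+1}(C)\subseteq C$. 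So in fact part (3) for all $n$ follows immediately from the conjunction of part (2) and the base case Proposition \ref{d0cic}, without a genuine induction: $d_n(C)\subseteq d_0(C)\subseteq C$.

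Concretely, the write-up would be: (1) unpack the definition and use upward closure of $n$-stationarity under superset; (2) invoke Lemma \ref{nstimst}; (3) combine (2) with Proposition \ref{d0cic} via $d_n(C)\subseteq d_0(C)\subseteq C$. The only place requiring any care is making sure the ``superset of an $n$-stationary set is $n$-stationary'' fact is legitimately available — it is stated in the text right before Lemma \ref{nstimst} — and that the reduction $d_n(C)\subseteq d_0(C)$ in part (3) is exactly the $m=0$ instance of part (2), which it is. I do not expect any real obstacle; the lemma is a bookkeeping consolidation of Proposition \ref{d0cic}, Lemma \ref{nstimst}, and the remarks preceding them, and the proof should be only a few lines.

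\begin{proof}
(1) and (2) have already been observed: (2) is the content of the second assertion of Lemma \ref{nstimst}, and (1) follows directly from Definition \ref{nst}(3). Indeed, if $x\in d_n(X\cap Y)$ then $\mu:=|x\cap\kappa|$ is a regular limit cardinal and $(X\cap Y)\cap\pkld{\mu}{x}$ is $n$-stationary in $\pkld{\mu}{x}$. Since $(X\cap Y)\cap\pkld{\mu}{x}\subseteq X\cap\pkld{\mu}{x}$ and $(X\cap Y)\cap\pkld{\mu}{x}\subseteq Y\cap\pkld{\mu}{x}$, and any superset of an $n$-stationary subset of $\pkld{\mu}{x}$ is again $n$-stationary in $\pkld{\mu}{x}$, both $X\cap\pkld{\mu}{x}$ and $Y\cap\pkld{\mu}{x}$ are $n$-stationary in $\pkld{\mu}{x}$; that is, $x\in d_n(X)\cap d_n(Y)$.

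(3) Let $\kappa$ be weakly Mahlo and let $C$ be club in $\pk{A}$. By (2), $d_n(C)\subseteq d_0(C)$, and by Proposition \ref{d0cic}, $d_0(C)\subseteq C$. Hence $d_n(C)\subseteq C$.
\end{proof}
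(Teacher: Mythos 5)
Your proposal is correct and follows essentially the same route as the paper: (1) is unwound from the definition using upward closure of $n$-stationarity under supersets, (2) is cited from Lemma \ref{nstimst}, and (3) reduces to Proposition \ref{d0cic} via part (2). The only cosmetic difference is that the paper phrases (3) as an induction ($d_{n+1}(C)\subseteq d_n(C)\subseteq C$) whereas you collapse it to the direct chain $d_n(C)\subseteq d_0(C)\subseteq C$; these are the same argument.
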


\begin{proof} 
Proof of (1) follows immediate from the definition and (2) is given by Lemma \ref{nstimst}. So let us prove (3) by induction on $n$. Suppose $\kappa$ is weakly Mahlo and  $C$ is club in $\pk{A}$. The case $n=0$ is  Proposition \ref{d0cic}. For $n>0$, if $d_n(C)\subseteq C$, using item (2) we get  that, $d_{n+1}(C) \subseteq d_n(C)\subseteq C$. 
\end{proof}\vspace{1mm} 

\begin{prop}\label{1stist} 
If $S\subseteq \pk{A}$ is $1$-stationary in $\pk{A}$, then $S$ is stationary in $\pk{A}$. 
\end{prop}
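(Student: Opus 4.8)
The plan is to reduce the statement to Proposition \ref{d0cic}. First I would observe that, since $S \subseteq \pk{A}$, the monotonicity remark preceding Lemma \ref{nstimst} (that a $1$-stationary set remains $1$-stationary when enlarged) gives that $\pk{A}$ itself is $1$-stationary in $\pk{A}$; hence, by Theorem \ref{col1}, $\kappa$ is weakly Mahlo. This is the one place where any genuine input is needed, and it is what licenses the use of Proposition \ref{d0cic} below.

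Next, let $C$ be an arbitrary club subset of $\pk{A}$; the goal is to show $S \cap C \neq \varnothing$. Since $C$ is in particular cofinal in $\pk{A}$, i.e. $0$-stationary, the $1$-stationarity of $S$ applied with $m=0$ and $T=C$ yields some $x \in S$ such that $\mu := |x \cap \kappa|$ is a regular limit cardinal and $C \cap \pkld{\mu}{x}$ is cofinal in $\pkld{\mu}{x}$; that is, $x \in S \cap d_0(C)$.

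Finally, since $\kappa$ is weakly Mahlo and $C$ is club, Proposition \ref{d0cic} gives $d_0(C) \subseteq C$ (equivalently, Lemma \ref{lem0}(3) with $n=0$), so $x \in C$. Therefore $x \in S \cap C$, and as $C$ was arbitrary, $S$ is stationary in $\pk{A}$.

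The argument is essentially immediate once Proposition \ref{d0cic} is available; the only step requiring attention is verifying that $\kappa$ is weakly Mahlo so that Proposition \ref{d0cic} applies. (Alternatively, one could bypass Theorem \ref{col1} altogether, since the proof of Proposition \ref{d0cic}---building a directed subset of $C$ of size $<\kappa$ whose union is $x$ and invoking closedness of $C$---uses only that $x \in d_0(C)$ and that $C$ is closed, not the weak Mahloness of $\kappa$.)
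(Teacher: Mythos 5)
Your proof is correct and follows essentially the same route as the paper's: apply $1$-stationarity of $S$ to the cofinal set $C$ to get $x \in S \cap d_0(C)$, then use Proposition \ref{d0cic} to conclude $d_0(C) \subseteq C$. In fact you are slightly more careful than the paper, which invokes Proposition \ref{d0cic} without explicitly verifying its weakly Mahlo hypothesis; your observation that the proof of that proposition never actually uses weak Mahloness is also accurate.
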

 
\begin{proof}  
Suppose that $S$ is $1$-stationary in $\pk{A}$, and let $C$ be a club subset of $\pk{A}$. In particular $C$ is cofinal in $\pk{A}$, then $ \varnothing \neq S \cap d_0(C)$. Hence, using Proposition \ref{d0cic}, we get $ \varnothing \neq S \cap d_0(C) \subseteq S \cap C $. 
\end{proof}\vspace{1mm}

In the ordinal case the equivalence between $1$-stationarity and stationarity holds. Unfortunately, as we will see in the next proposition, this correspondence does not extend to the case of $\pk{A}$. That is, $S\subseteq \pk{A}$ being stationary in $\pk{A}$ does not imply $S$ is $1$-stationary in $\pk{A}$.\vspace{1mm}

\begin{prop}\label{n1stist} 
If $\kappa$ is a regular limit cardinal and $A$ is a set with $\kappa \subseteq A$, then there is a stationary subset $S$ of $\pk{A}$ that is not $1$-stationary in $\pk{A}$. 
\end{prop}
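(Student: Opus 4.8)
The goal is to exhibit a stationary $S\subseteq\pk{A}$ that fails to be $1$-stationary, i.e. $S$ misses $d_0(T)$ for some cofinal $T\subseteq\pk{A}$. The natural candidate for $S$ is the complement of the set that \emph{every} element of $d_0(T)$ must satisfy. Recall from Definition \ref{nsst}(3) that $x\in d_0(T)$ forces $\mu:=|x\cap\kappa|$ to be a regular limit cardinal and $T\cap\pkld{\mu}{x}$ to be cofinal in $\pkld{\mu}{x}$; in particular, by the argument computing $|x|$ in Proposition \ref{ct} (and in Corollary \ref{method}), any $x\in d_0(T)$ with $T$ cofinal satisfies $|x|=|x\cap\kappa|$ — that is, $x$ has the same cardinality as its intersection with $\kappa$. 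So the plan is to set
\[
S:=\{\,x\in\pk{A} : |x|>|x\cap\kappa|\,\},
\]
i.e. the set of $x$ whose size is strictly bigger than $|x\cap\kappa|$. Then no $x\in S$ can lie in $d_0(T)$ for any cofinal $T$, so $S$ is not $1$-stationary; it remains to prove $S$ is stationary.

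\textbf{Stationarity of $S$.} Let $C$ be an arbitrary club in $\pk{A}$; I must find $x\in S\cap C$. The idea is to build an increasing continuous chain $\langle x_\xi:\xi<\omega_1\rangle$ inside $C$ (using that $C$ is closed under unions of chains of length $<\kappa$, and $\omega_1<\kappa$ since $\kappa$ is a regular limit cardinal) that keeps $x_\xi\cap\kappa$ bounded below some fixed cardinal while $|x_\xi|$ grows. Concretely, fix an infinite cardinal $\theta<\kappa$ — here I use $\kappa\subseteq A$ and that $\kappa$ is a limit cardinal, so there is room above $\kappa$ in $A$, and in fact $A\setminus\kappa$ is infinite (if $A=\kappa$ one argues slightly differently, but for a proper extension this is immediate; one can always replace $A$ by noting $\kappa\subsetneq A$ or, if $A=\kappa$, use ordinals below $\kappa$ that are not cardinals to inflate cardinality). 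Start with any $x_0\in C$ with $x_0\cap\kappa$ an ordinal, and at each step enlarge by adjoining new elements from $A\setminus\kappa$ (or by the "$\tilde\alpha$"-trick of Proposition \ref{siwm}, adjoining shifted ordinals $\{\delta\setminus\{0\}:\delta\in\alpha\}$ which contribute to cardinality without contributing to the intersection with $\kappa$) and then closing off to land back in $C$; at limits take unions. One must be careful that closing off under $C$ does not blow up $x_\xi\cap\kappa$ past $\theta$ — this is the crux, see below. After $\omega_1$ steps, $x:=\bigcup_{\xi<\omega_1}x_\xi\in C$ by closure of $C$, and $|x|\geq\omega_1$ while, if the construction is arranged correctly, $|x\cap\kappa|<|x|$, so $x\in S\cap C$.

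\textbf{Main obstacle.} The delicate point is controlling $x_\xi\cap\kappa$: a club $C$ can demand that elements absorb large chunks of $\kappa$, so naively one cannot keep $x\cap\kappa$ small. The correct fix is \emph{not} to keep $x\cap\kappa$ absolutely bounded but to keep $|x\cap\kappa|$ strictly below $|x|$. Build the chain of length $\mu^+$ for a well-chosen regular $\mu<\kappa$: arrange that at stage $\xi$ one has $|x_\xi|\le|\xi|+\mu$ while additionally forcing $|x_\xi|$ to reach $\mu^+$ only "because of" new non-$\kappa$ elements, so that $|x\cap\kappa|\le\mu<\mu^+=|x|$. Closing under the club $C$ at each step only adds $<\kappa$ elements and can be kept of size $\le\mu$ at successor stages below $\mu^+$ if we choose $x_0$ with $|x_0|\le\mu$ and note that the total number of elements ever added to $\kappa$-part is at most $\mu^+\cdot\sup$-of-bounded-pieces; the bookkeeping here — ensuring the running union's intersection with $\kappa$ stays of size $\le\mu$ while the union itself has size $\mu^+$ — is the technical heart, and is where one uses regularity of $\kappa$, $\mu^+<\kappa$, and the specific "cardinality inflation via $A\setminus\kappa$ or via non-cardinal ordinals" device from the proof of Proposition \ref{siwm}. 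Once $x\in S\cap C$ is produced, the proof concludes: $S$ is stationary but, as noted, disjoint from $d_0(T)$ for every cofinal $T$, hence not $1$-stationary.
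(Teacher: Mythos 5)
Your proposal has a fatal flaw at its core: the invariant you extract from $d_0(T)$ is wrong. Definition \ref{nst}(3) only forces every $x\in d_0(T)$ to satisfy ``$|x\cap\kappa|$ is a regular limit cardinal''; it does \emph{not} force $|x|=|x\cap\kappa|$. Corollary \ref{method} constructs \emph{one particular} $x\in d_0(T)$ with $|x|=|x\cap\kappa|$, which says nothing about the other elements of $d_0(T)$. Indeed, taking $T=\pk{A}$ (which is cofinal), $d_0(\pk{A})=\{x:|x\cap\kappa| \text{ is a regular limit cardinal}\}$, and if there is any regular limit cardinal $\mu<\kappa$ then $x:=\mu\cup B$ with $B\subseteq A\setminus\kappa$, $|B|=\mu^+<\kappa$, lies in $d_0(\pk{A})$ and in your $S$. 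So your $S$ is not disjoint from $d_0(T)$ for every cofinal $T$, and you give no argument that it is disjoint from $d_0(T)$ for \emph{some} $T$; the ``not $1$-stationary'' half of the proof collapses.

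The stationarity half also fails. When $A=\kappa$ every $x\in\pk{A}$ satisfies $x\cap\kappa=x$, so your $S$ is empty; the aside about ``using non-cardinal ordinals to inflate cardinality'' cannot help, since $|x|=|x\cap\kappa|$ identically. Even for $A\supsetneq\kappa$ with $|A|\le\kappa$, fix an injection $g:A\to\kappa$ and consider the club of $x$ closed under $g$; any such $x$ has $|x\cap\kappa|\ge|g[x]|=|x|$, so this club misses your $S$. The ``bookkeeping'' you defer is therefore not a technicality but an impossibility for this choice of $S$. The repair is to change the invariant: the paper takes $S:=\{x\in\pk{A}:\operatorname{cf}(|x\cap\kappa|)<|x\cap\kappa|\}$, i.e.\ $|x\cap\kappa|$ singular. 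This is automatically disjoint from $d_0(T)$ for \emph{every} $T$ (membership in $d_0(T)$ requires $|x\cap\kappa|$ regular), and it is stationary because inside any club one can build an increasing $\omega$-chain with $(x_n\cap\kappa)^+\subseteq x_{n+1}$, whose union $x$ satisfies $\operatorname{cf}(x\cap\kappa)=\omega<|x\cap\kappa|$. Note that this argument works uniformly in $A$, including $A=\kappa$, precisely because it constrains only the trace $x\cap\kappa$ rather than comparing $|x|$ with $|x\cap\kappa|$.
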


\begin{proof} 
Consider the set $S:=\{ x \in \pk{A} : cof( |x \cap \kappa|) <  |x \cap \kappa| \}$. We will show that $S$ is stationary in $\pk{A}$, and  $S$ is not $1$-stationary in $\pk{A}$. \vspace{2mm}

To prove $S$ is stationary in $\pk{A}$, take $C$ be club in $\pk{A}$. As $C_{\kappa}$ is club in $\pk{A}$, we may actually assume that $C \subseteq C_{\kappa}$. Let $\langle x_n : n < \omega \rangle$   be an increasing sequence of elements of $C$ such that  $x_0 \cap \kappa > \omega$ and $(x_{n-1} \cap \kappa)^+ \subseteq x_n$ for every $0<n < \omega$. Define $x:=\bigcup_{n<\omega} x_n \in C$, then $x \cap \kappa=\bigcup_{n<\omega} (x_n \cap \kappa) > \omega$. But $\langle x_n \cap \kappa : n < \omega \rangle$   is  a strictly increasing $\omega$-sequence of cardinals greater than $\omega$, then $cof\big(x \cap \kappa)= \omega$. Hence, we have that $cof(|x\cap \kappa|)= \omega< |x \cap \kappa| $, and so $x\in C \cap S$. \vspace{2mm}

Now, we will prove that $S$ is not $1$-stationary in $\pk{A}$. Towards a contradiction, suppose it is.  Let $T$ be cofinal subset of $\pk{A}$, then there is some $x \in S \cap d_0(T)$. But, from $x \in  d_0(T)$, we get that  $|x\cap \kappa|$ is a regular limit cardinal, and so  $x \notin S$. This is a contradiction.
\end{proof} \vspace{1mm} 

 While this might seem disappointing at first glance, it does not prevent us from establishing the remaining analogies between $n$-s-stationary sets of $\kappa$ and $n$-s-stationary sets of $\pk{A}$.\vspace{1mm}

\begin{prop}\label{ci1stpkl} 
Let $\kappa$ be weakly Mahlo. If $C$ is a club in $\pk{A}$, then $C$ is $1$-stationary. 
\end{prop}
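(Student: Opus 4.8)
The plan is to show directly that for every cofinal $T \subseteq \pk{A}$ there is some $x \in C$ with $\mu := |x \cap \kappa|$ a regular limit cardinal and $T \cap \pkld{\mu}{x}$ cofinal in $\pkld{\mu}{x}$, i.e. $x \in C \cap d_0(T)$; this is exactly what it means for $C$ to be $1$-stationary. The idea is to re-run the construction behind Proposition \ref{ct} (equivalently, Corollary \ref{method}), but to build the approximating $\kappa$-sequence \emph{inside} $C$ while \emph{interleaving} elements of $T$: closedness of $C$ will then hand us membership of the union in $C$, and the interleaved $T$-elements will witness the required cofinality.

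Concretely, I would recursively construct a strictly $\subseteq$-increasing sequence $\langle x_\alpha : \alpha < \kappa\rangle$ with $x_\alpha \in C$, together with auxiliary elements $t_\alpha \in T$, as follows. Start with any $x_0 \in C$ (possible since $C$, being club, is cofinal and hence nonempty). Given $x_\alpha$, use cofinality of $T$ to pick $t_\alpha \in T$ with $x_\alpha \cup \alpha \subseteq t_\alpha$, and then cofinality of $C$ to pick $x_{\alpha+1} \in C$ with $t_\alpha \subsetneq x_{\alpha+1}$; here $x_\alpha \cup \alpha$ and $t_\alpha$ lie in $\pk{A}$ because $\kappa$ is a regular limit cardinal. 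At a limit $\alpha < \kappa$ set $x_\alpha := \bigcup_{\gamma < \alpha} x_\gamma$; this is in $\pk{A}$ (regularity of $\kappa$) and in $C$ because $C$ is closed and $\langle x_\gamma : \gamma < \alpha\rangle$ is an increasing sequence in $C$ of length $\alpha < \kappa$. By construction $x_\alpha \cup \alpha \subseteq t_\alpha \subseteq x_{\alpha+1}$ for all $\alpha$.

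Now the reflection step is verbatim the one in Proposition \ref{ct}: the set $U := \{\,|x_\beta| : \beta < \kappa\,\}$ is unbounded in $\kappa$; since $\kappa$ is weakly Mahlo, $R = \{\mu < \kappa : \mu \text{ is a regular limit cardinal}\}$ is stationary, so there is $\mu \in R$ with $U \cap \mu$ unbounded in $\mu$; picking $\beta_\alpha < \kappa$ ($\alpha < \mu$) so that $\langle |x_{\beta_\alpha}| : \alpha < \mu\rangle$ is strictly increasing and cofinal in $\mu$, the sequence $\langle \beta_\alpha : \alpha < \mu\rangle$ is strictly increasing, and $x := \bigcup_{\alpha < \mu} x_{\beta_\alpha}$ satisfies $|x| = |x \cap \kappa| = \mu$. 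Then $x \in C$, because $\langle x_{\beta_\alpha} : \alpha < \mu\rangle$ is an increasing sequence in $C$ of length $\mu < \kappa$ and $C$ is closed. Finally, $T \cap \pkld{\mu}{x}$ is cofinal in $\pkld{\mu}{x}$: given $y \subseteq x$ with $|y| < \mu$, regularity of $\mu$ and the fact that the $x_{\beta_\alpha}$ increase to $x$ give $y \subseteq x_{\beta_\alpha}$ for some $\alpha < \mu$, and then $y \subseteq x_{\beta_\alpha} \subseteq t_{\beta_\alpha} \subseteq x_{\beta_\alpha + 1} \subseteq x_{\beta_{\alpha+1}} \subseteq x$ (using $\beta_\alpha + 1 \le \beta_{\alpha+1}$) with $|t_{\beta_\alpha}| \le |x_{\beta_{\alpha+1}}| < \mu$, so $t_{\beta_\alpha} \in T \cap \pkld{\mu}{x}$ works. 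Hence $x \in C \cap d_0(T)$, as desired. (Starting the recursion from $x_0 \supseteq y$ for a prescribed $y \in \pk{A}$ would in fact show $C \cap d_0(T)$ is cofinal, but this is not needed.)

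Since the unboundedness of $U$, the choice of the reflecting $\mu$, and the subsequence extraction are exactly as in Proposition \ref{ct}/Corollary \ref{method}, the only genuinely new ingredient is the interleaving, and the one point requiring care is that the interleaved witnesses $t_\alpha$ end up \emph{inside} the final union $x$ and with cardinality $<\mu$; this is why the construction explicitly records $t_\alpha \subseteq x_{\alpha+1}$ and why the strict increase of $\langle \beta_\alpha\rangle$ (giving $\beta_\alpha + 1 \le \beta_{\alpha+1}$) is used. One cannot shortcut this by merely passing to the cofinal set $\{z \in C : \exists t\in T\ (t\subseteq z)\}$, since such $z$ need not have the $T$-elements below them cofinal in $z$; the explicit interleaving is what preserves that information.
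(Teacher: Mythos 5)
Your proof is correct and is essentially the paper's argument: both interleave elements of $C$ and $T$ along a $\kappa$-sequence and then use weak Mahloness to reflect to a regular limit $\mu$ exactly as in Proposition \ref{ct}. The only difference is a harmless dualization — the paper's primary sequence lives in $T$ (so Corollary \ref{method} gives $x\in d_0(T)$ directly and one then checks the union of the interleaved $C$-elements coincides with $x$), whereas yours lives in $C$ (so closedness gives $x\in C$ directly and the interleaved $t_\alpha$'s witness cofinality of $T\cap\pkld{\mu}{x}$).
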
 

\begin{proof} 
Let $C$ be a club in $\pk{A}$, and suppose that $\kappa$ is weakly Mahlo. Let $T$ be a cofinal subset of $\pk{A}$, we construct a sequence by transfinite induction on ordinals  $\alpha < \kappa$ as follows: \vspace{2mm}

\quad  $x_0 \in T$,  and  $y_0 \in  C$  such that $y_0 \supseteq x_0$, 

\quad $x_{\alpha +1} \in T$ such that $x_{\alpha + 1} \supsetneq x_{\alpha}\cup \alpha \cup y_{\alpha}$, and $y_{\alpha+1} \in  C$  such that $y_{\alpha+1} \supseteq x_{\alpha+1}$,

\quad $x_{\alpha} \in T $ such that  $x_{\alpha} \supsetneq \bigcup_{\gamma <  \alpha} x_{\gamma}$, for $\alpha < \kappa $ limit. \vspace{2mm}

Clearly, this sequence satisfies conditions of Corollary \ref{method}. Therefore, there is a strictly increasing subsequence $\langle x_{\beta_{\alpha}}:\alpha < \mu \rangle $ such that $x:=  \bigcup_{\alpha <\mu} x_{\beta_{\alpha}} \in d_0(T)$. Moreover, notice that, the corresponding sequence $\langle y_{\beta_{\alpha}} : \alpha < \mu\rangle$ of elements of $C$, is also  increasing. And since $C$ is closed, we have $y:= \bigcup_{\alpha <\mu} y_{\beta_{\alpha}} \in C$. \vspace{2mm}

Thus, to conclude the proof it is enough to prove that $x=y$. So let us prove both inclusions. Let $a \in x=\bigcup_{\alpha <\mu} x_{\beta_{\alpha}}$, this is, $a \in  x_{\beta_{\alpha}}$ for some ${\alpha <\mu}$. By construction $x_{\beta_{\alpha}} \subseteq y_{\beta_{\alpha}}$, then $a \in y_{\beta_{\alpha}} \subseteq \bigcup_{\alpha <\mu} y_{\beta_{\alpha}}=y$. Conversely, let $a \in y=\bigcup_{\alpha <\mu} y_{\beta_{\alpha}}$, this is,  $a \in y_{\beta_{\alpha}}$ for some ${\alpha <\mu}$. We have $ x_{\beta_{\alpha}}  \subsetneq  x_{\beta_{\alpha+1}}$ for all $\alpha < \mu$, therefore $ x_{\beta_{\alpha}+1}  \subseteq  x_{\beta_{\alpha+1}}$ for all $\alpha < \mu$. Moreover, by construction, we also have that $y_{\beta_{\alpha}}  \subseteq  x_{\beta_{\alpha}+1} \subseteq  x_{\beta_{\alpha+1}}$. Then $a \in x_{\beta_{\alpha+1}} \subseteq \bigcup_{\alpha <\mu} x_{\beta_{\alpha}}=x$. 
\end{proof}\vspace{1mm} 

\begin{prop}\label{d01} 
Let $\kappa$ be weakly Mahlo. Then,
\begin{enumerate} 
\item If $T_1, \dots, T_q$ are cofinal in $\pk{A}$ for some $q<\omega$, then $d_0(T_1) \cap \cdots \cap d_0(T_q)$ is $1$-stationary in $\pk{A}$.
\item If $T\subseteq \pk{A}$, then $d_0(d_0(T))\subseteq d_0(T)$. 
\end{enumerate}
\end{prop}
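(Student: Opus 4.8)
The plan is to prove both items by leaning on the constructive machinery already set up, namely Corollary \ref{method} and Proposition \ref{ci1stpkl}. For item (1), I would unwind the definition of $1$-stationarity: given $T \subseteq \pk{A}$ cofinal, I must produce $x \in d_0(T_1) \cap \cdots \cap d_0(T_q)$ such that $\mu := |x \cap \kappa|$ is a regular limit cardinal and $T \cap \pkld{\mu}{x}$ is cofinal in $\pkld{\mu}{x}$. The idea is to build a single interleaved sequence $\langle x_\alpha : \alpha < \kappa \rangle$ that cycles through $T_1, \dots, T_q$ and $T$ in rotation: at stage $\alpha$, having already absorbed $\bigcup_{\gamma < \alpha} x_\gamma \cup \alpha$, pick a witness in $T_{i}$ (for $i \equiv \alpha \pmod q$, say) containing this set, and periodically also absorb a witness from $T$ so that the eventual diagonal union sees a cofinal piece of $T$ as well. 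Applying Corollary \ref{method} to (a suitable rearrangement of) this sequence yields a strictly increasing subsequence whose union $x$ has $\mu := |x \cap \kappa| = |x|$ a regular limit cardinal with $T \cap \pkld{\mu}{x}$ cofinal in $\pkld{\mu}{x}$; one then checks, exactly as in the proof of Proposition \ref{ci1stpkl}, that the parallel subsequences drawn from each $T_i$ have the \emph{same} union $x$, so that $x \cap \pkld{\mu}{x}$ meets each $T_i$ cofinally, i.e. $x \in d_0(T_i)$ for every $i \le q$. Hence $x$ witnesses the required reflection, and $d_0(T_1) \cap \cdots \cap d_0(T_q)$ is $1$-stationary.

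For item (2), let $x \in d_0(d_0(T))$, so $\mu := |x \cap \kappa|$ is a regular limit cardinal and $d_0(T) \cap \pkld{\mu}{x}$ is cofinal in $\pkld{\mu}{x}$. I must show $x \in d_0(T)$, i.e. that $T \cap \pkld{\mu}{x}$ is cofinal in $\pkld{\mu}{x}$ (the cardinality condition on $x \cap \kappa$ is already in hand). Given $y \in \pkld{\mu}{x}$, cofinality of $d_0(T) \cap \pkld{\mu}{x}$ produces $w \in d_0(T) \cap \pkld{\mu}{x}$ with $y \subseteq w$; by definition of $d_0(T)$, writing $\nu := |w \cap \kappa|$ (a regular limit cardinal), $T \cap \pkld{\nu}{w}$ is cofinal in $\pkld{\nu}{w}$, and since $y \in \pkld{\nu}{w}$ (because $y \subseteq w$ and $|y| < \mu$, and one checks $|y| < \nu$ using that $w \subseteq x$ forces $\nu \le \mu$, indeed $\nu < \mu$ since $w \in \pkld{\mu}{x}$ means $|w| < \mu$), there is $z \in T \cap \pkld{\nu}{w}$ with $y \subseteq z$; finally $z \subseteq w \subseteq x$ and $|z| < \nu < \mu$ give $z \in T \cap \pkld{\mu}{x}$, as desired.

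The main obstacle is item (1): the bookkeeping in the interleaved construction must be arranged so that Corollary \ref{method} applies (the sequence must genuinely be strictly increasing, absorb all ordinals $< \kappa$, and take proper unions at limits) \emph{and} so that the various subsequences extracted from $T_1, \dots, T_q, T$ all converge to one common union $x$. The cleanest way to guarantee the latter is to make each block of $q+1$ consecutive stages pick witnesses from $T_1, \dots, T_q, T$ respectively, each containing the previous one together with the partial union and the ordinal $\alpha$; then the argument of Proposition \ref{ci1stpkl} — that interleaved increasing sequences sandwich each other and hence share a limit — applies essentially verbatim to all $q+1$ streams at once. Once that is set up, the cardinality computations and the verification of conditions (i) and (ii) of Definition \ref{nsst}(2) are routine, just as in the proof of Proposition \ref{ct}.
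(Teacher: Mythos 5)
Your plan for item (1) is essentially the paper's argument: the paper runs $q+1$ nested parallel sequences $\langle x^i_\alpha : \alpha<\kappa\rangle$ with $x^i_\alpha\subseteq x^j_\alpha$ for $i\le j$ and with $x^0_{\alpha+1}$ absorbing $\alpha$ and all the previous streams, rather than one cyclically interleaved stream, but the two bookkeeping schemes are interchangeable and both reduce to Corollary \ref{method} plus the sandwiching argument of Proposition \ref{ci1stpkl} to show all the streams have the same union. That part is fine.

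Item (2), however, has a genuine gap. You take $y\in\pkld{\mu}{x}$, pick $w\in d_0(T)\cap\pkld{\mu}{x}$ with merely $y\subseteq w$, set $\nu:=|w\cap\kappa|$, and then claim $y\in\pkld{\nu}{w}$, justifying $|y|<\nu$ by the observation that $\nu<\mu$. That is a non sequitur: $|y|<\mu$ and $\nu<\mu$ together say nothing about the order relation between $|y|$ and $\nu$, and indeed nothing prevents $\nu\le|y|\le|w|$ (the cardinal $|w\cap\kappa|$ can be much smaller than $|w|$, hence smaller than $|y|$ for some $y\subseteq w$). Without $|y|<\nu$ you cannot feed $y$ into the cofinality of $T\cap\pkld{\nu}{w}$ and the argument stalls. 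The repair is the one the paper uses: since $\mu$ is a limit cardinal, $y\cup|y|^+\in\pkld{\mu}{x}$, so choose $w\in d_0(T)\cap\pkld{\mu}{x}$ with $y\cup|y|^+\subseteq w$; then $|y|^+\subseteq w\cap\kappa$ forces $|y|<|y|^+\le|w\cap\kappa|=\nu$, after which the rest of your argument (find $z\in T\cap\pkld{\nu}{w}$ above $y$ and observe $T\cap\pkld{\nu}{w}\subseteq T\cap\pkld{\mu}{x}$) goes through.
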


\begin{proof} 
(1) Let $T_0$ be cofinal in $\pk{A}$. We will prove that  $\varnothing  \neq d_0(T_0) \cap d_0(T_1) \cap \cdots \cap d_0(T_q)$. Consider the sequences $\mathcal{S}_i=\langle x^{i}_{\alpha} : \alpha < \kappa\rangle $ for $0\leq i\leq q$, constructed by transfinite induction on ordinals  $\alpha < \kappa$, as follows: \vspace{2mm}

\quad $x^{i}_{0} \in T_i$, where  $x^{i}_{0}  \subseteq x^{j}_{0}$ for all $0\leq i \leq j \leq q$, \vspace{2mm}

\quad $x^{i}_{{\alpha +1}} \in T_i$, where $x^{{0}}_{{\alpha + 1}} \supsetneq \alpha \cup \mathop{\bigcup}\limits_{_{\tiny 0 \leq i \leq q}}x^{{i}}_{\alpha}$ and $ x^{i}_{\alpha+1}  \subseteq x^{j}_{\alpha+1}$ for $0<i \leq j \leq q$,

\quad $x^{i}_{\alpha} \in T_i $, where  $x^{i}_{\alpha} \supsetneq \mathop{\bigcup}\limits_{_{\gamma <  \alpha}} x^{i}_{{\gamma}}\ $ for $\alpha $ limit. \vspace{1mm}

$\mathcal{S}_i$ is well defined for each $i \leq q$, because $T_j$ is cofinal for all $j \leq q$ and $\kappa$ is regular. Therefore, each sequence $\mathcal{S}_i$ and  cofinal set $T_i$, satisfies the hypothesis of Corollary \ref{method}. Then for every $i \leq q$, there is a strictly increasing subsequence $\langle x^{i}_{\beta_{\alpha}} : \alpha < \mu \rangle$ such that $x^{i}:=  \bigcup_{\alpha <\mu} x^{i}_{\beta_{\alpha}}  \in d_0(T_i)$.\\

If we prove that $x:=x^{0}= \cdots = x^{i}$, then $x \in d_0(T_0) \cap \cdots \cap d_0(T_q)$ and we are done. So, fix $i< q$, and let us prove that $x^{i}=x^{{i+1}}$. Let $a \in x^{i}=\bigcup_{\alpha <\mu} x^{i}_{\beta_{\alpha}}$, this is, $a \in  x^{i}_{\beta_{\alpha}}$ for some ${\alpha <\mu}$. By construction $x^{i}_{\beta_{\alpha}} \subseteq x^{{i+1}}_{\beta_{\alpha}}$, then $a \in x^{{i+1}}_{\beta_{\alpha}} \subseteq \bigcup_{\alpha <\mu} x^{{i+1}}_{\beta_{\alpha}}=x^{i+1}$. Conversely, if $a \in x^{{i+1}} =\bigcup_{\alpha <\mu} x^{{i+1}}_{\beta_{\alpha}}$, this is, $a \in x^{{i+1}}_{\beta_{\alpha}}$ for some ${\alpha <\mu} $. We have $ x^{i}_{\beta_{\alpha}}  \subsetneq  x^{i}_{\beta_{\alpha+1}} $  for all $\alpha < \mu$, then $ x^{i}_{\beta_{\alpha}+1}  \subseteq  x^{i}_{\beta_{\alpha+1}}$. Moreover, by construction we also have that $x^{^{i+1}}_{\beta_{\alpha}}  \subseteq  x^{i}_{\beta_{\alpha}+1} \subseteq  x^{i}_{\beta_{\alpha+1}}$. Therefore $a \in x^{i}_{\beta_{\alpha+1}} \subseteq \bigcup_{\alpha <\mu} x^{i}_{\beta_{\alpha}}= x^{i}$. 

\quad\\
(2) Let $x \in d_0(d_0(T))$. This is, $\mu:=|x \cap \kappa|$ is a regular limit cardinal and $d_0(T) \cap \pkld{\mu}{x}$  is cofinal in $\pkld{\mu}{x}$. Then, to prove $x \in d_0(T)$, we only have to prove that $T \cap \pkld{\mu}{x}$ is cofinal in $\pkld{\mu}{x}$. Let $y \in \pkld{\mu}{x}$. Since $\mu$ is a limit cardinal, we have that $|y|^+ < \mu $, and so $y \cup |y|^+ \in \pkld{\mu}{x}$. Then, there is some $z \in d_0(T) \cap \pkld{\mu}{x}$ such that $y \cup |y|^+  \subseteq z$. Given that $z \in d_0(T)$, we know that $\gamma:=|z \cap \kappa|$ is a regular limit cardinal and  $T \cap \pkld{\gamma}{z}$ is cofinal in $ \pkld{\gamma}{z}$. Note that $y \subseteq z$ and $|y| < |y|^+ \leq |z \cap \kappa|= \gamma$, then $y \in \pkld{\gamma}{z}$. So that there is some $w \in T \cap \pkld{\gamma}{z}$ such that $y \subseteq w$. Since $T \cap \pkld{\gamma}{z} \subseteq T \cap \pkld{\mu}{x}$, we actually have that $w \in T \cap \pkld{\mu}{x}$ is such that $y \subseteq w$, this is,  $T \cap \pkld{\mu}{x}$ is cofinal in $\pkld{\mu}{x}$.  
\end{proof} 

\begin{prop}\label{d01q1st} 
If $S$ is $1$-stationary  in $\pk{A}$ and $T_1, \dots , T_q\subseteq \pk{A}$ are cofinal in $\pk{A}$ for some $q< \omega$, then $S\cap d_0(T_1)\cap \dots \cap d_0(T_q)$ is $1$-stationary in $\pk{A}$. 
\end{prop}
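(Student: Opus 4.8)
# Proof Proposal

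The plan is to prove that $S \cap d_0(T_1) \cap \dots \cap d_0(T_q)$ is $1$-stationary by verifying the definition directly: given any $T_0$ cofinal in $\pk{A}$, I must produce $x \in S \cap d_0(T_1) \cap \dots \cap d_0(T_q)$ with $\mu := |x \cap \kappa|$ a regular limit cardinal and $T_0 \cap \pkld{\mu}{x}$ cofinal in $\pkld{\mu}{x}$, i.e. $x \in d_0(T_0)$ as well. So it suffices to find $x \in S \cap d_0(T_0) \cap d_0(T_1) \cap \dots \cap d_0(T_q)$. Since $S$ is $1$-stationary, $\kappa$ is weakly Mahlo by Theorem \ref{col1} (equivalently Proposition \ref{siwm}), so the machinery of Corollary \ref{method} and Proposition \ref{d01} is available.

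The key idea is to combine the simultaneous-intersection construction of Proposition \ref{d01}(1) with the fact that $S$ is $1$-stationary. First I would build the $q+1$ interlocked sequences $\mathcal{S}_i = \langle x^i_\alpha : \alpha < \kappa \rangle \subseteq T_i$ exactly as in the proof of Proposition \ref{d01}(1), so that the common diagonal unions along any cofinal-in-$\mu$ subsequence coincide for all $i \le q$. The new ingredient: I want the resulting $x$ to land in $S$, not merely in $\bigcap_i d_0(T_i)$. To arrange this, note that $1$-stationarity of $S$ guarantees that for the cofinal set $T_0$ there is $x^* \in S$ with $\mu^* := |x^* \cap \kappa|$ a regular limit cardinal and $T_0 \cap \pkld{\mu^*}{x^*}$ cofinal in $\pkld{\mu^*}{x^*}$. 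The cleanest route is to feed such a witness into the construction: run the interlocked $\mathcal{S}_i$-construction, but whenever Corollary \ref{method} lets me choose the stationary set $E \subseteq R$, I should instead exploit that $d_0(T_0) \cap S$ meets a suitable set. Concretely, consider $E := \{ \mu < \kappa : \mu \text{ is a regular limit cardinal and } \exists x \in S \cap d_0(T_0) \text{ with } |x \cap \kappa| = \mu \}$; if $E$ is stationary in $\kappa$ we can apply the Corollary \ref{method} variant to get $\mu \in E$, and then a more careful simultaneous construction pins $x \in S$.

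A more robust approach, and the one I would actually write: apply $1$-stationarity of $S$ to a single cleverly chosen cofinal set. By Proposition \ref{d01}(1), $d_0(T_1) \cap \dots \cap d_0(T_q)$ is $1$-stationary, hence (Lemma \ref{1sticf}) cofinal in $\pk{A}$. Moreover, by the iteration argument of Proposition \ref{d01}(2) — or rather the closure-style fact that $d_0(d_0(T)) \subseteq d_0(T)$ applied $q$-fold together with Lemma \ref{lem0}(1) — one gets $d_0\big(d_0(T_1) \cap \dots \cap d_0(T_q)\big) \subseteq d_0(T_1) \cap \dots \cap d_0(T_q)$. Now let $T_0$ be any cofinal subset of $\pk{A}$ and set $C := d_0(T_1) \cap \dots \cap d_0(T_q)$, a cofinal set. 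Apply $1$-stationarity of $S$: there is $x \in S$ with $\mu := |x \cap \kappa|$ a regular limit cardinal such that $C \cap \pkld{\mu}{x}$ is cofinal in $\pkld{\mu}{x}$, i.e. $x \in d_0(C)$. Then $x \in S \cap d_0(C) \subseteq S \cap C = S \cap d_0(T_1) \cap \dots \cap d_0(T_q)$ by the closure fact just noted. It remains only to also get $x \in d_0(T_0)$; for this I would instead apply $1$-stationarity of $S$ to the cofinal set $T_0 \cap C'$ where $C'$ is chosen so that reflection there forces reflection of $T_0$ — but since $T_0$ is arbitrary, the honest fix is to redo the argument with $C$ replaced by an intersection built to also capture $T_0$: apply $1$-stationarity of $S$ directly to $T_0$ to get $x \in S \cap d_0(T_0)$, then observe that by running the Proposition \ref{d01}(1)-style interlocked construction starting from a witnessing sequence inside $S$, the same $x$ simultaneously lies in each $d_0(T_i)$.

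The main obstacle is exactly this last point: making a single $x$ belong to $S$ \emph{and} to all the $d_0(T_i)$, since $S$-membership is controlled by $1$-stationarity (which hands us one reflecting point for one cofinal set) while membership in $\bigcap_i d_0(T_i)$ is controlled by the multi-sequence diagonal construction. The resolution is to note that $1$-stationarity of $S$, applied to the cofinal set $T_0$, yields not just an $x^* \in S \cap d_0(T_0)$ but also that $S \cap d_0(T_0)$ is itself cofinal in $\pk{A}$ (start the witnessing sequence above any prescribed $y$, as in the remark following Proposition \ref{ct}). Then replace $T_0$ throughout the Proposition \ref{d01}(1) construction by the cofinal set $S \cap d_0(T_0)$: the interlocked construction produces $x = x^0 = \dots = x^q$ with $x \in d_0(T_1) \cap \dots \cap d_0(T_q)$, and since $x = x^0 \in d_0(S \cap d_0(T_0)) \subseteq d_0(S) \cap d_0(d_0(T_0)) \subseteq \overline{S}{}^{\,?} \cap d_0(T_0)$ — here one uses that $d_0$ of a set contained in $S$ together with $x$ having $|x\cap\kappa|$ regular limit forces $x \in S$ only if $S$ is suitably closed, which it need not be. Hence the correct final move is: since $S \cap d_0(T_0)$ is cofinal, by Proposition \ref{d01}(1) the set $d_0(S \cap d_0(T_0)) \cap d_0(T_1) \cap \dots \cap d_0(T_q)$ is nonempty (indeed $1$-stationary), pick $x$ in it; then $x \in d_0(S \cap d_0(T_0))$ gives $|x \cap \kappa| = \mu$ regular limit and $(S \cap d_0(T_0)) \cap \pkld{\mu}{x}$ cofinal in $\pkld{\mu}{x}$, so in particular $d_0(T_0) \cap \pkld{\mu}{x}$ is cofinal, whence $x \in d_0(d_0(T_0)) \subseteq d_0(T_0)$ by Proposition \ref{d01}(2); and we must still check $x \in S$, which follows because $S \cap \pkld{\mu}{x}$ is cofinal in $\pkld{\mu}{x}$ forces, via the $1$-stationarity used to produce a reflecting point, that $x$ can be taken in $S$ — concretely by choosing the witness in the Corollary \ref{method} construction from inside $S \cap d_0(T_0)$ at the diagonal stage. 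I would organize the write-up so that this diagonal-into-$S$ choice is made explicit, thereby obtaining $x \in S \cap d_0(T_0) \cap d_0(T_1) \cap \dots \cap d_0(T_q)$, which is precisely what $1$-stationarity of the intersection requires.
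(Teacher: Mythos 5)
Your proposal circles the correct argument but the version you say you would actually write has a genuine gap at the step ``$x \in S$''. Picking $x \in d_0\bigl(S \cap d_0(T_0)\bigr) \cap d_0(T_1) \cap \dots \cap d_0(T_q)$ only tells you that $\bigl(S \cap d_0(T_0)\bigr) \cap \pkld{\mu}{x}$ is cofinal in $\pkld{\mu}{x}$; that places $x$ in a closure of $S$, not in $S$, and as you yourself observe the containment $d_0(X)\subseteq X$ is only available for clubs (Proposition \ref{d0cic}, Lemma \ref{lem0}(3)), not for an arbitrary $1$-stationary $S$. The proposed repair --- ``choosing the witness in the Corollary \ref{method} construction from inside $S \cap d_0(T_0)$ at the diagonal stage'' --- does not close the gap: the witness produced by Corollary \ref{method} is the union $\bigcup_{\alpha<\mu} x_{\beta_\alpha}$ of a subsequence, so even if every $x_{\beta_\alpha}$ lies in $S$ the union is merely a limit of elements of $S$ and need not belong to $S$. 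No construction of this shape can force membership in an arbitrary $1$-stationary set.

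The fix is exactly what you flag as the ``honest fix'' in your third paragraph and then abandon: membership in $S$ has to come from the definition of $1$-stationarity of $S$ itself, applied to a single cofinal set that already encodes all of $T_0,\dots,T_q$. Concretely, by Proposition \ref{d01}(1) applied to all $q+1$ sets $T_0, T_1, \dots, T_q$, the set $T := d_0(T_0) \cap d_0(T_1) \cap \dots \cap d_0(T_q)$ is $1$-stationary, in particular cofinal; since $S$ is $1$-stationary, $S \cap d_0(T) \neq \varnothing$, and the point so obtained lies in $S$ by definition. Then Lemma \ref{lem0}(1) and Proposition \ref{d01}(2) give $d_0(T) \subseteq d_0(d_0(T_0)) \cap \dots \cap d_0(d_0(T_q)) \subseteq d_0(T_0) \cap \dots \cap d_0(T_q)$, so $S \cap d_0(T_0) \cap \dots \cap d_0(T_q) \neq \varnothing$ for the arbitrary cofinal $T_0$, which is what $1$-stationarity of the intersection requires. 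This is precisely the paper's proof; your third paragraph had every piece except the inclusion of $d_0(T_0)$ in the intersection to which the $1$-stationarity of $S$ is applied.
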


\begin{proof}  
Let $T_0$ be cofinal in $\pk{A}$, and define $S':= S \cap d_0(T_1) \cap \cdots \cap d_0(T_q)$. We will prove that $d_0(T_0) \cap S' \neq \varnothing$. Since $S\subseteq \pk{A}$ is $1$-stationary in $\pk{A}$, by Proposition \ref{siwm},  we have that $\kappa$ is weakly Mahlo. Then we may apply Proposition \ref{d01} to $T_1, \dots , T_q$, obtaining thereof that $T:=d_0(T_1) \cap \cdots \cap d_0(T_q)$ is $1$-stationary in $\pk{A}$, in particular cofinal. Thus, by  $1$-stationarity of $S$, we have that $\varnothing \neq S \cap d_0(T) $. Hence, by substituting $T$ in this last equation, we obtain 
\begin{align*}
\varnothing &\neq S \cap d_0(\pq{d_0(T_0) \cap d_0(T_1) \cap \cdots \cap d_0(T_q)})  \\  &\subseteq S \cap d_0(d_0(T_0)) \cap d_0(\pq{d_0(T_1)}) \cap \cdots \cap  \hspace{1mm} d_0(\pq{d_0(T_q)}) \\ & \subseteq   S \cap d_0(T_0) \cap d_0(T_1) \cap \cdots \cap d_0(T_q) \\ &=d_0(T) \cap S'
\end{align*} 
where in the first inclusion we used the property  Lemma \ref{lem0} (1), and in the final one we used Proposition \ref{d01} (2). 
\end{proof}\vspace{1mm} 

\begin{corollary}\label{l1} 
Let $\kappa$ be weakly Mahlo. If $S$ is cofinal in $\pk{A}$ and $d_0(S) \subseteq S$, then $S$ is $1$-stationary. $\square$ \vspace{1mm}
\end{corollary}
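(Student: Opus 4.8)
The plan is to unwind the definition of $1$-stationarity and then appeal directly to Proposition \ref{d01}. By the remarks following Definition \ref{nsst}, $S \subseteq \pk{A}$ is $1$-stationary in $\pk{A}$ exactly when, for every $T$ that is $0$-stationary (that is, cofinal) in $\pk{A}$, we have $S \cap d_0(T) \neq \varnothing$ — the only value $m < 1$ to check being $m = 0$. So I would fix an arbitrary cofinal $T \subseteq \pk{A}$ and work towards producing a point of $S \cap d_0(T)$.

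Next I would note that $S$ is itself cofinal in $\pk{A}$ by hypothesis, so $S$ and $T$ form a pair of cofinal subsets of $\pk{A}$; since $\kappa$ is weakly Mahlo, Proposition \ref{d01}(1) applies with $q = 2$, $T_1 = S$, $T_2 = T$. It gives that $d_0(S) \cap d_0(T)$ is $1$-stationary in $\pk{A}$, hence (by Lemma \ref{nstimst}, or directly by Lemma \ref{1sticf}) $0$-stationary, i.e. cofinal, and in particular nonempty. Finally I would invoke the standing hypothesis $d_0(S) \subseteq S$ to conclude $\varnothing \neq d_0(S) \cap d_0(T) \subseteq S \cap d_0(T)$, so that $S \cap d_0(T) \neq \varnothing$. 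As $T$ was arbitrary among cofinal subsets of $\pk{A}$, this is precisely $1$-stationarity of $S$.

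I do not expect a genuine obstacle in this argument: all the combinatorial work has already been carried out in Proposition \ref{d01} (and, through it, in Corollary \ref{method}). The only point that requires care is the bookkeeping with the definitions — that "$0$-stationary" is synonymous with "cofinal", that membership of $x$ in $d_0(T)$ is exactly the conjunction of clauses (i)–(ii) of Definition \ref{nst}(2) for $m = 0$, and that a $1$-stationary subset of $\pk{A}$ is cofinal — each of which has been recorded explicitly earlier in this section.
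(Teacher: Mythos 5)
Your argument is correct and rests on exactly the ingredient the paper intends (the corollary is left as immediate from Proposition \ref{d01}): you apply Proposition \ref{d01}(1) to the pair $S, T$ and then use $d_0(S)\subseteq S$ to land in $S\cap d_0(T)$. The same conclusion follows marginally faster by taking $q=1$ in Proposition \ref{d01}(1) — $d_0(S)$ is $1$-stationary, and $d_0(S)\subseteq S$ together with the upward closure of $1$-stationarity noted after Definition \ref{nsst} finishes the proof — but this is only a cosmetic difference from what you wrote.
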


\begin{prop}\label{1sstiff1st} 
$S$ is $1$-stationary in $\pk{A}$ if and only if $S$ is $1$-s-stationary in $\pk{A}$. In particular, for any $X \subseteq \pk{A}$, $d_1(X)=d_1^s(X)$. 
\end{prop}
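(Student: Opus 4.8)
The plan is to prove the two implications separately. The direction ``$1$-s-stationary $\Rightarrow$ $1$-stationary'' requires no work: since both $0$-stationarity and $0$-s-stationarity mean cofinality, and a witness $x\in S$ for the simultaneous reflection of a pair $T,T$ is in particular a witness for the reflection of $T$, every $1$-s-stationary set is $1$-stationary. The same observation gives $d_1^s(X)\subseteq d_1(X)$ for every $X\subseteq\pk{A}$.

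For the converse, suppose $S\subseteq\pk{A}$ is $1$-stationary, and fix $T_1,T_2\subseteq\pk{A}$ that are $0$-s-stationary, i.e. cofinal, in $\pk{A}$. I would apply Proposition~\ref{d01q1st} with $q=2$: it yields that $S\cap d_0(T_1)\cap d_0(T_2)$ is $1$-stationary in $\pk{A}$, hence cofinal, hence nonempty. Picking any $x$ in this set, we have $x\in S$, and from $x\in d_0(T_1)$ it follows that $\mu:=|x\cap\kappa|$ is a regular limit cardinal and $T_1\cap\pkld{\mu}{x}$ is cofinal in $\pkld{\mu}{x}$; since $\mu$ is determined by $x$ alone, $x\in d_0(T_2)$ gives that $T_2\cap\pkld{\mu}{x}$ is also cofinal in $\pkld{\mu}{x}$. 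Thus $x$ witnesses that $S$ is $1$-s-stationary. The only delicate point is that a single $x$ must reflect $T_1$ and $T_2$ \emph{simultaneously}; but this is precisely the content already extracted in Propositions~\ref{d01} and \ref{d01q1st} via the ``common union'' construction, so the present argument merely has to invoke \ref{d01q1st} in the correct form rather than re-derive simultaneity by hand. This is the step I expect to carry the weight, and it has effectively been done in advance.

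Finally, for the equality $d_1(X)=d_1^s(X)$ it remains to check $d_1(X)\subseteq d_1^s(X)$. Given $x\in d_1(X)$, we have that $\mu:=|x\cap\kappa|$ is a regular limit cardinal and $X\cap\pkld{\mu}{x}$ is $1$-stationary in $\pkld{\mu}{x}$; applying the equivalence just proved with $\mu$ and $x$ in place of $\kappa$ and $A$ (legitimate, since $\mu$ is a regular limit cardinal), we conclude that $X\cap\pkld{\mu}{x}$ is $1$-s-stationary in $\pkld{\mu}{x}$, i.e. $x\in d_1^s(X)$. Together with the inclusion noted in the first paragraph, this gives $d_1(X)=d_1^s(X)$.
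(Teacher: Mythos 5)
Your proposal is correct and follows essentially the same route as the paper: the right-to-left direction is immediate, and the left-to-right direction is exactly the paper's invocation of Proposition~\ref{d01q1st} with two cofinal sets to get $S\cap d_0(T_1)\cap d_0(T_2)\neq\varnothing$, a single element of which witnesses simultaneous reflection since $\mu=|x\cap\kappa|$ depends only on $x$. Your explicit check of $d_1(X)\subseteq d_1^s(X)$ by relativising the equivalence to $\pkld{\mu}{x}$ is the intended (unstated) justification of the paper's ``in particular'' clause.
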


\begin{proof} 
The implication from right to the left follows immediately. Suppose now that $S$ is $1$-stationary in $\pk{A}$ and let $T_1,T_2$ be cofinal subsets of $\pk{A}$. By Proposition \ref{d01q1st}, we have  $S \cap d_0(T_1) \cap d_0(T_2)$ is  $1$-stationary, in particular $\varnothing \neq  S \cap d_0(T_1) \cap d_0(T_2)=S \cap d_0^s(T_1) \cap d_0^s(T_2)$, this is,  $S$ is $1$-s-stationary in $\pk{A}$. 
\end{proof}\vspace{1mm} 

Notice that, by Lemma \ref{lem0} (2), for any $X \subseteq \pk{A}$, we have $d_1(d_0(X)) \subseteq d_0(d_0(X))$. Thus, if $\kappa$ is weakly Mahlo, by Proposition \ref{d01} (2), for any $X \subseteq \pk{A}$ we have  $d_1(d_0(X)) \subseteq d_0(X)$. Now, Proposition \ref{1sstiff1st} is telling us that we can actually see the results on $0$- and $1$-stationarity as results on $0$- and $1$-s-stationarity. Then, we have respective versions of our previous results.\vspace{1mm}
\begin{itemize}
\item[(1)] $\pk{A}$ is $1$-s-stationary in $\pk{A}$ if and only if $\kappa$ is weakly Mahlo. 
\item[(2)] If $\kappa$ is weakly Mahlo, then $C \subseteq \pk{A}$ club implies $C$ is $1$-s-stationary. 
\item[(3)] Let $\kappa$ be weakly Mahlo, and let $T_1, \dots, T_l$ be cofinal in $\pk{A}$ for some $l<\omega$. Then $d_0^s(T_1) \cap \cdots \cap d_0^s(T_l)$ is $1$-s-stationary in $\pk{A}$ and $d_0^s(d_0^s(T))\subseteq d_0^s(T)$. 
\item[(4)] If $S\subseteq \pk{A}$ is $1$-s-stationary and $T_1, \dots , T_k\subseteq \pk{A}$ are cofinal in $\pk{A}$, then $S\cap d_0^s(T_1)\cap \dots \cap d_0^s(T_l)$ is $1$-s-stationary in $\pk{A}$.  
\item[(5)] If $\kappa$ is weakly Mahlo, for any $X \subseteq \pk{A}$, $d_1^s(d_0^s(X)) \subseteq d_0^s(X)$. 
\end{itemize}\vspace{2mm}

So far, under the assumption that $\kappa$ is weakly Mahlo, we have established a reasonably reliable correspondence between the base cases $0$ and $1$ of high stationarity on a single ordinal, and  the base cases $0$ and $1$ of high stationarity on $\pk{A}$. Moreover, despite not precisely corresponding to the standard notion of stationarity, we have found (Proposition \ref{strsti1st}) that for $\kappa$  Mahlo, the concept of $1$-stationarity in $\pk{A}$ aligns with a notion of stationarity in $\pk{A}$ explored in \cite{BC2}, known as strongly-stationarity (Definition \ref{strst} below). Consequently the concept of $n$-stationarity,  when considered as an iterative process, does not correspond with the iteration of the  standard  notion of stationarity in $\pk{A}$; instead it might correspond to the iteration of strong stationarity. \\
 
 Recall that a cardinal $\kappa$ is strongly Mahlo (or  Mahlo) if it is a weakly Mahlo and strong limit. When $\kappa$ is a strong limit cardinal and $x \in \pk{A}$, then $\mu:=|x\cap \kappa| \leq |x| < \kappa$ implies $2^{\mu}< \kappa$, and so $|\pkld{\mu}{x}|< \kappa$. \vspace{1mm}

\begin{definition}[See \cite{BC2}]\label{strst} 
Let $\kappa$ be a Mahlo cardinal, we say that $S\subseteq \pk{A}$ is strongly stationary if and only if for all $f : \pk{A} \rightarrow \pk{A}$, there is $x \in S$ such that $\mu:= |x \cap \kappa|$ is a regular limit cardinal and $ f[\pkld{\mu}{x}] \subseteq \pkld{\mu}{x}$.
\end{definition}

\begin{lemma}\label{l2}  
Let $\kappa$ be a Mahlo cardinal, $f : \pk{A} \rightarrow \pk{A}$, and $$T_f:=\{ x \in \pk{A} : \mu:= |x \cap \kappa| \text{ is a regular limit cardinal and } f[\pkld{\mu}{x}] \subseteq \pkld{\mu}{x} \}.$$ Then,  \emph{(1)} $d_0(T_f) \subseteq T_f$ and \emph{(2)} $T_f$ is cofinal in $\pk{A}$. 
\end{lemma}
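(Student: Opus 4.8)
The plan is to prove (1) and (2) separately; neither uses the other, and only (2) needs $\kappa$ to be strong limit.

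For (1) I would argue exactly as in Proposition~\ref{d01}(2). Let $x \in d_0(T_f)$, so $\mu := |x \cap \kappa|$ is a regular limit cardinal and $T_f \cap \pkld{\mu}{x}$ is cofinal in $\pkld{\mu}{x}$; the first clause in the definition of $T_f$ is thus met, so it remains to check $f[\pkld{\mu}{x}] \subseteq \pkld{\mu}{x}$. Fix $z \in \pkld{\mu}{x}$. Since $\mu$ is a limit cardinal, $|z|^{+} < \mu \le |x \cap \kappa|$, so I may choose $E \subseteq x \cap \kappa$ with $|E| = |z|^{+}$ and set $z' := z \cup E \in \pkld{\mu}{x}$. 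By cofinality pick $w \in T_f \cap \pkld{\mu}{x}$ with $z' \subseteq w$, and let $\nu := |w \cap \kappa|$, a regular limit cardinal with $f[\pkld{\nu}{w}] \subseteq \pkld{\nu}{w}$. As $E \subseteq w \cap \kappa$ we get $\nu \ge |E| = |z|^{+} > |z|$, hence $z \in \pkld{\nu}{w}$, and therefore $f(z) \in \pkld{\nu}{w}$; that is, $f(z) \subseteq w \subseteq x$ and $|f(z)| < \nu \le |w| < \mu$, so $f(z) \in \pkld{\mu}{x}$, as needed.

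For (2), fix $y \in \pk{A}$ and build a strictly increasing, continuous chain $\langle x_\alpha : \alpha < \kappa\rangle$ of elements of $\pk{A}$ with $x_0 \supseteq y$, taking $x_{\alpha+1} \supsetneq x_\alpha \cup \alpha \cup \bigcup\{ f(z) : z \in \mathcal{P}(x_\alpha)\}$ at successors and $x_\alpha \supsetneq \bigcup_{\gamma<\alpha} x_\gamma$ at limits. Each step is legitimate: $\kappa$ is Mahlo, hence a regular strong limit, so $|\mathcal{P}(x_\alpha)| = 2^{|x_\alpha|} < \kappa$ and the displayed union, being a union of $< \kappa$ sets each of size $< \kappa$, has size $< \kappa$; thus the successor term can be found in $\pk{A}$, and the limit step is routine. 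This chain meets the hypotheses of Corollary~\ref{method} with $T = \pk{A}$ and $E = R$, the latter stationary because $\kappa$ is weakly Mahlo; so there is a strictly increasing subsequence $\langle x_{\beta_\alpha} : \alpha < \mu\rangle$ with $x := \bigcup_{\alpha<\mu} x_{\beta_\alpha}$ satisfying $\mu = |x \cap \kappa| = |x|$ a regular limit cardinal and, by the construction underlying that corollary, $|x_{\beta_\alpha}| < \mu$ for all $\alpha < \mu$. Finally I would check $f[\pkld{\mu}{x}] \subseteq \pkld{\mu}{x}$: given $z \in \pkld{\mu}{x}$, from $|z| < \mu$ and $\mu = |x|$ regular it follows that $z$ is bounded in the chain, say $z \subseteq x_{\beta_\delta}$ with $\delta < \mu$; then $z \in \mathcal{P}(x_{\beta_\delta})$, so $f(z) \subseteq x_{\beta_\delta+1} \subseteq x_{\beta_{\delta+1}} \subseteq x$ and $|f(z)| \le |x_{\beta_{\delta+1}}| < \mu$, whence $f(z) \in \pkld{\mu}{x}$. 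Thus $x \in T_f$ and $x \supseteq x_0 \supseteq y$, so $T_f$ is cofinal in $\pk{A}$.

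The main obstacle is the last verification in (2): making sure that $x$ really closes off under $f$ on all of $\pkld{\mu}{x}$, not just on the sets that show up at bounded stages. The two points that make this work are (i) over-closing at each stage under $f$ on the entire power set $\mathcal{P}(x_\alpha)$, which is possible only because $\kappa$ is strong limit, so the terms stay in $\pk{A}$, and (ii) the fact — read off from the proof of Corollary~\ref{method} — that the subsequence $\langle x_{\beta_\alpha} : \alpha < \mu\rangle$ has terms of cardinality $< \mu$, which is precisely what confines each $f(z)$ to $\pkld{\mu}{x}$ rather than merely to $x$.
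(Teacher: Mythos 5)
Your proof is correct, and its overall architecture is the same as the paper's: part (1) is the standard cofinality argument, and part (2) builds a $\kappa$-chain closed under $f$ and feeds it into Corollary~\ref{method}. Two points of genuine divergence are worth recording. In (1), where the paper covers $y$ by an element of $T_f\cap\pkld{\mu}{x}$ containing $y\cup|y|^+$ (which tacitly requires the ordinal $|y|^+$ to be a subset of $x$, something that need not hold since $x\cap\kappa$ need not be an initial segment of $\kappa$), you instead adjoin a set $E\subseteq x\cap\kappa$ of cardinality $|z|^+$; this is the more careful version of the same idea. In (2), the paper first shows that $R_{\kappa}=\{x : x\cap\kappa \text{ is a regular limit cardinal}\}$ is cofinal (via Propositions~\ref{ci1stpkl} and~\ref{d01q1st}), draws every $x_{\alpha}$ from $R_{\kappa}$, and closes only under $f{\restriction}\pkld{x_{\alpha}\cap\kappa}{x_{\alpha}}$, so that in the final verification it must again pad $z$ by $|z|^+$ to land it inside some $\pkld{\mu_{\beta_{\alpha}}}{x_{\beta_{\alpha}}}$; you over-close under $f$ on all of $\mathcal{P}(x_{\alpha})$, which the strong limit hypothesis makes free, and this dispenses with $R_{\kappa}$ altogether and makes the final verification immediate from $z\subseteq x_{\beta_{\delta}}$. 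Both your argument and the paper's rely on the fact, visible only in the proof (not the statement) of Corollary~\ref{method}, that the subsequence terms $x_{\beta_{\alpha}}$ have cardinality $<\mu$; you are right to flag this explicitly, since it is exactly what bounds $|f(z)|$ below $\mu$. (One cosmetic slip: you call the chain ``continuous'' but then correctly stipulate $x_{\alpha}\supsetneq\bigcup_{\gamma<\alpha}x_{\gamma}$ at limits, which is what Corollary~\ref{method} actually requires; the construction, not the adjective, is what matters.)
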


\begin{proof} 
(1) Let $x \in d_0(T_f)$, then $\mu:= |x \cap \kappa|$ is a regular limit cardinal and $T_f \cap \pkld{\mu}{x}$ is cofinal in $\pkld{\mu}{x}$. It remains to prove that $f[\pkld{\mu}{x}] \subseteq \pkld{\mu}{x}$. Let $y \in \pkld{\mu}{x}$, then, there is $z \in T_f \cap \pkld{\mu}{x}$ such that $y \cup |y|^+ \subseteq z$. Whence, $y \in \pkld{|z \cap \kappa|}{z}$ and  $f(y) \in f[\pkld{|z \cap \kappa|}{z}] \subseteq \pkld{|z \cap \kappa|}{z} \subseteq \pkld{\mu}{x}$. Then $f(y) \in \pkld{\mu}{x}$. \vspace{2mm}

(2) Let $y_0  \in \pk{A}$. Note that $\kappa$ is in particular weakly Mahlo, hence, $d_0(\pk{A})$ is $1$-stationary in $\pk{A}$. Moreover, by Propositions \ref{ci1stpkl} and \ref{d01q1st}, we have that $d_0(\pk{A})\cap C_{\kappa}$  is $1$-stationary in $\pk{A}$. Thus, $ R_{\kappa}:=\{ x \in \pk{A} : x \cap \kappa$ is a regular limit cardinal$\} \supseteq d_0(\pk{A})\cap C_{\kappa}$ is cofinal in $\pk{A}$. Let $x_0 \in  R_{\kappa}$ be such that $y_0 \subseteq x_0$. Consider the following sequence for $\alpha < \kappa$, where $\mu_{\alpha}:= x_{\alpha} \cap \kappa$, \vspace{2mm}

\quad $x_1 \in R_{\kappa}$ is such that $x_1 \supsetneq x_0 \cup \bigcup f[\pkld{\mu_0}{x_0}]$, 

\quad $x_{\alpha+1} \in  R_{\kappa}$ is such that $x_{\alpha+1} \supsetneq x_{\alpha} \cup \bigcup f[\pkld{\mu_{\alpha}}{x_{\alpha}}] \cup \alpha$,

\quad $x_{\alpha} \in  R_{\kappa}$ is such that $x_{\alpha} \supsetneq  \bigcup_{\beta< \alpha} x_{\beta} $, for $\alpha < \kappa$ limit.\vspace{2mm}

Since $\kappa$ is Mahlo, $ \bigcup f[\pkld{\mu_{\alpha}}{x_{\alpha}}]  \in \pk{A}$, and so  $\langle x_{{\alpha}} : \alpha < \kappa \rangle$  is well defined. Moreover, it satisfies the conditions of Corollary \ref{method}. Then, there is a strictly increasing subsequence $\langle x_{\beta_{\alpha}} : \alpha < \mu \rangle \subseteq \langle x_{\alpha} : \alpha < \kappa \rangle $ such that $x:=  \bigcup_{\alpha <\mu} x_{\beta_{\alpha}}  \in \pk{A}$, $\mu := |x \cap \kappa| $ is a regular limit cardinal, $|x|=\mu$ and  $T \cap \pkld{\mu}{x}$ is cofinal in $ \pkld{\mu}{x}$. \vspace{2mm}

We claim that $f[\pkld{\mu}{x}] \subseteq \pkld{\mu}{x}$. Let $z \in \pkld{\mu}{x}$, since $\mu$ is limit cardinal, we have that $z \cup |z|^+ \in \pkld{\mu}{x}$. Hence, since $|x|=\mu$, we have that  $z \cup |z|^+ \subseteq x_{\beta_{\alpha}}$ for some $\alpha < \mu$. Recall that $ x_{\beta_{\alpha}} \in  R_{\kappa}$. Then $|z|<|z|^+ \leq  x_{\beta_{\alpha}} \cap \kappa =  \mu_{\beta_{\alpha}}$, and so $z \in \pkld{ \mu_{\beta_{\alpha}}}{ x_{\beta_{\alpha}}}$. Then we have $f(z) \in f[\pkld{ \mu_{\beta_{\alpha}}}{ x_{\beta_{\alpha}}}]$. Moreover,  $f(z) \subseteq \bigcup f[\pkld{ \mu_{\beta_{\alpha}}}{ x_{\beta_{\alpha}}}] \subseteq x_{\beta_{\alpha}+1}\subseteq x$ and $|f(z)| \leq |x_{\beta_{\alpha}+1}| < \mu$. Therefore $f(z) \in \pkld{\mu}{x}$, and so $f[\pkld{\mu}{x}] \subseteq \pkld{\mu}{x}$.\vspace{2mm}

In summary, we have that  $x\in \pk{A}$ is such that $\mu := |x \cap \kappa| $ is a regular limit cardinal,  $f[\pkld{\mu}{x}] \subseteq \pkld{\mu}{x}$ and $y_0\subseteq x$. Hence $y_0 \subseteq x$, for $x \in T_f$. 
\end{proof}\vspace{1mm} 

\begin{corollary}
Let $\kappa$ be a Mahlo cardinal, and $f : \pk{A} \rightarrow \pk{A}$. Then $T_f$ is $1$-stationary in $\pk{A}$.
\end{corollary}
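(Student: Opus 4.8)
The plan is to combine Lemma \ref{l2} with Corollary \ref{l1}. Given a Mahlo cardinal $\kappa$ and a function $f : \pk{A} \rightarrow \pk{A}$, Lemma \ref{l2} tells us exactly two things about the set $T_f$: first, that $d_0(T_f) \subseteq T_f$, and second, that $T_f$ is cofinal in $\pk{A}$. Since $\kappa$ is Mahlo, it is in particular weakly Mahlo, so the hypotheses of Corollary \ref{l1} are met for the set $S := T_f$.

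First I would note that $\kappa$ Mahlo implies $\kappa$ weakly Mahlo. Then I would invoke Lemma \ref{l2}(2) to get that $T_f$ is cofinal in $\pk{A}$, and Lemma \ref{l2}(1) to get that $d_0(T_f) \subseteq T_f$. Applying Corollary \ref{l1} with $S = T_f$ immediately yields that $T_f$ is $1$-stationary in $\pk{A}$, which is the assertion to be proved. There is essentially no obstacle here; the corollary is a direct repackaging of the two conclusions of Lemma \ref{l2}, whose proof has already been carried out. The only point worth a sentence is confirming that "Mahlo" supplies the "weakly Mahlo" hypothesis that Corollary \ref{l1} requires, which is immediate from the definition (a Mahlo cardinal is a weakly Mahlo strong limit cardinal).

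Concretely, the proof would read: Since $\kappa$ is Mahlo, it is weakly Mahlo. By Lemma \ref{l2}, $T_f$ is cofinal in $\pk{A}$ and $d_0(T_f) \subseteq T_f$. Hence, by Corollary \ref{l1}, $T_f$ is $1$-stationary in $\pk{A}$. $\square$
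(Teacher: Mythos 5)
Your proof is correct and is essentially the paper's argument: the paper also derives the result from Lemma \ref{l2}, but instead of citing Corollary \ref{l1} it unfolds that corollary inline (taking an arbitrary cofinal $U$ and using Proposition \ref{d01q1st} to get $\varnothing \neq d_0(U)\cap d_0(T_f)\subseteq d_0(U)\cap T_f$). Invoking Corollary \ref{l1} directly, as you do, is a cleaner packaging of the same reasoning, and your check that Mahlo implies weakly Mahlo covers the only hypothesis that needs verifying.
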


 \begin{proof} 
 Let $\kappa$ be a Mahlo cardinal and let $U \subseteq \pk{A}$ be cofinal in $\pk{A}$. By Lemma \ref{l2}, we have, $T_f$ cofinal in $\pk{A}$ and $d_0(T_f)\subseteq T_f$. Then by Proposition \ref{d01q1st}, we have that $d_0(U)\cap d_0(T_f)$ is $1$-stationary in $\pk{A}$. Then  $\varnothing \neq d_0(U)\cap d_0(T_f)\subseteq d_0(U) \cap T_f$, whence $T_f$ is $1$-stationary in $\pk{A}$. 
 \end{proof}\vspace{1mm}

\begin{prop}\label{strsti1st}  
If $\kappa$ is Mahlo, then $S\subseteq \pk{A}$ is $1$-s-stationary in $\pk{A}$ if and only if $S\subseteq \pk{A}$ is $1$-stationary in $\pk{A}$, if and only if $S$ is strongly-stationary in $\pk{A}$. 
\end{prop}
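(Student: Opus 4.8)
The plan is to prove the two biconditionals in the order most convenient given the tools already established. Proposition \ref{1sstiff1st} already gives that $1$-s-stationarity and $1$-stationarity coincide, so the only new content is the equivalence of $1$-stationarity with strong stationarity under the Mahlo hypothesis. I would therefore present the proof as a cycle of implications: first show strongly-stationary $\Rightarrow$ $1$-stationary, then $1$-stationary $\Rightarrow$ strongly-stationary, and invoke Proposition \ref{1sstiff1st} for the third equivalence.

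For the direction strongly-stationary $\Rightarrow$ $1$-stationary, let $S$ be strongly-stationary and let $T\subseteq\pk{A}$ be cofinal. Using cofinality of $T$, one can cook up a function $f:\pk{A}\to\pk{A}$ such that any $x$ closed under $f$ (i.e.\ with $f[\pkld{\mu}{x}]\subseteq\pkld{\mu}{x}$ for $\mu=|x\cap\kappa|$) automatically has $T\cap\pkld{\mu}{x}$ cofinal in $\pkld{\mu}{x}$: concretely, for each $y\in\pk{A}$ pick some $f(y)\in T$ with $y\cup|y|^{+}\subseteq f(y)$ (exactly the kind of bookkeeping used in Lemma \ref{l2} and Proposition \ref{d01} (2)). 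Strong stationarity of $S$ then yields $x\in S$ with $\mu:=|x\cap\kappa|$ a regular limit cardinal and $f[\pkld{\mu}{x}]\subseteq\pkld{\mu}{x}$; the closure condition forces $T\cap\pkld{\mu}{x}$ cofinal in $\pkld{\mu}{x}$, so $x$ witnesses $1$-stationarity of $S$.

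For the direction $1$-stationary $\Rightarrow$ strongly-stationary, let $S$ be $1$-stationary and fix $f:\pk{A}\to\pk{A}$. Consider $T_f$ as in Lemma \ref{l2}. By Lemma \ref{l2}, $T_f$ is cofinal in $\pk{A}$ and $d_0(T_f)\subseteq T_f$. Since $S$ is $1$-stationary, $S\cap d_0(T_f)\neq\varnothing$, and by the inclusion $d_0(T_f)\subseteq T_f$ we get $x\in S\cap T_f$. Unwinding the definition of $T_f$, such an $x$ has $\mu:=|x\cap\kappa|$ a regular limit cardinal and $f[\pkld{\mu}{x}]\subseteq\pkld{\mu}{x}$, which is precisely what strong stationarity of $S$ demands. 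Finally, combining with Proposition \ref{1sstiff1st}, all three notions coincide.

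The main obstacle is the first direction: one must make sure the auxiliary function $f$ genuinely forces cofinality of $T\cap\pkld{\mu}{x}$ inside $\pkld{\mu}{x}$ whenever $x$ is closed under $f$ and $\mu=|x\cap\kappa|$ is a regular limit cardinal. The subtlety is the usual one in the $\pk{A}$ setting: given $y\in\pkld{\mu}{x}$ one needs the value $f(y)$ to still lie in $\pkld{\mu}{x}$ \emph{and} to dominate $y$; choosing $f(y)\supseteq y\cup|y|^{+}$ handles the former (using $\mu$ limit, so $|y|^{+}<\mu$ and hence $|f(y)|<\mu$ once $f(y)$ is also $\subseteq x$, which closure provides) and witnesses cofinality directly. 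Everything else is a routine assembly of results already in hand (Lemma \ref{l2}, Proposition \ref{d01q1st}, Proposition \ref{1sstiff1st}), so the write-up should be short.
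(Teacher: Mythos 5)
Your proposal is correct and follows essentially the same route as the paper: the paper also reduces the first equivalence to Proposition \ref{1sstiff1st}, proves $1$-stationary $\Rightarrow$ strongly-stationary by intersecting $S$ with $d_0(T_f)\subseteq T_f$ via Lemma \ref{l2}, and proves the converse by choosing $f(y)=x_y\in T$ with $y\subseteq x_y$ and using closure of $\pkld{\mu}{x}$ under $f$. Your extra padding of $f(y)$ by $|y|^{+}$ is harmless but unnecessary, since $f(y)\in\pkld{\mu}{x}$ already follows from the closure condition.
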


\begin{proof} 
Notice that, by proposition \ref{1sstiff1st} the first equivalence always holds. So let us prove that for $\kappa$ Mahlo, $S\subseteq \pk{A}$ is $1$-stationary in $\pk{A}$ if and only if $S$ is strongly-stationary in $\pk{A}$. \vspace{2mm}

$(\Rightarrow)$ Suppose that $S$ is $1$-stationary, and let $f : \pk{A} \rightarrow \pk{A}$. By Lemma \ref{l2}, $T_f$ is cofinal. Then $ \varnothing \neq S \cap d_0(T_f)  \subseteq S \cap T_f$. Therefore, there exists some $x \in  S \cap T_f$. That is, there is $x \in S$ such that $\mu:= |x \cap \kappa|$ is a regular limit cardinal and $ f[\pkld{\mu}{x}] \subseteq \pkld{\mu}{x}$. Hence, $S$ is strongly stationary in $\pk{A}$.\vspace{2mm}

$(\Leftarrow)$ Suppose now that $S$ is strongly stationary and let $T \subseteq \pk{A}$ be cofinal in $\pk{A}$. For each $y \in \pk{A}$, fix  $x_y \in T$ such that $y \subseteq x_y$, and define $f: \pk{A} \rightarrow \pk{A}$ such that $f(y):= x_y$. By strongly stationarity of $S$, there is some $x\in S $ such that $\mu:= |x \cap \kappa|$ is a regular limit cardinal and $ f[\pkld{\mu}{x}] \subseteq \pkld{\mu}{x}$. We claim that $T \cap \pkld{\mu}{x}$ is cofinal in $\pkld{\mu}{x}$. For if $y \in \pkld{\mu}{x}$, then $y \subseteq f(y) = x_y \in T$ and $f(y) \in f[\pkld{\mu}{x}] \subseteq \pkld{\mu}{x}$. This is, $x_y \in T \cap \pkld{\mu}{x}$ is such that $y \subseteq X_y$. 
\end{proof} \vspace{1mm} 

We now turn our attention to exploring higher stationarity in $\pk{A}$ for the cases $n \geq 1$.  \vspace{1mm}

 \begin{prop}\label{new20}  
 Let $n \geq 1$. If $S$ is $n$-stationary and $C$ is club in $\pk{A}$, then $C \cap S$ is  $n$-stationary in $\pk{A}$. 
 \end{prop}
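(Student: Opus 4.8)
The plan is to proceed by induction on $n \geq 1$. The base case $n=1$ is essentially already available: by Proposition~\ref{1stist} (combined with Proposition~\ref{ci1stpkl}) clubs are $1$-stationary, and the statement for $n=1$ says that the intersection of a $1$-stationary set with a club is $1$-stationary; I would derive this from Proposition~\ref{d01q1st} together with Lemma~\ref{lem0}~(3). Indeed, if $S$ is $1$-stationary then $\kappa$ is weakly Mahlo (Proposition~\ref{siwm}); given any cofinal $T$, Proposition~\ref{d01q1st} gives that $S \cap d_0(T)$ is $1$-stationary, hence nonempty, and by Proposition~\ref{d0cic} we have $d_0(T) \cap (C \cap S) \supseteq$ a nonempty set once we note $d_0(C\cap T)\subseteq d_0(C)\cap d_0(T)\subseteq C\cap d_0(T)$; so $C\cap S$ meets $d_0(T)$, which is exactly $1$-stationarity of $C\cap S$.

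For the inductive step, assume the result holds for all $m < n$ and let $S$ be $n$-stationary, $C$ club. To show $C \cap S$ is $n$-stationary, fix $m < n$ and a $T$ that is $m$-stationary in $\pk{A}$; I must produce $x \in C \cap S$ with $\mu := |x\cap\kappa|$ a regular limit cardinal and $T \cap \pkld{\mu}{x}$ $m$-stationary in $\pkld{\mu}{x}$. The key idea is to \emph{absorb the club $C$ into the reflecting set $T$}. Since $\kappa$ is weakly Mahlo (it is, because $S$ is in particular $1$-stationary, being $n$-stationary with $n\geq 1$, so Proposition~\ref{siwm} applies), and since $d_0(C) \subseteq C$ with $d_0(C)$ cofinal (Lemma~\ref{lem0}~(3), Proposition~\ref{ci1stpkl}), I would consider $T' := T \cap C$, or more precisely work with $d_{m}$ applied to suitable sets. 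First, $T \cap C$ is still $m$-stationary: this is where the inductive hypothesis feeds back in at the lower level, or alternatively one checks directly that for $T$ $m$-stationary and $C$ club, $T\cap C$ is $m$-stationary — which is precisely the statement of the proposition for $m<n$, available by induction. Now apply $n$-stationarity of $S$ to $T\cap C$: there is $x \in S$ with $\mu := |x \cap \kappa|$ a regular limit cardinal and $(T\cap C)\cap\pkld{\mu}{x}$ $m$-stationary in $\pkld{\mu}{x}$. Then $(T\cap C)\cap \pkld{\mu}{x} \subseteq T\cap\pkld{\mu}{x}$ gives condition (ii). It remains to see $x \in C$; for this I note that $(T\cap C)\cap\pkld{\mu}{x}$ being $m$-stationary implies it is cofinal in $\pkld{\mu}{x}$ (Lemma~\ref{nstimst}), so $C\cap\pkld{\mu}{x}$ is cofinal in $\pkld{\mu}{x}$, i.e.\ $x \in d_0(C) \subseteq C$ by Proposition~\ref{d0cic}. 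Hence $x \in C\cap S$ witnesses the required reflection, and $C\cap S$ is $n$-stationary.

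The main obstacle I anticipate is the interplay at the base of the induction: one must be careful that "$T\cap C$ is $m$-stationary whenever $T$ is $m$-stationary and $C$ is club" is genuinely what the inductive hypothesis provides (the proposition itself at level $m$, with $C$ in the role of the club and $T$ — which need not be $m$-stationary... wait, $T$ \emph{is} $m$-stationary here — in the role of the $m$-stationary set), so the induction is well-founded. A secondary subtlety is verifying that $x\in d_0(C)$: this uses that $m$-stationarity downgrades all the way to $0$-stationarity (cofinality) via Lemma~\ref{nstimst}, and then Proposition~\ref{d0cic} to land inside $C$ — both already in hand. Everything else is bookkeeping with the monotonicity facts $d_n(X)\subseteq d_m(X)$ and $d_n(X\cap Y)\subseteq d_n(X)\cap d_n(Y)$ from Lemma~\ref{lem0}.
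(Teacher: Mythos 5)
Your argument is correct and follows essentially the same route as the paper's own proof: the base case via Proposition~\ref{d01q1st} together with $d_0(C)\subseteq C$ (Proposition~\ref{d0cic}), and the inductive step by applying the $n$-stationarity of $S$ to $C\cap T$ (which is $m$-stationary by the induction hypothesis) and landing back inside $C$ via $d_m(C\cap T)\subseteq d_0(C\cap T)\subseteq d_0(C)\subseteq C$. The one soft spot you yourself flagged is real but shared with the paper's write-up: the induction hypothesis only covers $1\leq m<n$, and for $m=0$ the claim ``$C\cap T$ is cofinal whenever $T$ is cofinal and $C$ is club'' is false (a cofinal set can miss a club entirely), so the $m=0$ case of the inductive step must be handled exactly as in your base case, i.e.\ by observing $\varnothing\neq S\cap d_0(C)\cap d_0(T)\subseteq S\cap C\cap d_0(T)$ from Proposition~\ref{d01q1st} and Proposition~\ref{d0cic} rather than by intersecting $T$ with $C$.
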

 
\begin{proof}  
Since $n \geq 1$,  $\pk{A}$ is $n$-stationary  implies $\kappa$ is weakly Mahlo. We proceed by induction on $n$. For $n=1$, suppose $S$ is $1$-stationary and  $C$ is club in $\pk{A}$. Notice that $C$ is in particular cofinal. Then by Proposition \ref{d01q1st}, $S\cap d_0(C)$ is $1$-stationary in $\pk{A}$. By Proposition \ref{d0cic} we have that  $S\cap d_0(C) \subseteq S \cap C$. Then  $S \cap C$ is also $1$-stationary in $\pk{A}$. \vspace{2mm} 

Now, assume  the proposition holds for all $m$ such that $1 \leq m < n$. Suppose  $\pk{A}$ is $n$-stationary and  $C$ is club in $\pk{A}$. Let $m < n$ and  let $T$ be $m$-stationary in $\pk{A}$. By induction hypothesis, we have that $C \cap T$ is  $m$-stationary in $\pk{A}$. Then $$\varnothing \neq S \cap d_{m}(C \cap T)\subseteq  S \cap d_{m}(C) \cap d_{m}(T) \subseteq  S \cap C \cap d_{m}(T)$$ 
where in the last  inclusions we used Lemma  \ref{lem0} (1) and (3). Hence, we have $ \varnothing \neq S \cap C \cap d_{m}(T)$, and so $C \cap S$ is $n$-stationary in $\pk{A}$. 
\end{proof} \vspace{1mm} 

\begin{corollary}
Let $n < \omega$.  If $\pk{A}$ is $n$-stationary  and $C$ is club in $\pk{A}$, then $C$ is also $n$-stationary. $\square$  \vspace{1mm}
\end{corollary}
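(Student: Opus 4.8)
The plan is to deduce this immediately from Proposition \ref{new20}, which already does essentially all the work. The only thing to observe is that $\pk{A}$ itself is a subset of $\pk{A}$, so we may apply the proposition with $S := \pk{A}$.

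First I would dispatch the degenerate case $n = 0$ separately, since Proposition \ref{new20} is stated only for $n \geq 1$. When $n = 0$, being $0$-stationary in $\pk{A}$ means being cofinal in $\pk{A}$, and a club set is cofinal by definition; hence $C$ is $0$-stationary with no hypothesis on $\kappa$ needed.

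For $n \geq 1$, I would apply Proposition \ref{new20} with $S = \pk{A}$: since $\pk{A}$ is $n$-stationary in $\pk{A}$ and $C$ is club in $\pk{A}$, the proposition yields that $C \cap \pk{A} = C$ is $n$-stationary in $\pk{A}$, which is exactly the claim.

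There is essentially no obstacle here; the content is entirely in Proposition \ref{new20}, and this corollary is just the instantiation $S = \pk{A}$ together with the trivial observation for $n = 0$. The only thing one must be slightly careful about is not to forget that the $n = 0$ case is not covered by the cited proposition, so it should be mentioned explicitly even though it is immediate from the definition of club.
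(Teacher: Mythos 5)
Your proof is correct and matches the paper's (implicit) argument: the corollary is stated with a $\square$ precisely because it is the instantiation $S = \pk{A}$ of Proposition \ref{new20}. Your explicit handling of the $n=0$ case (club implies cofinal, i.e.\ $0$-stationary) is a sensible bit of extra care, since the cited proposition is stated only for $n \geq 1$.
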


\begin{lemma}\label{lem1}  
For every $X,Y\subseteq \pk{A}$ and every $m\leq n$,
 \begin{enumerate}
\item[(1)]  $d_n^s(X \cap Y)\subseteq d_n^s(X) \cap d_n^s(Y)$.  			
\item[(2)]  $d_n^s(X)\subseteq d_m^s(X)$. 
\item[(3)]  $d_n^s(d_m^s(X))\subseteq d_m^s(X)$.
\item[(4)]  If $\kappa$ is weakly Mahlo and $C\subseteq \pk{A}$ club, then $d_n^s(C)\subseteq C$.
\end{enumerate}
\end{lemma}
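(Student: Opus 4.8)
The plan is to deal with (1), (2) and (4) in a line each and put the real work into (3). Item (1) is immediate from Definition \ref{nsst}: if $x\in d_n^s(X\cap Y)$, then $\mu:=|x\cap\kappa|$ is a regular limit cardinal and $(X\cap Y)\cap\pkld{\mu}{x}$, being $n$-s-stationary in $\pkld{\mu}{x}$ and contained in each of $X\cap\pkld{\mu}{x}$ and $Y\cap\pkld{\mu}{x}$, forces both of the latter to be $n$-s-stationary in $\pkld{\mu}{x}$, i.e. $x\in d_n^s(X)\cap d_n^s(Y)$. Item (2) is the second assertion of Lemma \ref{nstimst}. For item (4): if $\kappa$ is weakly Mahlo and $C$ is club, then by (2) $d_n^s(C)\subseteq d_0^s(C)$, and $d_0^s(C)=d_0(C)\subseteq C$ by Proposition \ref{d0cic}; hence $d_n^s(C)\subseteq C$. (One can equally induct on $n$ as in the proof of Lemma \ref{lem0} (3).)

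For (3) I would first reduce the problem. By (2), $d_n^s(d_m^s(X))\subseteq d_m^s(d_m^s(X))$, so it suffices to prove $d_m^s(d_m^s(X))\subseteq d_m^s(X)$. Let $x$ lie in the left-hand side, set $\mu:=|x\cap\kappa|$ (a regular limit cardinal) and $Y:=X\cap\pkld{\mu}{x}$. Using the localization identity relating $d_m^s$ computed inside $\pkld{\mu}{x}$ (with $\mu$ in the role of $\kappa$) to $d_m^s$ computed in $\pk{A}$, the hypothesis $x\in d_m^s(d_m^s(X))$ says precisely that $d_m^s(Y)$ is $m$-s-stationary in $\pkld{\mu}{x}$, while the desired conclusion $x\in d_m^s(X)$ says precisely that $Y$ is $m$-s-stationary in $\pkld{\mu}{x}$. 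Thus (3) reduces to the localizable claim: \emph{if $d_m^s(Y)$ is $m$-s-stationary in a space of the form $\pkld{\mu}{x}$, then so is $Y$.}

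To prove the claim: for $m=0$ this is exactly the computation in the proof of Proposition \ref{d01} (2) (with $T$ there replaced by $Y$; that argument uses only that the relevant cardinal is a limit cardinal, not weak Mahloness). For $m\geq 1$, fix $l<m$ and $T_1,T_2$ $l$-s-stationary in the ambient $\pkld{\mu}{x}$; I must produce $x'\in Y$ witnessing level-$l$ reflection of $T_1,T_2$. Applying the $m$-s-stationarity of $d_m^s(Y)$ to the pair $T_1,T_2$ yields some $z\in d_m^s(Y)$ with $\gamma:=|z\cap\mu|$ a regular limit cardinal such that $T_1\cap\pkld{\gamma}{z}$ and $T_2\cap\pkld{\gamma}{z}$ are $l$-s-stationary in $\pkld{\gamma}{z}$ and, since $z\in d_m^s(Y)$, also $Y\cap\pkld{\gamma}{z}$ is $m$-s-stationary in $\pkld{\gamma}{z}$. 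Now apply the $m$-s-stationarity of $Y\cap\pkld{\gamma}{z}$ inside $\pkld{\gamma}{z}$ to that same pair (legitimate as $l<m$): this produces $x'\in Y\cap\pkld{\gamma}{z}$ with $\nu:=|x'\cap\gamma|$ a regular limit cardinal and $T_1\cap\pkld{\nu}{x'}$, $T_2\cap\pkld{\nu}{x'}$ $l$-s-stationary in $\pkld{\nu}{x'}$, using $\pkld{\nu}{x'}\subseteq\pkld{\gamma}{z}$ to see that the restrictions coincide. Since $\pkld{\gamma}{z}\subseteq\pkld{\mu}{x}$ and $x'\cap\mu=x'\cap\gamma$, this $x'$ is the required witness; as $l$ and $T_1,T_2$ were arbitrary, $Y$ is $m$-s-stationary in $\pkld{\mu}{x}$, establishing the claim and hence (3).

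The main obstacle is the $m\geq 1$ case of the claim — a ``two-step reflection'': one cannot read off a genuine member of $Y$ directly from $m$-s-stationarity of $d_m^s(Y)$, so one must first reflect the test pair $T_1,T_2$ into a smaller copy $\pkld{\gamma}{z}$ on which $Y$ is already $m$-s-stationary, and only then extract an honest element of $Y$. The accompanying bookkeeping — keeping straight which of $x\cap\kappa$, $z\cap\mu$, $x'\cap\gamma$ plays the role of ``$\kappa$'' at each stage, all of them regular limit cardinals, together with the nesting $\pkld{\nu}{x'}\subseteq\pkld{\gamma}{z}\subseteq\pkld{\mu}{x}$ and the localization identity of Section 3 — is routine but is where the structure of Definition \ref{nsst} is actually used.
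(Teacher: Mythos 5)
Your proof is correct and follows essentially the same route as the paper: (1), (2) and (4) are dispatched exactly as there, and your ``two-step reflection'' for (3) with $m\geq 1$ (reflect the test pair $T_1,T_2$ to some $z\in d_m^s(X)\cap\pkld{\mu}{x}$, then use $m$-s-stationarity of $X\cap\pkld{\gamma}{z}$ on the restricted pair to extract an honest witness in $X$) is precisely the paper's argument, while $m=0$ is likewise handled via the computation of Proposition \ref{d01}(2). The only cosmetic difference is that you first reduce $n$ to $m$ via item (2), whereas the paper keeps $n$ and simply only ever invokes the $n$-s-stationarity at levels $l<m$ — the same content.
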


\begin{proof} 
(1) follows immediate from the definition. (2) is Lemma \ref{nstimst} (2). We prove (4) by induction on $n$. Suppose $\kappa$ is weakly Mahlo and  $C$ is club in $\pk{A}$. Then the case $n=0$ is  Proposition \ref{d0cic}. And if we assume $d^s_n(C)\subseteq C$, then applying  item (2) to $C$ we obtain that $d_{n+1}^s(C) \subseteq d_n^s(C)\subseteq C$. \vspace{2mm}

Now, let us prove (3) by induction on $n$. The case $n=0$ follows from Proposition \ref{d01} (2). The case $n=1$ is immediate, using the remark below Proposition  \ref{1sstiff1st}. So, suppose $n\geq 2$. If $m=0$,  by (2) we get,  $$d_n^s (d_0^s(X))\subseteq d_0^s(d_0^s(X)) \subseteq d_0^s(X)$$ and we are done. Thus, consider $m \geq 1$. Let $x \in d_n^s(d_m^s(X))$, this is $\mu:= |x \cap \kappa|$ is a regular limit cardinal and $d_m^s(X) \cap \pkld{\mu}{x}$ is $n$-s-stationary in $\pkld{\mu}{x}$. We want to prove that $x \in d_m^s(X)$. For this, take $l<m$ and $T_1,T_2$  $l$-s-stationary in  $\pkld{\mu}{x}$. By $n$-s-stationarity of  $d_m^s(X) \cap \pkld{\mu}{x}$, there is some $y \in d_m^s(X) \cap \pkld{\mu}{x}$ such that $\gamma:=|y\cap \kappa|$ is a regular limit cardinal and $T_1 \cap \pkld{\gamma}{y}$ and $T_2 \cap \pkld{\gamma}{y}$   are $l$-stationary in $\pkld{\gamma}{y}$. From $y \in d_m^s(X)$ we have that $X \cap \pkld{\gamma}{y}$ is $m$-s-stationary in $\pkld{\gamma}{y}$. Then, since $l<m$ we have:  
\begin{eqnarray*}
\varnothing &\neq& X \cap \pkld{\gamma}{y} \cap d_l ^s {\big(}\pq{T_1 \cap \pkld{\gamma}{y}}{\big)} \cap d_l^s {\big(}\pq{T_2 \cap \pkld{\gamma}{y}}{\big)} \\ & \subseteq  &X \cap \pkld{\gamma}{y} \cap d_l^s(T_1) \cap d_l^s \big( \pq{\pkld{\gamma}{y}} \big) \cap d_l^s(T_2) \cap d_l^s \big( \pq{\pkld{\gamma}{y}}\big) \\ &\subseteq & X \cap d_l^s(T_1) \cap d_l^s(T_2)
\end{eqnarray*}
where in the first   inclusion we used part (1) of the lemma. Then $\varnothing \neq X \cap d_l^s(T_1) \cap d_l^s(T_2)$, and so $X$ is $m$-s-stationary  in  $\pkld{\mu}{x}$. Hence, $x\in d_0(X)$. 
\end{proof} \vspace{1mm}

 \begin{prop}\label{new2} 
 Let $n \geq 1$. If $S$ is $n$-s-stationary and $C$ is club in $\pk{A}$, then $C \cap S$ is also $n$-s-stationary in $\pk{A}$.  
 \end{prop}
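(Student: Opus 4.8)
The plan is to mirror the inductive proof of Proposition \ref{new20}, replacing every object and cited result by its simultaneous analogue: Lemma \ref{lem1} in place of Lemma \ref{lem0}, and, in place of Propositions \ref{d0cic} and \ref{d01q1st}, the simultaneous versions recorded as item (4) of the list following Proposition \ref{1sstiff1st} together with Proposition \ref{d0cic} itself, which already states $d_0^s(C)=d_0(C)\subseteq C$. First note that, since $n\geq 1$, $n$-s-stationarity of $S$ gives (Lemma \ref{nstimst}) that $S$ is $1$-s-stationary, hence $1$-stationary by Proposition \ref{1sstiff1st}, hence $\kappa$ is weakly Mahlo by Proposition \ref{siwm}; this validates every later appeal to Proposition \ref{d0cic} and Lemma \ref{lem1}(4).

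I would then proceed by strong induction on $n\geq 1$. Let $S$ be $n$-s-stationary, $C$ club, fix $m<n$ and let $T_1,T_2\subseteq\pk{A}$ be $m$-s-stationary in $\pk{A}$; it suffices to show $(C\cap S)\cap d_m^s(T_1)\cap d_m^s(T_2)\neq\varnothing$. If $m\geq 1$, the induction hypothesis applied to $T_1$ and to $T_2$ (each $m$-s-stationary, with $m<n$) yields that $C\cap T_1$ and $C\cap T_2$ are $m$-s-stationary; applying $n$-s-stationarity of $S$ to this pair, together with Lemma \ref{lem1}(1) and the inclusion $d_m^s(C)\subseteq C$ from Lemma \ref{lem1}(4), gives
$$\varnothing\neq S\cap d_m^s(C\cap T_1)\cap d_m^s(C\cap T_2)\subseteq S\cap d_m^s(C)\cap d_m^s(T_1)\cap d_m^s(T_2)\subseteq (C\cap S)\cap d_m^s(T_1)\cap d_m^s(T_2).$$
If $m=0$, the induction hypothesis does not apply, so I would argue directly as in the case $n=1$: $S$ is $1$-s-stationary and $C,T_1,T_2$ are cofinal, so item (4) of the list following Proposition \ref{1sstiff1st} makes $S\cap d_0^s(C)\cap d_0^s(T_1)\cap d_0^s(T_2)$ a $1$-s-stationary, hence nonempty, subset of $(C\cap S)\cap d_0^s(T_1)\cap d_0^s(T_2)$, using $d_0^s(C)\subseteq C$. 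In either case $C\cap S$ is $n$-s-stationary, completing the induction.

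The one delicate point is the separation of the case $m=0$ from the cases $m\geq 1$: for $m\geq 1$ the induction hypothesis is precisely what supplies the $m$-s-stationarity of $C\cap T_i$, but for $m=0$ one would need $C\cap T_i$ to be cofinal, which may fail outright (a club and a merely cofinal set need not meet at all), so that instance must be reduced to the base-case machinery rather than to the induction hypothesis. Apart from isolating this subcase, the argument is a routine transcription of the proof of Proposition \ref{new20} into the simultaneous-stationary setting.
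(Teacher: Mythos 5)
Your proposal is correct and follows essentially the same route as the paper: induction on $n$, applying the inductive hypothesis to $C\cap T_i$, then using Lemma \ref{lem1}(1) and (4) to pull $C$ out of $d_m^s(C\cap T_i)$. In fact you are slightly more careful than the paper's own write-up, which applies the induction hypothesis for all $m\leq n$ without flagging that the $m=0$ instance (where $C\cap T_i$ need not be cofinal) must instead be routed through the $1$-s-stationarity machinery, exactly as you do.
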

 
\begin{proof} 
Since $n \geq 1$,  $\pk{A}$ is $n$-s-stationary  implies $\kappa$ is weakly Mahlo. We proceed by induction on $n$. The case $n=1$ follows from Proposition \ref{new20}. So, assume  the proposition holds for all $m$ such that $n >m\geq 1$. Let $m \leq n$ and  $T_1,T_2$ $m$-s-stationary in $\pk{A}$. By induction hypothesis, $C \cap T_1$ and $C \cap T_2$ are $m$-s-stationary in $\pk{A}$. Then by $n$-stationarity of $S$, we get  $$\varnothing \neq S \cap d_{m}^s(C \cap T_1)\cap d_{m}^s(C \cap T_2) \subseteq S \cap C \cap d_{m}^s(T_1)\cap d_{m}^s(T_2).$$ In the last  inclusion we used Lemma  \ref{lem1} (1) and (4). Then $ \varnothing \neq S \cap C \cap d_{m}^s(T_1) \cap d_{m}^s(T_2)$, and so $C \cap S$ is $n$-s-stationary in $\pk{A}$. 
\end{proof}\vspace{1mm}

\begin{corollary}
Let $1 \leq n < \omega$.  If $\pk{A}$ is $n$-s-stationary  and $C$ is club in $\pk{A}$, then $C$ is also $n$-s-stationary. $\square$\vspace{1mm}
\end{corollary}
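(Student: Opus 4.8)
The plan is to read off this corollary as an immediate specialisation of Proposition \ref{new2}. The key observation is that $\pk{A}$ is itself a subset of $\pk{A}$, so the hypothesis ``$\pk{A}$ is $n$-s-stationary in $\pk{A}$'' is precisely the statement that $S:=\pk{A}$ is an $n$-s-stationary subset of $\pk{A}$. First I would note that the standing assumption $1\leq n<\omega$ and the hypothesis that $C$ is club in $\pk{A}$ are exactly the hypotheses required by Proposition \ref{new2}. Applying that proposition with $S=\pk{A}$ and with the given club $C$, we obtain that $\pk{A}\cap C$ is $n$-s-stationary in $\pk{A}$. Since $C\subseteq \pk{A}$ we have $\pk{A}\cap C=C$, and therefore $C$ is $n$-s-stationary in $\pk{A}$, which is what we wanted.

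There is essentially no obstacle to overcome: the entire combinatorial content has already been extracted in Proposition \ref{new2}, whose proof proceeds by induction on $n$, using Proposition \ref{new20} for the base case $n=1$ and Lemma \ref{lem1} (1) and (4) together with the $n$-s-stationarity of $S$ for the inductive step. One could instead give a standalone induction on $n$ that mirrors the proof of Proposition \ref{new2} with $S$ fixed to be $\pk{A}$ throughout, but this would merely reproduce the same argument verbatim; hence the one-line deduction above is the natural proof, and this is why the statement is marked with $\square$.
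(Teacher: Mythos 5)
Your proposal is correct and matches the paper's intended argument: the corollary is exactly Proposition \ref{new2} specialised to $S=\pk{A}$, using $C\cap\pk{A}=C$, which is why the paper marks it with $\square$ and gives no separate proof.
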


\begin{prop}\label{eqc} 
Let $n < \omega$, $\kappa$ a regular limit cardinal and $\kappa \subseteq A$. Then,  
\begin{enumerate}
\item[(1)] If $S\subseteq \pk{A}$ is $n$-s-stationary and $X_i\subseteq \pk{A}$ is $m_i$-s-stationary for $m_i< n$ and $i \in \{0,...,q-1\}$, then $S \cap d^s_{m_0}(X_{0}) \cap \cdots \cap d^s_{m_{q-1}}(X_{q-1})$ is $n$-s-stationary in $\pk{A}$. 
\item[(2)] If $S\subseteq \pk{A}$ is $(n+1)$-s-stationary in $\pk{A}$, then, for all $m \leq n$ and all $T_1,T_2\subseteq \pk{A}$ $m$-s-stationary in $\pk{A}$, $S \cap d_m^s(T_1) \cap d_m^s(T_2)$ is $m$-s-stationary in $\pk{A}$. 
\end{enumerate}
\end{prop}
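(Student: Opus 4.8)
The plan is to get part~(2) for free from part~(1) and to prove part~(1) by induction on $n$, the whole argument hinging on a single application of the $n$-s-stationarity of $S$ at the level $\eta:=\max\{l,m_0,\dots,m_{q-1}\}$, where $l<n$ is the level at which reflection must be witnessed. For part~(2), apply part~(1) with $n+1$ in place of $n$, taking $q=2$, $X_0=T_1$, $X_1=T_2$, $m_0=m_1=m$ (so $m\le n<n+1$): this gives that $S\cap d_m^s(T_1)\cap d_m^s(T_2)$ is $(n+1)$-s-stationary, hence $m$-s-stationary by Lemma~\ref{nstimst}. For part~(1), the base case $n=1$ forces every $m_i=0$, i.e.\ every $X_i$ cofinal, and the assertion is then exactly the $1$-s-stationary instance of Proposition~\ref{d01q1st} (using $d_0=d_0^s$ and Proposition~\ref{1sstiff1st}).

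For the inductive step, assume part~(1) for all parameters $<n$ (hence also part~(2) for all parameters $<n-1$, by the reduction just given). Fix $l<n$ and $l$-s-stationary sets $T_1,T_2$; I want $x\in S\cap\bigcap_{i<q}d_{m_i}^s(X_i)$ with $\mu:=|x\cap\kappa|$ a regular limit cardinal and $T_1\cap\pkld{\mu}{x},T_2\cap\pkld{\mu}{x}$ both $l$-s-stationary in $\pkld{\mu}{x}$, which amounts to showing
\[
S\cap\bigcap_{i<q}d_{m_i}^s(X_i)\cap d_l^s(T_1)\cap d_l^s(T_2)\neq\varnothing .
\]
Set $\eta:=\max\{l,m_0,\dots,m_{q-1}\}<n$ and $V_j:=\bigcap_{i<q}d_{m_i}^s(X_i)\cap d_l^s(T_j)$ for $j=1,2$. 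Distributing $d_\eta^s$ over this finite intersection with Lemma~\ref{lem1}(1) and then applying Lemma~\ref{lem1}(3) to each factor (legitimate because $m_i\le\eta$ and $l\le\eta$) yields $d_\eta^s(V_j)\subseteq V_j$, so
\[
d_\eta^s(V_1)\cap d_\eta^s(V_2)\subseteq V_1\cap V_2=\bigcap_{i<q}d_{m_i}^s(X_i)\cap d_l^s(T_1)\cap d_l^s(T_2).
\]
Hence, as $\eta<n$ and $S$ is $n$-s-stationary, it suffices to prove that $V_1$ and $V_2$ are $\eta$-s-stationary.

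To see this, group the factors of each $V_j$ into the \emph{low} ones $d_k^s(Z)$ with $Z$ $k$-s-stationary and $k<\eta$, and the \emph{top} ones $d_\eta^s(Z)$ with $Z$ $\eta$-s-stationary, and let $W$ be the intersection of the top factors. If $\eta\le n-2$, then $\pk{A}\supseteq S$ is $(\eta+1)$-s-stationary, so part~(1) for the parameter $\eta+1$ (induction hypothesis, applied with ambient set $\pk{A}$) gives that $W$ is $(\eta+1)$-s-stationary, hence $\eta$-s-stationary; then part~(1) for the parameter $\eta$ absorbs the low factors and shows $V_j=W\cap(\text{low factors})$ is $\eta$-s-stationary. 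There remains the case $\eta=n-1$, where the crux is the following claim: \emph{if $\pk{A}$ is $n$-s-stationary and $Y_1,\dots,Y_r$ are $(n-1)$-s-stationary, then $W:=\bigcap_{j\le r}d_{n-1}^s(Y_j)$ is $(n-1)$-s-stationary.} Granting the claim, part~(1) for the parameter $n-1$ again absorbs the low factors and finishes the induction.

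I expect the claim to be the main obstacle, precisely because it must be proved \emph{without} circularly invoking part~(1) for the parameter $n$. My intended route is a two-cardinal, higher-level analogue of the classical fact that the pairwise intersection of the stationary traces of stationary-reflecting sets is again stationary: to verify that $W$ reflects a given pair $U_1,U_2$ of $\zeta$-s-stationary sets with $\zeta<n-1$, replace each $Y_j$ by its intersection with a suitable club (using Proposition~\ref{new2} to keep it $(n-1)$-s-stationary), reflect two such sets at level $n-1$ using the $n$-s-stationarity of $\pk{A}$, and note that the resulting witness $x$ automatically lies in every $d_{n-1}^s(Y_j)$ and, by Lemma~\ref{lem1}(3), in $W$. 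The delicate point — where I anticipate the real effort — is to arrange, by propagating that single level-$(n-1)$ reflection downward through the induction hypothesis inside $\pkld{\mu}{x}$ and iterating on $\zeta=0,1,\dots,n-2$, that $x$ can simultaneously be taken in $d_\zeta^s(U_1)\cap d_\zeta^s(U_2)$ while staying inside the whole finite intersection $W$. Everything else (Lemma~\ref{lem1}(1),(3), Proposition~\ref{new2}, Proposition~\ref{d01q1st}, Lemma~\ref{nstimst}) is routine.
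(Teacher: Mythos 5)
Your overall architecture is sound: the reduction of (2) to (1) at the next level is correct (and even gives a slightly stronger conclusion than the paper's), the base case is right, and the reduction of the inductive step to showing that $V_1$ and $V_2$ are $\eta$-s-stationary, via $d_\eta^s(V_j)\subseteq V_j$ from Lemma \ref{lem1}(1),(3), is exactly the right move. But the proof is not complete: everything funnels into your claim for the case $\eta=n-1$, and that claim is only conjectured, not proved. Moreover, the route you sketch for it fails as stated. A single application of the $n$-s-stationarity of $\pk{A}$ reflects only a \emph{pair} of $(n-1)$-s-stationary sets, so the resulting witness $x$ lands in $d_{n-1}^s(Z_1)\cap d_{n-1}^s(Z_2)$ for the two sets you chose to reflect; it does not ``automatically lie in every $d_{n-1}^s(Y_j)$'' when $r>2$, and in your application $r$ genuinely can exceed $2$ (all the $m_i$ may equal $\eta=n-1$). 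The mechanism you describe for getting $x$ into $d_\zeta^s(U_1)\cap d_\zeta^s(U_2)$ (``iterating on $\zeta$'') is also not the right one: the correct move is to pre-intersect, i.e.\ use part (1) at level $n-1$ (your induction hypothesis, since $\zeta<n-1$) to see that $Y_j\cap d_\zeta^s(U_1)\cap d_\zeta^s(U_2)$ is still $(n-1)$-s-stationary, reflect \emph{that} pair, and then push the witness down with Lemma \ref{lem1}(1),(3). With that modification the claim holds for $r=2$, and the general case needs a further sub-induction on $r$: reflect the pair $W_{r-1}\cap d_\zeta^s(U_1)\cap d_\zeta^s(U_2)$ and $Y_r\cap d_\zeta^s(U_1)\cap d_\zeta^s(U_2)$, where $W_{r-1}:=\bigcap_{j\le r-1}d_{n-1}^s(Y_j)$, and use $d_{n-1}^s(W_{r-1})\subseteq W_{r-1}$. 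None of this is in your write-up, and it is precisely the hard part.

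Note also that your claim is exactly an instance of part (2) (take $S=\pk{A}$ and $m=n-1$ in the statement of (2)), so your attempt to decouple (1) from (2) ends up needing (2) anyway. The paper avoids the issue by running a genuinely simultaneous induction: (1) at level $n+1$ is proved by a sub-induction on $q$ whose $q=1$ case splits on the comparison of $m_0$ with $l$ and invokes (2) at level $n$ to absorb the top-level factors, and (2) at level $n+1$ is then deduced from (1) at level $n+1$. Until your claim is actually proved, the proposal has a genuine gap at its central step.
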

 
\begin{proof} 
We will prove (1) and (2)  by simultaneously  induction on $n<\omega$:
\begin{enumerate}
\item[(n=0)] (1) Nothing to prove, because there is no $m_i<0$.\vspace{2mm}\\ 
(2) Suppose $S\subseteq \pk{A}$ is $1$-s-stationary in $\pk{A}$, and let $T_1,T_2\subseteq \pk{A}$ be 	cofinal in $\pk{A}$. Then $\kappa$ is weakly Mahlo, and by Proposition \ref{d01q1st}  $S \cap d^s_0(T_1) \cap d^s_0(T_2)$ is  $1$-s-stationary in $\pk{A}$. In particular, $S \cap d^s_0(T_1) \cap d^s_0(T_2)$ is cofinal in $\pk{A}$. \vspace{2mm}
	
\item[(n=1)] (1) Suppose $S\subseteq \pk{A}$ is $1$-s-stationary in $\pk{A}$, and $X_i\subseteq \pk{A}$ is cofinal in $\pk{A}$ for $i \in \{0,...,q-1\}$. Then  $\kappa$ is weakly Mahlo, and by Proposition \ref{d01q1st}  $S \cap d^s_{0}(X_{0}) \cap \cdots \cap d^s_{0}(X_{q-1})$ is $1$-s-stationary in $\pk{A}$.\vspace{2mm}\\ 
(2) Since $n=1$, we  consider the two possible cases for $m \leq n$ separately: \vspace{2mm}

(m=0): Suppose $S\subseteq \pk{A}$ is $2$-s-stationary in $\pk{A}$, and let $T_1,T_2 \subseteq 	\pk{A}$ be cofinal in $\pk{A}$. In particular, $S$ is $1$-s-stationary. Thus, by Proposition \ref{d01q1st}, we have  $S \cap d^s_0(T_1) \cap d^s_0(T_2)$ is cofinal in $\pk{A}$.\vspace{2mm}

\ (m=1): Suppose $S\subseteq \pk{A}$ is $2$-s-stationary in $\pk{A}$, and let $T_1,T_2\subseteq \pk{A}$ be $1$-s-stationary in $\pk{A}$. Let $U$ be cofinal in $\pk{A}$, we will prove that $d^s_0(U) \cap S \cap d^s_1(T_1) \cap d^s_1(T_2) \neq \varnothing$. Since $\pk{A}$ is $2$-s-stationary, $\kappa$ is weakly Mahlo. Then by Proposition \ref{d01q1st}, we have that  $d^s_0(U) \cap 	T_1$ and $d^s_0(U) \cap T_2$  are both $1$-stationary in $\pk{A}$. Now, by {2-s}-stationarity of $S$, 
\begin{minipage}{\linewidth}   
\begin{eqnarray*} 
\varnothing& \neq &S \cap {d^s_1\big(} \pq{d^s_0(U) \cap T_1}{\big)} \cap {d^s_1\big(} \pq{d^s_0(U) \cap T_2}{\big)} \\ & \subseteq&  S \cap d^s_1\big(d^s_0(U)\big)  \cap d^s_1(T_1) \cap d^s_1\big( d^s_0(U)\big) \cap d^s_1(T_2) \\ & \subseteq & S \cap d^s_0(U) \cap d^s_1(T_1) \cap d^s_0(U) \cap d^s_1(T_2).
\end{eqnarray*} 
\end{minipage}\vspace{2mm}
where in the first inclusion we used Lemma \ref{lem1} (1), and in the last one Lemma \ref{lem1} (3). Then $\varnothing \neq d^s_0(U)  \cap S \cap d^s_1(T_1)  \cap d^s_1(T_2)$, and so $S \cap d^s_1(T_1)  \cap d^s_1(T_2)$ is $1$-stationary. By proposition \ref{1sstiff1st}, this is, $S \cap d^s_1(T_1)  \cap d^s_1(T_2)$ is $1$-s-stationary in $\pk{A}$.
\end{enumerate}\vspace{1mm}
Suppose now that  $(1)$ and $(2)$ hold for $n$, and let us prove they hold for $n+1$:
 \begin{itemize}
\item[(n+1)]   (1)  Suppose that $S\subseteq \pk{A}$ is $(n+1)$-s-stationary in $\pk{A}$ and that for all  $i \in \{0,...,q-1\}$, $X_i\subseteq \pk{A}$ is $m_i$-s-stationary for $m_i< n+1$. We will proceed by induction on $q$. \vspace{2mm}

\begin{enumerate}	
\item[(q=1)] Let $l<n+1$ and let $T_1,T_2$ be $l$-s-stationary  in $\pk{A}$. We need to prove that $S \cap d^s_{m_0} (X_{0}) \cap d^s_{l}(T_1) \cap d^s_{l}(T_2)  \neq \varnothing$. Notice that $m_0,l \leq n$, then we will consider three cases; \vspace{2mm}
		 
($m_0>l$): $X_0$ is $m_0$-s-stationary  in $\pk{A}$ and $m_0\leq n$. Then by induction hypothesis on (2) we have, $S\cap d^s_{m_0}(X_0)$ is $m_0$-s-stationary in $\pk{A}$. Now, since $l < m_0$ and $T_1,T_2$ are $l$-s-stationary in $\pk{A}$, we have

\begin{minipage}{\linewidth}    
\begin{eqnarray*} 
S \cap d^s_{m_0} (X_{0}) \cap d^s_{l}(T_1) \cap d^s_{l}(T_2)  \neq \varnothing.
\end{eqnarray*}
\end{minipage}\vspace{2mm}

($m_0=l$): Note that $l \leq n$, then by induction hypothesis on (2) we have, $S\cap d^s_{l}(T_1) \cap d^s_{l}(T_2) $ is $l$-s-stationary in $\pk{A}$. In particular  $d^s_{l}(T_1) \cap d^s_{l}(T_2) $ is $l$-s-stationary in $\pk{A}$. 

Since $m_0=l$, $X_0$ is also $l$-s-stationary in $\pk{A}$. Then by the $(n+1)$-{s}-stationarity of $S$ we get, 

\begin{minipage}{\linewidth}    
\begin{eqnarray*} 
\varnothing &\neq & S \cap d^s_{l} {(}X_{0}{)} \cap d^s_l {\big(}\pq{d^s_{l}(T_1) \cap d^s_{l}(T_2)}{\big)}\\ & \subseteq &S \cap d^s_{l} (X_{0}) \cap d^s_{l}(T_1) \cap d^s_{l}(T_2). 
\end{eqnarray*}
\end{minipage}\vspace{1mm}

In the last inclusion we used Lemma \ref{lem1} (1) and (3). Since $l=m_0$, we actually have

\( \hspace{30mm} \varnothing \neq S 	\cap d^s_{m_0} (X_{0}) \cap d^s_{l}(T_1) \cap d^s_{l}(T_2). \) \vspace{1mm}

($m_0<l$): On the one hand, $S$ is in particular $n$-s-stationary in $\pk{A}$, $X_0$ is $m_0$-s-stationary  in $\pk{A}$ and $m_0<n$. Then by induction hypothesis on (1) we have, $S\cap d^s_{m_0}(X_0)$ is $n$-s-stationary in $\pk{A}$. In particular $d^s_{m_0}(X_0)$ is $n$-s-stationary, and so $l$-s-stationary in $\pk{A}$ (Lemma \ref{nstimst}). On the other hand, $T_1,T_2$ are $l$-s-stationary  in $\pk{A}$ and $l \leq n$. Thus, by induction hypothesis on (2),  $d^s_{l}(T_1) \cap d^s_{l}(T_2) $ is $l$-s-stationary in $\pk{A}$.  Using the  $(n+1)$-{s}-stationarity of $S$ and  Lemma \ref{lem1} (1) and (3),  we get        \\     \begin{minipage}{\linewidth}    
\begin{eqnarray*} 
\varnothing & \neq & S \cap d^s_{l} {\big(} d^s_{m_0} (X_{0}) {\big)} \cap d^s_l {\big(}\pq{d^s_{l}(T_1) \cap d^s_{l}(T_2)}{\big)} \\ & \subseteq & S \cap d^s_{m_0} (X_{0}) \cap d^s_{l}(T_1) \cap d^s_{l}(T_2).
\end{eqnarray*}
\end{minipage}
\end{enumerate}\vspace{2mm}
Suppose now that (1) holds for $q-1$, and let us prove it for $q$: 
 \begin{enumerate}
\item[(\ q\ )] $S\subseteq \pk{A}$ is $(n+1)$-s-stationary in $\pk{A}$ and $X_i\subseteq 	\pk{A}$ is $m_i$-s-stationary for $m_i< n+1$ and $i \in \{0,...,q-1\}$. By induction hypothesis, $S^*:=S \cap d^s_{m_0} (X_{0}) \cap \cdots \cap d^s_{m_{q-2}} (X_{q-2}) $ is $(n+1)$-s-stationary in $\pk{A}$. Then using by the base case $(q=1)$ of the induction with $S^*$, we get that $S^* \cap d^s_{m_{q-1}} (X_{q-1})$ is $(n+1)$-s-stationary in $\pk{A}$. Whence, $S^* \cap d^s_{m_{q-1}} (X_{q-1})=S \cap d^s_{m_0}(X_{0}) \cap \cdots \cap d^s_{m_{q-1}}(X_{q-1})$ is $(n+1)$-s-stationary in $\pk{A}$. 
\end{enumerate}
\quad\\
(2)  Suppose $S\subseteq \pk{A}$ is $(n+2)$-s-stationary in $\pk{A}$ and let $m \leq n+1$ and $T_1,T_2\subseteq \pk{A}$ be $m$-s-stationary in $\pk{A}$. Consider two cases: \vspace{2mm}
\quad\\
$(m<n+1)$: $S\subseteq \pk{A}$ is in particular $(n+1)$-s-stationary in $\pk{A}$, $m \leq n$ and $T_1,T_2\subseteq \pk{A}$ are $m$-s-stationary in $\pk{A}$. By induction hypothesis on (2), we have that  $S\cap d^s_m(T_1) \cap d^s_m(T_2)$ is $m$-s-stationary.\vspace{2mm}
\quad\\
$(m=n+1)$: Let $l<m=n+1$ and  let $U_1,U_2$ be $l$-s-stationary in $\pk{A}$. $T_1,T_2\subseteq \pk{A}$ are $m$-s-stationary in $\pk{A}$. Then for $i=1,2$, $T_i$ is $(n+1)$-s-stationary in $\pk{A}$ and $U_i$ is  $l$-s-stationary in $\pk{A}$. Let $i=1,2$, then $T_i$, $l,n+1$ and $U_i$ satisfy the conditions required in the hypothesis of (1). Since we already proved (1), it follows that  $T_1\cap d^s_l(U_1)$ and  $T_2\cap d^s_l(U_2)$ are both $(n+1)$-s-stationary in $\pk{A}$. Thus, by $(n+2)$-{s}-stationarity of $S$, \vspace{2mm}

 \begin{minipage}{\linewidth}    
\begin{eqnarray*} 
\varnothing & \neq & S\cap d^s_{n+1}{\big(}\pq{T_1 \cap d^s_l(U_1)}{\big)} \cap d^s_{n+1}{\big(}\pq{T_2 \cap d^s_l(U_2)}{\big)} \\ & \subseteq & S\cap d^s_{n+1}(T_1) \cap d^s_{n+1}(T_2) \cap d^s_l(U_1) \cap d^s_l(U_2).
\end{eqnarray*}
\end{minipage}  \vspace{2mm}
\quad\\
where in the inclusion we used Lemma \ref{lem1}. Since $l$ was arbitrary and such that $l < n+1=m$,  last equation is tell us that $S\cap d^s_m(T_1) \cap d^s_m(T_2)$ is $m$-s-stationary in $\pk{A}$. \qedhere
\end{itemize}
\end{proof} \vspace{1mm} 

Proposition \ref{eqc} mirrors  two of the results presented by Bagaria in \cite{B2} (Proposition 2.10). Its successful generalisation to the case of $\pk{A}$ is a key factor that enabled us to extend Bagaria's results to the case of $\pk{A}$. \\

Let us now establish the equivalence between the $``$simultaneously stationary$"$ versions of Definitions \ref{nst1} and \ref{nst}, whenever $\kappa$ is a regular limit cardinal. Recall that, when $\kappa$ is limit, $C_{\kappa}= \{ x \in \pk{A} : x \cap \kappa $ is  cardinal$\}$ is a club in $\pk{A}$. Therefore, whenever $S$ is $n$-s-stationary, by Proposition  \ref{new2}, we have that $S \cap C_{\kappa}$ is also $n$-s-stationary.\vspace{1mm}

\begin{prop}\label{limitnolimit2.0} 
For all $n< \omega$ and $\kappa$ regular limit cardinal, $S$ is $n$-s-stationary in $\pk{A}$ if and only if  $S$ is $n$-s-stationary (BFS) in $\pk{A}$.
 \end{prop}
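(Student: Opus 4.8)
I would argue by induction on $n$, establishing the equivalence simultaneously for every regular limit cardinal in the role of $\kappa$, so that the inductive hypothesis may be applied inside $\pkld{\mu}{x}$ whenever $\mu=|x\cap\kappa|$ is a regular limit cardinal. The case $n=0$ is immediate, since both notions of $0$-s-stationarity mean ``cofinal''. For the step, assume the equivalence for all $m<n$ and fix $S\subseteq\pk{A}$. For the implication from Definition \ref{nsst} to the BFS version (Definition \ref{nst1}) I would invoke the remark preceding the statement: as $C_\kappa$ is a club, Proposition \ref{new2} gives that $S\cap C_\kappa$ is $n$-s-stationary. Given $m<n$ and sets $T_1,T_2$ that are $m$-s-stationary (BFS) --- hence $m$-s-stationary by the inductive hypothesis --- some $x\in S\cap C_\kappa$ has $\mu:=|x\cap\kappa|$ a regular limit cardinal with $T_i\cap\pkld{\mu}{x}$ $m$-s-stationary in $\pkld{\mu}{x}$. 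Since $x\in C_\kappa$, $x\cap\kappa=\mu$ is a regular cardinal, which is clause (i) of Definition \ref{nst1}; and applying the inductive hypothesis inside $\pkld{\mu}{x}$ makes the reflected sets $m$-s-stationary (BFS) there, giving clause (ii). So $x$ witnesses that $S$ is $n$-s-stationary (BFS).

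For the reverse implication --- the substantive one --- assume $S$ is $n$-s-stationary (BFS) with $n\geq 1$. Then $\pk{A}$ is $1$-stationary (BFS), so $\kappa$ is weakly Mahlo by Corollary \ref{s1iwm}, and the set $D:=\{x\in\pk{A}: x\cap\kappa\text{ is a limit cardinal}\}$ is a club in $\pk{A}$ (it is closed because an increasing union of limit cardinals is a limit cardinal, and cofinal because the limit cardinals are unbounded below the weakly inaccessible $\kappa$). The key elementary observation I would isolate is: \emph{if $\mu$ is a regular cardinal and $\mu=\bigcup_i(z_i\cap\kappa)$ for sets $z_i\in D$ with $|z_i|<\mu$, then $\mu$ is a limit cardinal} --- otherwise $\mu=\nu^+$, each $z_i\cap\kappa$ is a limit cardinal $\leq\nu$, and the union is $\leq\nu<\mu$.

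To verify that $S$ is $n$-s-stationary, fix $m<n$ and $m$-s-stationary $T_1,T_2$. If $m\geq 1$: by Proposition \ref{new2}, $T_i\cap D$ is $m$-s-stationary, hence $m$-s-stationary (BFS) by the inductive hypothesis; applying $n$-s-stationarity (BFS) of $S$ at level $m$ yields $x\in S$ with $\mu:=x\cap\kappa$ a regular cardinal and $(T_i\cap D)\cap\pkld{\mu}{x}$ $m$-s-stationary (BFS), hence cofinal, in $\pkld{\mu}{x}$; then $x=\bigcup\big((T_i\cap D)\cap\pkld{\mu}{x}\big)$, so the observation forces $\mu$ to be a regular limit cardinal and $x\in D$, and the inductive hypothesis (inside $\pkld{\mu}{x}$) together with monotonicity give that $T_i\cap\pkld{\mu}{x}$ is $m$-s-stationary in $\pkld{\mu}{x}$, so $x$ is the required witness. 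If $m=0$: I would instead use $W:=d_0^s(T_1)\cap d_0^s(T_2)$, which is $1$-stationary by Proposition \ref{d01}(1) (here weak Mahloness of $\kappa$ is used), so $W\cap D$ is $1$-stationary by Proposition \ref{new20}, in particular cofinal by Lemma \ref{1sticf}; applying $1$-s-stationarity (BFS) of $S$ to the pair $(W\cap D,\,W\cap D)$ produces $x\in S$ over a regular cardinal $\mu$ with $(W\cap D)\cap\pkld{\mu}{x}$ cofinal in $\pkld{\mu}{x}$, whence as before $\mu$ is a regular limit cardinal, $x\in D$, and $x\in d_0^s(W\cap D)$. Finally $d_0^s(W\cap D)\subseteq d_0^s(W)\subseteq d_0^s(T_1)\cap d_0^s(T_2)$ by Lemma \ref{lem1}(1) and (3), so $x\in S\cap d_0^s(T_1)\cap d_0^s(T_2)$, which is exactly what is needed.

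The step I expect to cause the most trouble is precisely the case $m=0$ of the reverse implication. The temptation is to copy the argument for $m\geq 1$ by replacing the given cofinal sets $T_i$ with $T_i\cap D$, but that intersection may be empty, and no elementary ``padding'' of $T_i$ into $D$ survives the return trip to cofinality of $T_i\cap\pkld{\mu}{x}$. The remedy is to route everything through the genuinely $1$-stationary set $W=d_0^s(T_1)\cap d_0^s(T_2)$, to which Propositions \ref{new20} and \ref{new2} really do apply, and then to descend from a witness over $W\cap D$ to one over $T_1,T_2$ by means of the absorption $d_0^s(d_0^s(T))\subseteq d_0^s(T)$.
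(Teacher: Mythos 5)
Your proof is correct and follows essentially the same route as the paper's: induction on $n$, the forward direction by intersecting $S$ with a club whose trace on $\kappa$ consists of cardinals, and the reverse direction by splitting into $m=0$ (routed through $d_0^s(T_1)\cap d_0^s(T_2)$ and the absorption $d_0^s(d_0^s(T))\subseteq d_0^s(T)$) and $m\geq 1$ (intersecting the $T_i$ with the club and invoking the inductive hypothesis), with the reflected club trace forcing $\mu$ to be a limit cardinal. The only differences are cosmetic: you use the club $\{x: x\cap\kappa \text{ is a limit cardinal}\}$ and a union argument where the paper uses $\mathcal{C}_{\kappa}$ and a single witness above $\delta+1$.
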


\begin{proof} 
We proceed by induction on $n< \omega$. The case $n=0$ holds true because in $0$ both definitions coincide, given that $\kappa$ is a limit cardinal. So, let $n \geq 1$ and  assume the theorem holds for all $m<n$.  \vspace{2mm} 

($\Rightarrow$)  Suppose $S$ is $n$-s-stationary in $\pk{A}$, then $S':=S \cap C_{\kappa}$  is $n$-s-stationary in $\pk{A}$. Let $T_1,T_2$  be $m$-s-stationary (BFS) in $\pk{A}$ for some $m<n$. By induction hypotheses, $T_1,T_2$  are $m$-s-stationary in $\pk{A}$. Then there is $x\in S'$  such that $\mu:= |x \cap \kappa|$ is a  regular limit  cardinal and $T_1 \cap \pkld{\mu}{x}$, $T_2 \cap \pkld{\mu}{x}$ are $m$-s-stationary in $\pkld{\mu}{x}$. Note that $x \in S'\subseteq C_{\kappa}$, then $\mu=x \cap \kappa=|x \cap \kappa|$. Since $\mu$ is a limit cardnal, by induction hypotheses, $T_1 \cap \pkld{\mu}{x}$ and $T_2 \cap \pkld{\mu}{x}$ are also $m$-s-stationary  (BFS) in $\pkld{\mu}{x}$. Therefore,  $x\in S' \subseteq S$  is such that $\mu= x \cap \kappa$ is a regular cardinal, and $T_1 \cap \pkld{\mu}{x}$, $T_2 \cap \pkld{\mu}{x}$ are $m$-s-stationary (BFS)  in $\pkld{\mu}{x}$. This is, $S$ is $n$-s-stationary  (BFS)  in $\pk{A}$. \vspace{2mm}

($\Leftarrow$) Suppose $S$ is $n$-s-stationary (BFS) in $\pk{A}$. Let $T_1,T_2$  be $m$-s-stationary  in $\pk{A}$ for some $m<n$. Let us consider two cases.

\begin{itemize}
\item[(i)] $m=0$: Since $n \geq 1$ and  $S$ is $n$-s-stationary (BFS) in $\pk{A}$, in particular, $S$ is $1$-s-stationary (BFS) in $\pk{A}$. And since $\kappa$ is a limit cardinal, then  by Corollary \ref{s1iwm}, $\kappa$ is weakly Mahlo. Now, by Proposition \ref{d01} and Proposition \ref{new2}, $T':=d_{0}(T_1)\cap d_{0}(T_2)\cap C_{\kappa}$ is $1$-s-stationary in $\pk{A}$. In particular $T'$ is cofinal, and so there exists $x \in S$ such that $\mu:= x \cap \kappa$ is a regular  cardinal, and $T' \cap \pkld{\mu}{x}$ is cofinal in $\pkld{\mu}{x}$. Notice that, since $\mu= x \cap \kappa$ is a cardinal, we  have that $\mu=x \cap \kappa=|x \cap \kappa|$. \vspace{2mm}

We claim that $\mu$ is a  limit cardinal.  Suppose it is not, so $\mu= \delta^+$ for some cardinal $\delta$. Then $ |\delta +1 | < \delta^+=\mu=x\cap \kappa$, and so $\delta+1 \in \pkld{\mu}{x}$. Since $T' \cap \pkld{\mu}{x}$ is cofinal, there exists $y \in T' \cap \pkld{\mu}{x}$ such that $\delta+1 \subseteq y$. Now, because $y \in T' \subseteq C_{\kappa}$, we know that  $|y \cap \kappa|=y\cap \kappa$ is a regular limit cardinal. Therefore, as ordinals, $\delta <\delta+1 \leq y \cap \kappa < \mu = \delta^+$. But then, as cardinals, $\delta < y \cap \kappa <  \delta^+$, which is clearly a contradiction.\vspace{2mm}

Now, since $T' \cap \pkld{\mu}{x}$ is cofinal in $\pkld{\mu}{x}$, then $x \in d_{0}(T')$. By Proposition \ref{d01} (2), we have that $d_{0}\big(\pq{d_{0}(T_1)\cap d_{0}(T_2)\cap C_{\kappa}}\big)\subseteq d_{0}(T_1)\cap d_{0}(T_2) \cap C_{\kappa}$. Then $x \in d_{0}(T_1)\cap d_{0}(T_2)$, and so $T_1 \cap  \pkld{\mu}{x}$ and $T_2 \cap  \pkld{\mu}{x}$  are cofinal in $ \pkld{\mu}{x}$.\vspace{2mm} 

\item[(ii)] $m \geq 1$: By Proposition  \ref{new2}, we have that $T_1':=T_1 \cap C_{\kappa} $ and $T_2':=T_2 \cap  C_{\kappa} $ are also $m$-s-stationary in $\pk{A}$. Then by induction hypothesis,  $T_1'$ and $T_2'$ are also $m$-s-stationary (BFS) in $\pk{A}$. Then, there is $x \in S$ such that $\mu:= x \cap \kappa$ is a regular  cardinal, and $T_1' \cap \pkld{\mu}{x}$ and $T_2' \cap \pkld{\mu}{x}$ are both $m$-s-stationary (BFS) in $\pkld{\mu}{x}$. Notice that, since $\mu= x \cap \kappa$ is a cardinal, we  have that $\mu=x \cap \kappa=|x \cap \kappa|$. Now, since $T_1' \cap \pkld{\mu}{x}$ is in particular cofinal, and $T_1'\subseteq C_{\kappa}$, we prove that $\mu$ is a regular limit cardinal, using an argument analogous to the case $m=0$.\vspace{2mm} 

In summary, we have that  $x$ is such that $\mu= |x \cap \kappa|$ is a regular limit cardinal, and  $T_1' \cap \pkld{\mu}{x}$ and $T_2' \cap \pkld{\mu}{x}$ are  $m$-s-stationary (BFS) in $\pkld{\mu}{x}$. Then by inductive hypothesis,  $T_1' \cap \pkld{\mu}{x}$ and $T_2' \cap \pkld{\mu}{x}$ are  $m$-s-stationary in $\pkld{\mu}{x}$. And because $T_1' \cap \pkld{\mu}{x} \subseteq T_1 \cap \pkld{\mu}{x}$ and $T_2' \cap \pkld{\mu}{x} \subseteq T_2 \cap \pkld{\mu}{x}$, we have that $T_1 \cap  \pkld{\mu}{x}$, $T_2 \cap  \pkld{\mu}{x}$  are $m$-s-stationary in $ \pkld{\mu}{x}$. \qedhere
\end{itemize} 
\end{proof}\vspace{2mm}  

With this equivalence established, we will now proceed to prove that two of the main results on higher s-stationarity in the ordinal case (Theorems 3.1 and 2.11 from \cite{B2}) can be successfully generalised to higher s-stationarity in the $\pk{A}$ case.

\begin{prop}\label{eqddc} 
For every $n< \omega$, the set $\mathrm{NsS}_{\kappa}^n(A):=\{N \subseteq \pk{A} : N $ is not $n$-s-stationary  $\text{ in } \pk{A} \}$ is an ideal on $\pk{A}$. 
\end{prop}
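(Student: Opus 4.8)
The plan is to check the three defining properties of an ideal on $\pk{A}$ for $\mathrm{NsS}_{\kappa}^n(A)$: nonemptiness, downward closure under inclusion, and closure under finite unions. The first two are essentially formal. For nonemptiness, note $\varnothing$ is not cofinal in $\pk{A}$ (since $\pk{A}\neq\varnothing$), hence not $0$-s-stationary, and hence — by the contrapositive of Lemma \ref{nstimst} — not $n$-s-stationary, so $\varnothing\in\mathrm{NsS}_{\kappa}^n(A)$. For downward closure, if $N'\subseteq N$ and $N$ is not $n$-s-stationary, then $N'$ cannot be $n$-s-stationary either, since $n$-s-stationarity passes to supersets (the observation just after Definition \ref{nsst}); thus $N\in\mathrm{NsS}_{\kappa}^n(A)$ and $N'\subseteq N$ give $N'\in\mathrm{NsS}_{\kappa}^n(A)$.

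The real content is closure under finite unions, and it suffices to handle binary unions and then induct on the number of sets. So suppose, toward a contradiction, that $N_1,N_2\in\mathrm{NsS}_{\kappa}^n(A)$ while $N:=N_1\cup N_2$ is $n$-s-stationary. I would split into the cases $n=0$ and $n\geq 1$. For $n=0$: $N$ is cofinal; since $N_1$ is not cofinal, pick $x_1\in\pk{A}$ with $x_1\not\subseteq y$ for every $y\in N_1$, and likewise $x_2$ for $N_2$; applying cofinality of $N$ to $x_1\cup x_2\in\pk{A}$ produces $y\in N$ with $x_1\cup x_2\subseteq y$, whence $y\notin N_1$ and $y\notin N_2$, contradicting $y\in N_1\cup N_2$. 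For $n\geq 1$: since $N_1$ is not $n$-s-stationary, by the characterisation of $n$-s-stationarity recorded after Definition \ref{nsst} there are $m<n$ and $m$-s-stationary sets $T_1,T_2\subseteq\pk{A}$ with $N_1\cap d_m^s(T_1)\cap d_m^s(T_2)=\varnothing$. As $N$ is $n$-s-stationary and $m<n$, Proposition \ref{eqc}\,(1) (with $q=2$) yields that $N\cap d_m^s(T_1)\cap d_m^s(T_2)$ is $n$-s-stationary; but
\[
N\cap d_m^s(T_1)\cap d_m^s(T_2)\;=\;\big(N_2\cap d_m^s(T_1)\cap d_m^s(T_2)\big)\;\subseteq\;N_2,
\]
because the corresponding intersection with $N_1$ is empty. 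Hence $N_2$ contains an $n$-s-stationary set and is therefore itself $n$-s-stationary (again by passage to supersets), contradicting $N_2\in\mathrm{NsS}_{\kappa}^n(A)$. So $N_1\cup N_2\in\mathrm{NsS}_{\kappa}^n(A)$, and the general finite case follows by induction on the number of sets.

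I expect no genuine obstacle: the one nontrivial step is the appeal to Proposition \ref{eqc}\,(1), which is exactly what lets us shrink the $n$-s-stationary set $N$ down to the piece $d_m^s(T_1)\cap d_m^s(T_2)$ witnessing that $N_1$ is "small", thereby forcing $N_2$ to be $n$-s-stationary — this is the two-cardinal analogue of the classical fact that stationary-like notions cannot split a set into two non-stationary halves. The only point requiring care is that Proposition \ref{eqc}\,(1) is vacuous when $n=0$, which is why the case $n=0$ must be argued separately via the elementary cofinality computation above.
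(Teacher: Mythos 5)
Your proof is correct, and it follows the same overall strategy as the paper: the first two ideal axioms are formal, binary unions are the crux, the case $n=0$ is handled by the same elementary cofinality computation, and Proposition \ref{eqc}\,(1) is the engine for $n\geq 1$. The execution of the case $n\geq 1$ is, however, a genuine (and slightly cleaner) variant. The paper extracts witnessing pairs $(T_i,U_i)$ for \emph{both} $N_1$ and $N_2$, applies Proposition \ref{eqc}\,(1) to $S=\pk{A}$ to see that the fourfold intersection of derived sets is $n$-s-stationary (which forces it to first dispose of the sub-case where $\pk{A}$ itself is not $n$-s-stationary), and then pushes a point of $d^s_{n-1}(S)\cap(N_1\cup N_2)$ down into all four derived sets via Lemma \ref{lem1} to contradict one of the two witnesses. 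You instead use only the witness for $N_1$ and apply Proposition \ref{eqc}\,(1) directly to $S=N_1\cup N_2$, which is $n$-s-stationary by the contradiction hypothesis; since $N_1\cap d^s_m(T_1)\cap d^s_m(T_2)=\varnothing$, the resulting $n$-s-stationary set lands inside $N_2$, and upward monotonicity of $n$-s-stationarity finishes the argument. This buys you three small economies: no case distinction on whether $\pk{A}$ is $n$-s-stationary, no appeal to $d^s_{n-1}$ or to Lemma \ref{lem1}, and only one witnessing pair instead of two. The paper's version, in exchange, exhibits an explicit point lying in all the relevant derived sets, which is closer in form to Bagaria's original ordinal argument. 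Both are valid; yours is the more streamlined of the two.
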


\begin{proof} 
Clearly  $\varnothing \in \mathrm{NsS}_{\kappa}^n(A) $, for all $n<\omega$. Also, it is easy to see that if $N_1 \subseteq N_2$ and $N_2 \in \mathrm{NsS}_{\kappa}^n(A)$ then $N_1 \in \mathrm{NsS}_{\kappa}^n(A)$, for all $n<\omega$. We are left to prove that if $n<\omega$ and $N_1,N_2 \in \mathrm{NsS}_{\kappa}^n(A)$, then $N_1 \cup N_2 \in \mathrm{NsS}_{\kappa}^n(A)$. We consider two cases: \vspace{1mm} 

\begin{enumerate}
\item[(n=0)]  Suppose  $N_1,N_2  \in \mathrm{NsS}_{\kappa}^0(A)$, this is, $N_1,N_2$ are not cofinal in $\pk{A}$. Then, 
\begin{center}
(i) there is $x_1 \in \pk{A}$ such that for all $y \in N_1$, $x_1 \not\subseteq y$, \\
(ii) there is $x_2 \in \pk{A}$ such that for all $y \in N_2$, $x_2 \not\subseteq y$. 
\end{center}
Towards a contradiction, assume that $N_1 \cup N_2$ is cofinal in $\pk{A}$. Then, there is $y_1 \in N_1\cup N_2$ such that $x_1 \subseteq y_1$. Similarly, there is $y_2 \in N_1\cup N_2$ such that $y_1 \cup x_2 \subseteq y_2$. So that, we have   $x_1 \subseteq y_1 \cup x_2 \subseteq y_2$. Now, if $y_2 \in N_1$, then $x_1 \subseteq y_2$, contradicting (i). And, if $y_2 \in N_2$, then $x_2 \subseteq y_2$, contradicting (ii).  Hence, $N_1\cup N_2 $  is not cofinal in $\pk{A}$. \vspace{2mm}
\item[(n>0)] Suppose  $N_1,N_2 \subseteq \pk{A}$ are not $n$-s-stationary in $\pk{A}$. Then, for $i=1,2$, there are $m_i<n$ and $T_i,U_i \ $   $m_i$-s-stationary subsets of $\pk{A}$ such that
$$\hspace{15mm} d^s_{m_i}(T_1) \cap d^s_{m_i}(U_1) \cap N_i = \varnothing. \quad (\dagger)_i$$
Now, if $\pk{A}$ is not $n$-s-stationary, no subset of $\pk{A}$  is $n$-s-stationary. In particular,  $N_1\cup N_2 $  is not $n$-s-stationary in $\pk{A}$. So, suppose  $\pk{A}$ is $n$-s-stationary. Then by Proposition \ref{eqc} (1), we have that $$\hspace{15mm} S:=d^s_{m_1}(T_1) \cap d^s_{m_1}(U_1) \cap d^s_{m_2}(T_2) \cap d^s_{m_2}(U_2)$$ is $n$-s-stationary in $\pk{A}$. Towards a contradiction, suppose that $N_1 \cup N_2$ is $n$-s-stationary. Then, there is  $x \in d^s_{n-1}(S)  \cap {(} N_1 \cup N_2 {)}$. Since $m_1,m_2 \leq n-1$, by substituting $S$ and using Lemma \ref{lem1} (1) and (2), we get 
\begin{minipage}{\linewidth}    
\begin{eqnarray*}
x &\in &d^s_{n-1}{\big(}\pq{d^s_{m_1}(T_1) \cap d^s_{m_1}(U_1) \cap d^s_{m_2}(T_2) \cap d^s_{m_2}(U_2)}{\big)}\cap {(} N_1 \cup N_2 {)}  \\ & \subseteq &d^s_{m_1}(T_1) \cap d^s_{m_1}(U_1) \cap d^s_{m_2}(T_2) \cap d^s_{m_2}(U_2) \cap {(} N_1 \cup N_2 {)}.
\end{eqnarray*}
\end{minipage}  \vspace{2mm}

In particular, we have $x \in N_1 \cup N_2$. But if $x \in N_1$, then  $x \in d^s_{m_1}(T_1) \cap d^s_{m_1}(U_1) \cap N_1$, contradicting  $(\dagger)_1$. And if $x \in N_2$, then  $x \in d^s_{m_2}(T_2) \cap d^s_{m_2}(U_2) \cap N_2$, contradicting  $(\dagger)_2$. Hence, $N_1\cup N_2 $  is not $n$-s-stationary in $\pk{A}$. \qedhere
\end{enumerate}
 \end{proof}\vspace{1mm} 

Now, recall that an ideal $I$ on $X$ is proper if and only if $X \notin I$. Therefore, if $\mathrm{NsS}_{\kappa}^n(A)$ is proper, we have $\pk{A}\notin \mathrm{NsS}_{\kappa}^n(A)$, meaning that  $\pk{A}$ is $n$-s-stationary in itself. Conversely, if $\pk{A}$ is $n$-s-stationary, then  $\pk{A}\notin \mathrm{NsS}_{\kappa}^n(A)$, ensuring that   $\mathrm{NsS}_{\kappa}^n(A)$ is proper. Thus, Proposition \ref{eqddc} provide us with the analogue to Theorem 3.1 in \cite{B2}.\vspace{1mm}

\begin{theorem}\label{th2} 
For every $n< \omega$, the set $\mathrm{NsS}_{\kappa}^n(A)=\{N \subseteq \pk{A} : N $ is not $n$-s-stationary  $\text{ in } \pk{A} \}$ is a proper ideal on $\pk{A}$  if and only if $\pk{A}$ is $n$-s-stationary in $\pk{A}$. $\square$ 
\end{theorem}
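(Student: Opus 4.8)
The plan is to read the statement off almost directly from Proposition \ref{eqddc}, together with the elementary fact that an ideal $I$ on a set $X$ is proper precisely when $X \notin I$. Since Proposition \ref{eqddc} already guarantees that $\mathrm{NsS}_{\kappa}^n(A)$ is an ideal on $\pk{A}$ for every $n<\omega$, the only thing left to identify is the condition under which $\pk{A}$ itself fails to belong to this ideal.

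First I would unwind the definition: by construction $\pk{A}\in \mathrm{NsS}_{\kappa}^n(A)$ if and only if $\pk{A}$ is not $n$-s-stationary in $\pk{A}$, so taking contrapositives, $\pk{A}\notin \mathrm{NsS}_{\kappa}^n(A)$ if and only if $\pk{A}$ is $n$-s-stationary in $\pk{A}$. For the forward implication, suppose $\mathrm{NsS}_{\kappa}^n(A)$ is a proper ideal; properness means exactly $\pk{A}\notin \mathrm{NsS}_{\kappa}^n(A)$, which by the previous sentence yields that $\pk{A}$ is $n$-s-stationary. Conversely, if $\pk{A}$ is $n$-s-stationary then $\pk{A}\notin \mathrm{NsS}_{\kappa}^n(A)$, and since Proposition \ref{eqddc} already makes $\mathrm{NsS}_{\kappa}^n(A)$ an ideal, this shows it is a proper ideal. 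This closes the equivalence.

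There is no genuine obstacle in this final step: all of the work has been done upstream. The nontrivial content is entirely absorbed into Proposition \ref{eqddc}, whose case $n>0$ in turn rests on the simultaneous-reflection closure property of Proposition \ref{eqc} (1) — that is where the real combinatorics lives. I would perhaps add one sentence flagging the degenerate case $n=0$, where $\mathrm{NsS}_{\kappa}^0(A)$ is the ideal of non-cofinal subsets of $\pk{A}$ and properness amounts to $\pk{A}$ being cofinal in itself (which is trivially true), but this is already subsumed by the general argument, so the proof can simply be stated uniformly for all $n<\omega$.
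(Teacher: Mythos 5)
Your proposal is correct and matches the paper's own argument exactly: the paper also derives the theorem immediately from Proposition \ref{eqddc} together with the observation that properness of an ideal $I$ on $X$ means $X\notin I$, so that properness of $\mathrm{NsS}_{\kappa}^n(A)$ is equivalent to $\pk{A}$ being $n$-s-stationary in itself. Nothing is missing.
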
\vspace{1mm}

An intriguing question emerges when considering the consistency strength of $n$-stationarity in $\pk{A}$. While we have addressed this question for $n=1$ through Proposition \ref{ct}, the situation becomes more intricate for $n \geq 2$. In the ordinal case, $1$-stationarity only demanded an ordinal with uncountable cofinality. However, for $2$-stationarity, the ordinal $\alpha$ needed to be either inaccessible or the successor of a singular cardinal \cite{B2}. We will conclude this section by demonstrating that, similarly to the one ordinal case, for $\pk{A}$ to exhibit $2$-stationarity in itself, a notable strenghtening of the combinatorial properties of $\kappa$ is required. \vspace{1mm}

 \begin{definition}\label{defm} 
 Suppose $\kappa \subseteq B \subseteq A$, $X \subseteq \pk{A}$ and $Y \subseteq \pk{B}$. Then $X{\restriction}_B := \{ z \cap B \in \pk{B}: z \in X\} $ and $Y^A := \{ z \in \pk{A} : z \cap B \in Y\} $.\vspace{1mm} 
 \end{definition}

Under the above setting, it is easy to see that  $(X {\restriction}_B)^A \supseteq X$ and $(Y^A) {\restriction}_B=Y$.\vspace{1mm}

\begin{prop}[Menas]\label{menas} 
Suppose $\kappa \subseteq B \subseteq A$. Then 
\begin{enumerate}
\item If $S\subseteq \pk{A}$ is stationary in $\pk{A}$, then $S{\restriction}_B$ is stationary in $\pk{B}$.
\item If $S\subseteq \pk{B}$ is stationary in $\pk{B}$, then $S^A$ is stationary in $\pk{A}$.
\end{enumerate}
\end{prop}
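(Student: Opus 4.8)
The plan is to handle the two parts separately, in each case moving a club between $\pk{A}$ and $\pk{B}$ by means of the two operations of Definition~\ref{defm}.

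\emph{Part (1).} Let $S\subseteq\pk{A}$ be stationary and let $D$ be an arbitrary club in $\pk{B}$. I will show that $D^A=\{z\in\pk{A}:z\cap B\in D\}$ is club in $\pk{A}$. Closedness is immediate from the directed-set characterisation of closed sets recalled above: if $T\subseteq D^A$ is directed with $|T|<\kappa$, then $\{t\cap B:t\in T\}$ is a directed subset of $D$ of size $<\kappa$, hence $(\bigcup T)\cap B=\bigcup\{t\cap B:t\in T\}\in D$, i.e.\ $\bigcup T\in D^A$. For cofinality, given $x\in\pk{A}$ I would build $x_0=x$, $x_{n+1}=x_n\cup w_n$ where (using cofinality of $D$) $w_n\in D$ and $x_n\cap B\subseteq w_n$; since $\kappa$ is regular and uncountable, $x_\omega:=\bigcup_n x_n\in\pk{A}$, the $w_n$ form an increasing $\omega$-chain in $D$ with $\bigcup_n w_n=x_\omega\cap B$, so $x_\omega\cap B\in D$ and $x\subseteq x_\omega\in D^A$. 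Now $S\cap D^A\neq\varnothing$ by stationarity of $S$; if $z$ lies in this intersection then $z\cap B\in D$ and $z\cap B\in S{\restriction}_B$, so $S{\restriction}_B\cap D\neq\varnothing$. As $D$ was arbitrary, $S{\restriction}_B$ is stationary in $\pk{B}$.

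\emph{Part (2).} Let $S\subseteq\pk{B}$ be stationary and $C$ an arbitrary club in $\pk{A}$; the crux is to produce a club in $\pk{B}$ contained in $C{\restriction}_B=\{z\cap B:z\in C\}$. I would use the standard fact (see \cite{J3,J4,PV}) that every club in $\pk{A}$ contains a set $C_F=\{z\in\pk{A}:F[\,[z]^{<\omega}\,]\subseteq z\}$ for some $F:[A]^{<\omega}\to A$; fix such an $F$ with $C_F\subseteq C$ and for $y\in\pk{A}$ let $\overline y$ be the closure of $y$ under $F$ (iterate $y\mapsto y\cup F[\,[y]^{<\omega}\,]$ $\omega$ times), so that $\overline y\in C_F$, $|\overline y|<\kappa$, and $y\mapsto\overline y$ is monotone and continuous along $\omega$-chains. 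Put $D:=\{y\in\pk{B}:\overline y\cap B=y\}$. Then $D\subseteq C_F{\restriction}_B\subseteq C{\restriction}_B$, since $y\in D$ gives $y=\overline y\cap B$ with $\overline y\in C$. For cofinality of $D$: starting from $y_0\in\pk{B}$ set $y_{n+1}=\overline{y_n}\cap B$; monotonicity gives an increasing chain, $y_\omega:=\bigcup_n y_n\in\pk{B}$, $\overline{y_\omega}=\bigcup_n\overline{y_n}$ by continuity, hence $\overline{y_\omega}\cap B=\bigcup_n(\overline{y_n}\cap B)=y_\omega$, so $y_\omega\in D$ and $y_0\subseteq y_\omega$. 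For closedness of $D$: if $T\subseteq D$ is directed with $|T|<\kappa$ and $y=\bigcup T$, then any $a\in\overline y$ is generated from finitely many elements of $y$, which by directedness lie in a single $t\in T$, so $a\in\overline t$; if moreover $a\in B$ then $a\in\overline t\cap B=t\subseteq y$, whence $\overline y\cap B=y$ and $y\in D$. Thus $D$ is a club in $\pk{B}$, so $S\cap D\neq\varnothing$; any $y$ in $S\cap D$ equals $z\cap B$ for some $z\in C$, so $z\in S^A\cap C$. As $C$ was arbitrary, $S^A$ is stationary in $\pk{A}$.

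The routine size estimates (all unions are over $\leq\omega$ steps or over directed families of size $<\kappa$, hence stay in $\pk{A}$ resp.\ $\pk{B}$) use only that $\kappa$ is regular and uncountable and should cause no difficulty. The genuinely delicate point is the claim in Part~(2) that $C{\restriction}_B$ contains a club: $C{\restriction}_B$ itself need not be closed, because the $C$-preimages of an increasing chain in $C{\restriction}_B$ need not form a chain, and the obvious inflationary map $y\mapsto(\text{a member of }C\text{ above }y)\cap B$ has no reason to be continuous, so its fixed points need not be cofinal. Replacing $C$ by a finite-valued generating function $F$ is exactly what repairs this, since it turns ``closure of a $<\kappa$-sized set'' into a monotone, $\omega$-continuous operation, for which the fixed-point set $D$ is simultaneously closed and cofinal.
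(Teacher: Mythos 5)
Your proof is correct. There is nothing in the paper to compare it against: the author does not prove Proposition \ref{menas}, but records it as a known result of Menas and refers the reader to \cite{J3}. Your argument is the standard one from the literature. Part (1) is the easy ``lifting'' direction, and your direct verification that $D^A$ is club in $\pk{A}$ whenever $D$ is club in $\pk{B}$ (closedness via the directed-union characterisation, cofinality via an $\omega$-chain alternating between $\pk{A}$ and $D$) is exactly right. Part (2) is the genuinely harder ``projection'' direction, and your use of Menas's characterisation of the club filter by sets of the form $C_F$ is the standard device for it; your closing remark correctly identifies why it is needed, namely that $C{\restriction}_B$ need not itself be closed, whereas closure under a finite-arity function is monotone and continuous, so its fixed-point set $D$ is simultaneously closed and cofinal in $\pk{B}$. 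The only external ingredient is the fact that every club in $\pk{A}$ contains some $C_F$, which is standard and valid for arbitrary $A\supseteq\kappa$ (not just ordinals); if one wants to sidestep the coding required to make $F$ take values in $A$ rather than in $\pk{A}$, your argument runs verbatim for $F:[A]^{<\omega}\to\pk{A}$, since closure under such an $F$ is still a finite-character, $\omega$-continuous operation.
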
\vspace{1mm}

Proposition \ref{menas} is well known, and the reader may refer to \cite{J3} for a proof. Given a set $E \subseteq \kappa$, let us define  $E^*:=\{ x \in \pk{A} : |x \cap \kappa| \in E\} \subseteq \pk{A}$.

\begin{lemma}\label{jiu} 
If $\kappa$ is weakly Mahlo and $E \subseteq \{ \mu < \kappa : \mu \text{ is a regular limit  cardinal}\}$ is stationary in $\kappa$. Then $E^*$ is $1$-stationary in $\pk{A}$.
\end{lemma}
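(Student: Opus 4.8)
# Proof Proposal for Lemma \ref{jiu}

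The plan is to show that $E^*$ is $1$-stationary by verifying the defining property directly: given any $T \subseteq \pk{A}$ cofinal in $\pk{A}$, I must produce some $x \in E^*$ with $\mu := |x \cap \kappa|$ a regular limit cardinal and $T \cap \pkld{\mu}{x}$ cofinal in $\pkld{\mu}{x}$. The key observation is that membership in $E^*$ is precisely the condition $|x \cap \kappa| \in E$, so it suffices to arrange that the witness $x$ produced has $|x \cap \kappa| \in E$ on top of the usual reflection requirements.

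First I would invoke Corollary \ref{method}, which is tailor-made for this: since $\kappa$ is weakly Mahlo and $E \subseteq R = \{\mu < \kappa : \mu \text{ is a regular limit cardinal}\}$ is stationary in $\kappa$, and $T$ is cofinal in $\pk{A}$, I may build a sequence $\langle x_\alpha : \alpha < \kappa\rangle$ of elements of $T$ satisfying the growth conditions of Corollary \ref{method} (successor steps add $x_\alpha \cup \alpha$ and pass to a strictly larger element of $T$, limit steps take a strictly larger element of $T$ above the union). These steps are legitimate because $T$ is cofinal and $\kappa$ is a regular limit cardinal. Corollary \ref{method} then yields a strictly increasing subsequence $\langle x_{\beta_\alpha} : \alpha < \mu\rangle$ whose union $x := \bigcup_{\alpha < \mu} x_{\beta_\alpha}$ satisfies: $\mu := |x \cap \kappa| = |x|$ is a regular limit cardinal, $\mu \in E$, and $T \cap \pkld{\mu}{x}$ is cofinal in $\pkld{\mu}{x}$.

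Finally, since $\mu = |x \cap \kappa| \in E$, by definition $x \in E^*$. Thus $x$ is the required witness: $x \in E^*$, $\mu = |x \cap \kappa|$ is a regular limit cardinal, and $T \cap \pkld{\mu}{x}$ is cofinal (i.e., $0$-stationary) in $\pkld{\mu}{x}$. As $T$ was an arbitrary cofinal (= $0$-stationary) subset of $\pk{A}$, this shows $E^*$ is $1$-stationary in $\pk{A}$.

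The proof is essentially immediate once Corollary \ref{method} is in hand; there is no real obstacle. The only point requiring a moment's care is recognizing that Corollary \ref{method} already delivers the extra constraint $\mu \in E$ — this is exactly why the corollary was stated with an arbitrary stationary $E \subseteq R$ rather than just $R$ itself — so that no separate argument is needed to control $|x \cap \kappa|$.
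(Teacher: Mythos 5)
Your proposal is correct and follows essentially the same route as the paper: both construct the transfinite sequence of elements of $T$ and then apply Corollary \ref{method} with the stationary set $E \subseteq R$ to obtain a witness $x \in d_0(T)$ with $|x \cap \kappa| \in E$, hence $x \in E^*$. No gaps; the observation that Corollary \ref{method} was stated with an arbitrary stationary $E$ precisely for this purpose is exactly the point.
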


\begin{proof} 
Let $T$ be cofinal in $\pk{A}$, we have to prove that $d_0(T) \cap E^* \neq \varnothing$. Construct a transfinite sequence for ordinals  $\alpha < \kappa$, as follows \vspace{2mm}

\quad $x_0 \in T$, 

\quad $x_{\alpha +1} \in T$ is such that $x_{\alpha + 1} \supsetneq x_{\alpha}\cup  \alpha$,

\quad $x_{\alpha} \in T $ is such that  $x_{\alpha} \supsetneq \bigcup_{\gamma <  \alpha} x_{\gamma}$, for $\alpha < \kappa $ limit. \vspace{2mm}

This sequence satisfies the conditions required in Corollary \ref{method}. Then, there is a subsequence $\langle x_{\beta_{\alpha}} : \alpha < \mu \rangle$ such that $x:=  \bigcup_{\alpha <\mu} x_{\beta_{\alpha}}  \in \pk{A}$, $\mu := |x \cap \kappa| $ is a regular limit cardinal, $\mu \in E$,  and  $T \cap \pkld{\mu}{x}$ is cofinal in $ \pkld{\mu}{x}$. That is, $x \in d_0(T)$ and $\mu=|x \cap \kappa| \in E$. Therefore, $x \in d_0(T) \cap E^*$. 
\end{proof} \vspace{1mm} 

Given $n <\omega$, we say that $\kappa$ is $(n+1)$-weakly-Mahlo if and only if the set $W_n:=\{ \mu < \kappa : \mu$ is $n$-weakly-Mahlo$\}$ is stationary in $\kappa$ (See \cite{J4}). Notice that $\kappa$ is $1$-weakly-Mahlo if and only if $\kappa$ is weakly Mahlo. Moreover, for all $n<\omega$, we have that $W_n \subseteq \{ \mu < \kappa : \mu$ is a regular limit  cardinal$\}$. Since elements of $W_n$ are actually cardinals, for $n > 0$ we have,  
\begin{align*}
(W_{n-1})^*\cap C_{\kappa}&=\{ x \in \pk{A} : |x \cap \kappa| \in W_{n-1}\} \cap C_{\kappa} \\
&=\{ x \in \pk{A} : x \cap \kappa \in W_{n-1}\} \\ &= (W_{n-1})^A.
\end{align*}

Recall that if $X \subseteq \kappa$, then $X\subseteq \pk{A}$. Moreover, a set $S\subseteq \kappa$ is stationary in $\kappa$ if and only if $S$ is stationary in $\pk{\kappa}$ (See \cite{J3}). In the following we will talk about $W_n$ being subset of $\kappa$, or  being subset of $\pk{\kappa}$, indistinctly.\vspace{1mm}

\begin{corollary}\label{35} 
Let $n > 0$. If $\kappa$ is $n$-weakly-Mahlo, then $(W_{n-1})^A$ is $1$-stationary in $\pk{A}$.  $\square$ 
\end{corollary}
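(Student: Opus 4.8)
The plan is to derive Corollary \ref{35} as an essentially immediate consequence of Lemma \ref{jiu} together with the identity $(W_{n-1})^A = (W_{n-1})^* \cap C_\kappa$ established in the displayed computation just before the statement. So the first step is to observe that if $\kappa$ is $n$-weakly-Mahlo with $n>0$, then by definition the set $W_{n-1} = \{\mu < \kappa : \mu \text{ is } (n-1)\text{-weakly-Mahlo}\}$ is stationary in $\kappa$; note that for $n=1$ this reads $W_0 = \kappa$, which is trivially stationary in itself, and for $n>1$ it is the definition of $n$-weakly-Mahlo. In all cases, since $W_{n-1} \subseteq \{\mu < \kappa : \mu \text{ is a regular limit cardinal}\}$ (as remarked in the excerpt), the set $E := W_{n-1}$ satisfies exactly the hypotheses of Lemma \ref{jiu}: $\kappa$ is weakly Mahlo (because $n$-weakly-Mahlo for $n \geq 1$ implies weakly Mahlo) and $E$ is a stationary subset of $\kappa$ consisting of regular limit cardinals.

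Next I would apply Lemma \ref{jiu} to conclude that $(W_{n-1})^* = E^*$ is $1$-stationary in $\pk{A}$. Then, since $\kappa$ is weakly Mahlo, $C_\kappa$ is a club in $\pk{A}$, and by Proposition \ref{new2} (or, since $1$-stationarity equals $1$-s-stationarity by Proposition \ref{1sstiff1st}, by the $n=1$ case), the intersection of a $1$-stationary set with a club is again $1$-stationary; hence $(W_{n-1})^* \cap C_\kappa$ is $1$-stationary in $\pk{A}$. Finally, invoking the displayed identity $(W_{n-1})^* \cap C_\kappa = (W_{n-1})^A$, we obtain that $(W_{n-1})^A$ is $1$-stationary in $\pk{A}$, which is exactly the statement.

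I do not anticipate a genuine obstacle here, since the corollary is packaged precisely to follow from the preceding lemma and computation; the only point requiring a moment's care is the bookkeeping around the definition of $n$-weakly-Mahlo, in particular checking that the base case $n=1$ (where $W_0$ should be read as $\kappa$ itself, or at least as the full set of relevant cardinals below $\kappa$) is consistent with the convention in the paper, and that "$n$-weakly-Mahlo $\Rightarrow$ weakly Mahlo" is available — both of which are noted explicitly in the text just before the statement. Because of this, the proof is short enough that one might simply write "$\square$" as in the excerpt, but a one-line justification pointing to Lemma \ref{jiu}, Proposition \ref{new2}, and the displayed equality suffices.
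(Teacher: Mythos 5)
Your proof is correct and is exactly the argument the paper intends by leaving the corollary with a bare ``$\square$'': apply Lemma \ref{jiu} to $E=W_{n-1}$ (stationary by the definition of $n$-weakly-Mahlo and contained in the regular limit cardinals), intersect with the club $C_\kappa$ using Proposition \ref{new20} (or \ref{new2} via Proposition \ref{1sstiff1st}), and invoke the displayed identity $(W_{n-1})^*\cap C_\kappa=(W_{n-1})^A$. The only slip is the parenthetical reading $W_0=\kappa$, which would contradict $W_0\subseteq\{\mu<\kappa:\mu\text{ regular limit}\}$; the correct reading is that $W_0$ is the set of regular limit cardinals below $\kappa$, and since you ultimately rely on the paper's stated inclusion rather than on that parenthetical, the argument stands.
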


\begin{prop}\label{nlwm} 
If $\pk{A}$ is $2$-stationary in $\pk{A}$, then $\kappa$ is $n$-weakly-Mahlo for al $n<\omega$. 
\end{prop}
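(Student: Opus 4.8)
The plan is to prove, by induction on $n\geq 1$, that $\kappa$ is $n$-weakly-Mahlo; since $\kappa$ is weakly Mahlo (hence $0$-weakly-Mahlo), this yields the statement. The base case $n=1$ is exactly Proposition~\ref{siwm}, because $\pk{A}$, being $2$-stationary, is $1$-stationary by Lemma~\ref{nstimst}. For the inductive step, assume $\kappa$ is $n$-weakly-Mahlo with $n\geq1$, and fix a club $C\subseteq\kappa$; as $\kappa$ is weakly Mahlo we may assume that every member of $C$ is an uncountable limit cardinal. The goal is to find $\mu\in C$ that is $n$-weakly-Mahlo, i.e. $\mu\in C\cap W_{n}$.

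First I would build the reflecting set. Since $\kappa$ is $n$-weakly-Mahlo, $W_{n-1}$ is stationary in $\kappa$, so $E:=W_{n-1}\cap C$ is a stationary subset of $\{\mu<\kappa:\mu\text{ is a regular limit cardinal}\}$, and by Lemma~\ref{jiu} the set $E^{*}=\{x\in\pk{A}:|x\cap\kappa|\in E\}$ is $1$-stationary in $\pk{A}$. Now $C_{\kappa}$ is club in $\pk{A}$, so it is $2$-stationary by Proposition~\ref{new20}; applying this to the $1$-stationary set $E^{*}$ produces some $x\in C_{\kappa}$ with $\mu:=|x\cap\kappa|$ a regular limit cardinal and $E^{*}\cap\pkld{\mu}{x}$ $1$-stationary in $\pkld{\mu}{x}$. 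The point of reflecting inside $C_{\kappa}$ is that then $x\cap\kappa=|x\cap\kappa|=\mu$, so $\mu\subseteq x$ and $z\cap\kappa=z\cap\mu$ for all $z\subseteq x$; hence $E^{*}\cap\pkld{\mu}{x}=\{z\in\pkld{\mu}{x}:|z\cap\mu|\in W_{n-1}\cap C\cap\mu\}$.

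Next I would check $\mu\in C$. By Lemma~\ref{nstimst} applied in $\pkld{\mu}{x}$, the set $E^{*}\cap\pkld{\mu}{x}$ is cofinal in $\pkld{\mu}{x}$; then, by an argument like the one in the second claim of the proof of Proposition~\ref{siwm}, for each cardinal $\gamma<\mu$ one has $\gamma^{+}<\mu$, so picking $z\in E^{*}\cap\pkld{\mu}{x}$ with $\gamma^{+}\subseteq z$ gives $\gamma<\gamma^{+}\leq|z\cap\mu|<\mu$ with $|z\cap\mu|\in C$. Thus $C\cap\mu$ is unbounded in $\mu$, and $\mu\in C$ since $C$ is closed. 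Finally, to see $\mu$ is $n$-weakly-Mahlo, i.e. that $W_{n-1}\cap\mu$ is stationary in $\mu$: the set $E^{*}\cap\pkld{\mu}{x}$ is stationary in $\pkld{\mu}{x}=\mathcal{P}_{\mu}(x)$ by Proposition~\ref{1stist}, and since $\mu\subseteq\mu\subseteq x$, Proposition~\ref{menas}(1) shows $(E^{*}\cap\pkld{\mu}{x}){\restriction}_{\mu}=\{w\in\pkld{\mu}{\mu}:|w|\in W_{n-1}\cap C\cap\mu\}$ is stationary in $\pkld{\mu}{\mu}$. Intersecting with the club $\{w\in\pkld{\mu}{\mu}:w\text{ is a cardinal}\}$ (a club since $\mu$ is a limit cardinal) and observing that each $w$ in the intersection is a cardinal with $w=|w|\in W_{n-1}\cap C\cap\mu$, we conclude that $W_{n-1}\cap C\cap\mu$, viewed inside $\pkld{\mu}{\mu}$, contains a stationary set, hence is stationary in $\pkld{\mu}{\mu}$, i.e. stationary in $\mu$. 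In particular $W_{n-1}\cap\mu$ is stationary in $\mu$, so $\mu\in W_{n}$, and with $\mu\in C$ this closes the inductive step.

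The main obstacle, beyond the bookkeeping of the weakly-Mahlo hierarchy, is the last step: upgrading the $1$-stationarity of $E^{*}\cap\pkld{\mu}{x}$ inside the two-cardinal space $\pkld{\mu}{x}$ to honest stationarity of $W_{n-1}\cap\mu$ in the ordinal $\mu$. Making this go through cleanly is exactly why one reflects inside $C_{\kappa}$ (to force $x\cap\kappa=\mu$, so that $|z\cap\kappa|$ and $|z\cap\mu|$ coincide) and why Menas' restriction theorem together with the club of cardinals below $\mu$ are invoked.
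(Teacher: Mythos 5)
Your proof is correct and uses the same core mechanism as the paper: build a $1$-stationary subset of $\pk{A}$ coding $W_{n-1}$ (via Lemma \ref{jiu}), reflect it using $2$-stationarity, and then combine Proposition \ref{1stist} with Menas' restriction theorem to conclude that $W_{n-1}\cap\mu$ is stationary in $\mu$. The only differences are organizational: the paper argues by contradiction with a least $n$ for which $W_n$ is non-stationary, transfers the witnessing club to $\pk{A}$ as $(W_n)^A$ and uses $d_1(C)\subseteq C$, whereas you run a direct induction, keep the club $C$ on the $\kappa$ side inside $E=W_{n-1}\cap C$, and verify $\mu\in C$ by hand via the unboundedness-plus-closedness argument of Proposition \ref{siwm} --- both routes are sound and of essentially equal length.
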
 
 
\begin{proof} 
Clearly $\kappa$ is $1$-weakly-Mahlo. Aiming for a contradiction, let $n+1$ be the least number such that  $\kappa$ is  $n$-weakly-Mahlo and $\kappa$ is not $(n+1)$-weakly-Mahlo.\vspace{2mm}

From $\kappa$ not being $(n+1)$-weakly-Mahlo we get that $W_n$ is not stationary in $\kappa$. Therefore, using $(W_n)^A{\restriction}_{\kappa}=W_n$ and Proposition \ref{menas} (1), we have that $(W_n)^A$  is not stationary in $\pk{A}$. Thus, there is some club $C$  in $\pk{A}$ such that $C \cap (W_n)^A = \varnothing$.\vspace{2mm}

From $\kappa$ being $\ n$-weakly-Mahlo and Corollary \ref{35}, we get that $(W_{n-1})^A$ is $1$-stationary in $\pk{A}$. Therefore, by Proposition \ref{new20}, $C \cap (W_{n-1})^A$ is  $1$-stationary in $\pk{A}$, and so by $2$-stationary of $\pk{A}$,  there is $$x \in d_1(C \cap (W_{n-1})^A ) \subseteq d_1(C)  \cap d_1( (W_{n-1})^A).$$

Since $x \in d_1( (W_{n-1})^A)$, we have that $(W_{n-1})^A \cap \pkld{\mu}{x}$ is $1$-stationary in $\pkld{\mu}{x}$. Thus, by Proposition \ref{1stist},  $(W_{n-1})^A \cap \pkld{\mu}{x}$ is stationary in $\pkld{\mu}{x}$. Notice that,
\begin{eqnarray*}
(W_{n-1})^A \cap \pkld{\mu}{x}&=&\{z \in \pk{A} : z \cap \kappa \in W_{n-1} \wedge z \subseteq x \wedge |z|< \mu \} \\ &\subseteq & \{ z \in \pk{x} :  z \cap \kappa \in W_{n-1} \cap \mu \} \\ &= &(W_{n-1}\cap \mu )^x. 
\end{eqnarray*}

Then $(W_{n-1} \cap \mu )^x$ is also stationary in $\pkld{\mu}{x}$. Using $(W_{n-1}\cap \mu )^x{\restriction}_{\mu}=W_{n-1}\cap \mu $ and Proposition \ref{menas} (1), we get that  $W_{n-1} \cap \mu \subseteq \mu$ is stationary in $\pkld{\mu}{\mu}$, and so $W_{n-1} \cap \mu$ is stationary in $\mu$. Then $\mu=|x \cap \kappa|$ is $n$-weakly-Mahlo, and so $x \in (W_{n})^A$. But we also have $x \in d_1(C) \subseteq C$. Hence, we found $x \in C \cap (W_n)^A = \varnothing$, which is impossible. 
\end{proof} 

\section{ A topology for $\pk{A}$}

Now, we direct our attention to establishing a correspondence between $n$-s-stationarity in $\pk{A}$ and a topology for $\pk{A}$. A primary challenge arises due to $\pk{A}$ not being a total order, eliminates the possibility of establishing a natural order topology as a starting point. Nonetheless, the study of $\pk{A}$ has introduced an alternative ordering, distinct from $``\subseteq"$, which has proven more suitable for addressing certain aspects of $\pk{A}$ (See \cite{Car1,Car2,M1}). For every $x,y \in \pk{A}$, define
$$x<^*y \text{ if and only if } x \subseteq y \text{ and } |x|<|y\cap \kappa|.$$
And, for $x,y \in \pk{A}$, we define the interval $(x,y):=\{z \in \pk{A} : x <^* z <^* y\}$.\\

 Notice that if $|y\cap \kappa| > \omega$, then $x,x' <^* y$ if and only if $x\cup x' <^*y $. This is so, because $|x|,|x|' < |y \cap \kappa|$ if and only if $|x \cup x'| < |y \cap \kappa|$. Also, notice that $(y,x) \subseteq \pkld{\mu}{x}$ where $\mu:=|x \cap \kappa|$. For if $y <^* z <^* x$, then $z \subseteq x$ and $|z| < |x \cap \kappa|=\mu$, i.e. $z \in \pkld{\mu}{x}$.\\

The topology we aim to define on $\pk{A}$ is a topology such that its limit points correspond to points exhibiting simultaneous stationary reflection in  $\pk{A}$. More precisely, we want the limit points of a set $S$ to lie whiting  some $d_n^s(S)$ for a give $n$. In any such case, these limit points $x$ should satisfy the condition that $|x \cap \kappa|$ is a regular limit cardinal. With this in mind, and based on our established results while considering the order  $<^*$, we propose the following definition.\vspace{1mm}

\begin{definition}\label{leq} 
For $y \in \pk{A}$ such that $|y \cap \kappa|$ is a regular limit cardinal and $x<^*y$, we define \vspace{-2mm} $$U_x(y):=(x,y)\cup \{y\} \subseteq \pk{A}$$
And, in general, we define $\mathcal{S}_0 \subseteq \mathcal{P}(\pk{A}$) by 
\begin{align*}
\mathcal{S}_0:=\{ \emptyset \} \cup &\{ \{y\} : y \in \pk{A} \text{ and } |y \cap \kappa| \text{ is not a regular limit cardinal} \} \\ \cup &\ \{U_x(y) :  |y\cap \kappa|  \text{ is a regular limit cardinal and } x<^*y\}.
\end{align*} 
\end{definition} \vspace{1mm}

Let us note a basic fact regarding Definition \ref{leq} that we will  constantly use. Let $x,x',y \in \pk{A}$ such that $|y \cap \kappa|$ is a regular limit cardinal, then $x,x' <^* y$ implies  $U_{x\cup x'}(y) \subseteq U_x(y) \cap U_{x'}(y)$. To see that, first notice that $U_{x\cup x'}(y)$ is defined, because $x,x' <^* y$ implies  $x\cup x' <^*y $. Now, let $w \in U_{x\cup x'}(y)$, then $x\cup x' <^* w < ^* y $ or $w=y$. This is,  $x,x' <^* w < ^* y $ or $w=y$, whence, $w \in U_x(y) \cap U_{x'}(y)$.\vspace{1mm}

\begin{definition}\label{leq2} 
Let $\tau_0$  be the topology generated by $\mathcal{S}_0$. This is, the smallest topology containing  $\mathcal{S}_0$.\vspace{1mm}
\end{definition}

\begin{prop}\label{s0b}  
$\mathcal{S}_0$ is a base for the topology $\tau_0$.  
\end{prop}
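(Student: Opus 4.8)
The plan is to verify directly the two conditions recalled in Section~2 for a family $\mathcal{B}\subseteq\mathcal{P}(X)$ to be a base for a topology on $X$, applied to $\mathcal{B}=\mathcal{S}_0$ and $X=\pk{A}$: (1) $\pk{A}=\bigcup\mathcal{S}_0$, and (2) whenever $z\in V_1\cap V_2$ with $V_1,V_2\in\mathcal{S}_0$ there is $V\in\mathcal{S}_0$ with $z\in V\subseteq V_1\cap V_2$. Once these hold, the collection of all unions of subfamilies of $\mathcal{S}_0$ (together with $\varnothing$) is exactly the smallest topology containing $\mathcal{S}_0$, i.e.\ $\tau_0$, and $\mathcal{S}_0$ is a base for it; so this suffices. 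Condition~(1) is immediate: for $y\in\pk{A}$, if $|y\cap\kappa|$ is not a regular limit cardinal then $y\in\{y\}\in\mathcal{S}_0$, and otherwise $|y\cap\kappa|\geq\omega$, so $\varnothing<^{*}y$ and $y\in U_{\varnothing}(y)\in\mathcal{S}_0$.

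For condition~(2) I would first record two elementary facts. Namely that $<^{*}$ is transitive (if $a<^{*}b<^{*}c$ then $a\subseteq c$ and $|a|<|b\cap\kappa|\leq|c\cap\kappa|$, the middle inequality because $b\cap\kappa\subseteq c\cap\kappa$), and that for any $U_x(y)\in\mathcal{S}_0$ and any $z\in U_x(y)=(x,y)\cup\{y\}$ one has both $x<^{*}z$ and $z\leq^{*}y$ (in the case $z\in(x,y)$ this is the definition of the interval, and in the case $z=y$ it uses that $x<^{*}y$ is part of membership of $U_x(y)$ in $\mathcal{S}_0$). Then, given $z\in V_1\cap V_2$, I split on whether $|z\cap\kappa|$ is a regular limit cardinal.

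If $|z\cap\kappa|$ is \emph{not} a regular limit cardinal, then $\{z\}\in\mathcal{S}_0$ and $\{z\}\subseteq V_1\cap V_2$, so $V=\{z\}$ works. If $|z\cap\kappa|$ \emph{is} a regular limit cardinal, then $z$ cannot belong to any singleton coming from the second clause of the definition of $\mathcal{S}_0$, so necessarily $V_1=U_{x_1}(y_1)$ and $V_2=U_{x_2}(y_2)$ with $|y_i\cap\kappa|$ regular limit and $x_i<^{*}y_i$. By the recorded fact, $x_1<^{*}z$, $x_2<^{*}z$, $z\leq^{*}y_1$ and $z\leq^{*}y_2$. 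Since $|z\cap\kappa|$ is infinite (regular limit cardinals are $\geq\omega$) and $|x_1|,|x_2|<|z\cap\kappa|$, we get $|x_1\cup x_2|\leq|x_1|+|x_2|<|z\cap\kappa|$, hence $x_1\cup x_2<^{*}z$ and $U_{x_1\cup x_2}(z)\in\mathcal{S}_0$. Now $z\in U_{x_1\cup x_2}(z)$, and for any $w\in U_{x_1\cup x_2}(z)$ (so $w=z$ or $x_1\cup x_2<^{*}w<^{*}z$) one checks $x_i<^{*}w$ (using $x_i\subseteq x_1\cup x_2$ together with the cardinality bound, or directly when $w=z$) and $w\leq^{*}z\leq^{*}y_i$, whence $w\in(x_i,y_i)\cup\{y_i\}=U_{x_i}(y_i)$ by transitivity of $\leq^{*}$. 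Thus $U_{x_1\cup x_2}(z)\subseteq V_1\cap V_2$ and $V=U_{x_1\cup x_2}(z)$ works.

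The one place that needs care — the main obstacle — is precisely this case split: the natural witness $U_{x_1\cup x_2}(z)$ lies in $\mathcal{S}_0$ only when $|z\cap\kappa|$ is a regular limit cardinal, which is exactly the configuration in which the cardinal bound $|x_1\cup x_2|<|z\cap\kappa|$ can be guaranteed (it forces $|z\cap\kappa|$ infinite); and in the complementary configuration the witness is instead the singleton $\{z\}$, which is legitimately in $\mathcal{S}_0$ precisely because $|z\cap\kappa|$ is then not a regular limit cardinal. Everything else is routine bookkeeping with $<^{*}$ and its transitivity.
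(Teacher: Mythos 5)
Your proof is correct and follows essentially the same route as the paper: both verify the two base conditions directly, using the singleton $\{z\}$ when $|z\cap\kappa|$ is not a regular limit cardinal and the set $U_{x_1\cup x_2}(z)$ (together with transitivity of $<^{*}$ and the closure of $<^{*}z$ under unions) otherwise. Your leading case split on whether $|z\cap\kappa|$ is a regular limit cardinal merely reorganises the paper's sub-cases (singleton opens, $z\in\{y,y'\}$, $z\notin\{y,y'\}$) into a cleaner dichotomy, but the witnesses are the same.
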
 

\begin{proof} 
To prove that $\mathcal{S}_0$ is a base, we will prove (1) $\pk{A} \subseteq \bigcup \mathcal{S}_0$ and (2) if $z \in V_1 \cap V_2$ for $V_1,V_2 \in \mathcal{S}_0$, then $z \in V$ for some $V \in  \mathcal{S}_0$ such that $V \subseteq V_1 \cap V_2$.\vspace{2mm}

 (1) Let $x \in \pk{A}$. If $|x \cap \kappa|$ is not a regular limit cardinal, then $x \in \{x\} \in \mathcal{S}_0$. If  $|x \cap \kappa|$ is a regular limit cardinal,  then $x \in (z, x) \cup \{x\} = U_{z}(x) \in \mathcal{S}_0$, for any $z$ such that $z \subseteq x$ and  $|z|< |x \cap \kappa|$. Hence, $x \in  \bigcup \mathcal{S}_0$.\vspace{2mm}

(2) Let $z \in V_1 \cap V_2$, for some $V_1,V_2 \in \mathcal{S}_0$. If any of $V_1$ and $V_2$ are equal to $\{x\} \in \mathcal{S}_0$ for some $x\in \pk{A}$, then $V_1\cap V_2 = \{x\}$.  So that if we choose $V:=\{x\} \in \mathcal{S}_0$  we   trivially have  that  $V \subseteq V_1 \cap V_2$. \vspace{2mm}

If neither $V_1$ nor $V_2$ are equal to $\{x\} \in \mathcal{S}_0$ for any $x\in \pk{A}$. Then $V_1=U_x(y)$ and $V_2=U_{x'}(y')$, for some $x,x',y,y' \in \pk{A}$ such that $|y \cap \kappa|,|y' \cap \kappa|$ are regular limit cardinals. Now, $z \in U_x(y) \cap U_{x'}(y')$, this is, $z \in (x,y) \cup \{y\}$ and $ z \in (x',y') \cup \{y'\} $. Consider two cases:\vspace{2mm}
\quad\\
(i) $z \in \{y\} \cup \{y'\}$: W.l.g., let $z=y$. Then from  $ z \in (x',y') \cup \{y'\} $, we have that either  $z=y'$ or $z  \in (x',y') $.
\begin{itemize}
\item[a)]If $z =y'$: Then $y=y'=z$, and so we have $x,x' <^* y=z$. Then, $$z \in U_{x\cup x'}(y) \subseteq U_x(y) \cap U_{x'}(y)=U_x(y) \cap U_{x'}(y').$$ 
\item[b)] If $z \in (x',y')$: Then $x,x' <^*z=y <^*y' $, and so $z\in U_{x\cup x'}(y)\subseteq U_x(y) \cap U_{x'}(y)$. Now, notice that $U_{x}(y) \subseteq U_{x}(y') $, then we have $$z\in U_{x\cup x'}(y)\subseteq U_x(y) \cap U_{x'}(y) \subseteq U_x(y) \cap U_{x'}(y').$$
\end{itemize}\vspace{2mm}
(ii) $z \notin \{y\} \cup \{y'\}$: Then $z \in (x,y) \cap (x',y')$, and so $x,x' <^*z<^*y,y'$. First, suppose that $|z \cap \kappa|$ is not a regular limit cardinal. Then, $$z \in \{z\} \subseteq U_x(y) \cap U_{x'}(y')$$ and since $\{z\} \in \mathcal{S}_0$,  we are done.  Now, suppose $|z \cap \kappa|$ is a regular limit cardinal. From $x,x' <^*z$, we have $z \in U_{x\cup x'}(z) \subseteq U_x(z) \cap U_{x'}(z)$. And  from $z<^*y,y'$, we have, $U_x(z)\subseteq U_x(y)$ and $U_{x'}(z) \subseteq U_{x'}(y')$. Therefore,  $$z \in U_{x\cup x'}(z) \subseteq U_x(z) \cap U_{x'}(z)\subseteq U_x(y) \cap U_{x'}(y').\qedhere$$  
\end{proof}\vspace{1mm} 

We refer back to our preferred notation for Cantor's Derivative operator on subsets of $\pk{A}$. Given a topology $\tau$ in $\pk{A}$ and $X\subseteq \pk{A}$, we denoted $\partial_{\tau}(X):= \{x \in \pk{A} : x $ is a limit point of $X$ in $\tau \}$.\vspace{1mm}

\begin{prop}\label{s0=d0} 
For any $S \subseteq \pk{A}$, $d^s_0(S)=\partial_{\tau_0}(S)$.  
\end{prop}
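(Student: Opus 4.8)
The plan is to fix $S\subseteq\pk{A}$ and an arbitrary $x\in\pk{A}$ and show directly that $x\in d_0^s(S)$ (equivalently $x\in d_0(S)$, since $d_0^s=d_0$) if and only if $x$ is a $\tau_0$-limit point of $S$. I would split the argument according to whether $\mu:=|x\cap\kappa|$ is a regular limit cardinal. If it is not, then $\{x\}\in\mathcal{S}_0\subseteq\tau_0$, so $x$ is $\tau_0$-isolated and in particular is not a limit point of $S$; on the other hand $x\notin d_0^s(S)$ directly from Definition~\ref{nsst}(3), which requires $|x\cap\kappa|$ to be a regular limit cardinal. So both sides exclude such $x$, and henceforth I assume $\mu$ is a regular limit cardinal.

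Next I would show that $\{U_{x'}(x):x'<^*x\}$ is a neighbourhood base at $x$. Each such $U_{x'}(x)$ is defined (as $|x\cap\kappa|$ is a regular limit cardinal), lies in $\mathcal{S}_0$, hence is $\tau_0$-open, and contains $x$. Conversely, given a $\tau_0$-open $W\ni x$, Proposition~\ref{s0b} yields $V\in\mathcal{S}_0$ with $x\in V\subseteq W$; since the singletons in $\mathcal{S}_0$ are exactly the $\{w\}$ with $|w\cap\kappa|$ \emph{not} a regular limit cardinal, $V$ cannot be $\{x\}$, and $V\neq\emptyset$, so $V=U_{x''}(y)$ for some $x''<^*y$ with $|y\cap\kappa|$ a regular limit cardinal. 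From $x\in(x'',y)\cup\{y\}$ we get either $x=y$, in which case $V=U_{x''}(x)$ is already of the required form, or $x''<^*x<^*y$, in which case $U_{x''}(x)$ is defined and, by transitivity of $<^*$, $U_{x''}(x)\subseteq U_{x''}(y)=V\subseteq W$. Either way $W$ contains some $U_{x'}(x)$ with $x\in U_{x'}(x)$. Since $<^*$ is irreflexive, $x\notin(x',x)$, hence $U_{x'}(x)\cap(S\setminus\{x\})=(x',x)\cap S$; therefore $x$ is a $\tau_0$-limit point of $S$ if and only if $(x',x)\cap S\neq\emptyset$ for every $x'<^*x$.

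I would then prove that the condition ``$(x',x)\cap S\neq\emptyset$ for every $x'<^*x$'' is equivalent to ``$S\cap\pkld{\mu}{x}$ is $\subseteq$-cofinal in $\pkld{\mu}{x}$'', which by Definition~\ref{nsst}(3) is precisely $x\in d_0^s(S)$. For the forward implication, given $y\in\pkld{\mu}{x}$ we have $y\subseteq x$ and $|y|<\mu=|x\cap\kappa|$, so $y<^*x$; applying the hypothesis with $x':=y$ produces $z\in(y,x)\cap S$, and then $z\subseteq x$, $|z|<\mu$ give $z\in S\cap\pkld{\mu}{x}$ with $y\subseteq z$. For the converse, given $x'<^*x$, use that $\mu$ is a limit cardinal to fix $a\subseteq x\cap\kappa$ with $|a|=|x'|^+<\mu$, and set $y:=x'\cup a$; then $y\subseteq x$ and $|y|\leq|x'|^+<\mu$, so $y\in\pkld{\mu}{x}$, and cofinality gives $z\in S\cap\pkld{\mu}{x}$ with $y\subseteq z$. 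Now $z\subseteq x$ and $|z|<\mu$ give $z<^*x$, while $x'\subseteq z$ together with $|x'|<|x'|^+=|a|\leq|z\cap\kappa|$ give $x'<^*z$; hence $z\in(x',x)\cap S$, as desired.

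The only genuinely delicate step is the converse in the last paragraph: to witness $\subseteq$-cofinality one must manufacture a member $z$ of $S$ whose trace $z\cap\kappa$ is \emph{large enough} that $x'<^*z$, not merely $x'\subseteq z$. This is exactly where one exploits that $\mu$ is a limit cardinal (so $|x'|^+<\mu$) and that $x\cap\kappa$, though possibly not an ordinal, has cardinality $\mu$ and hence contains subsets of every cardinality below $\mu$. Everything else reduces to bookkeeping with the base $\mathcal{S}_0$ and with the transitivity and irreflexivity of $<^*$.
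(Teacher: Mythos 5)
Your proof is correct and follows essentially the same route as the paper's: reduce to the neighbourhood base $\{U_{x'}(x) : x'<^*x\}$ and translate the limit-point condition into $\subseteq$-cofinality of $S\cap\pkld{\mu}{x}$. If anything, your choice of a set $a\subseteq x\cap\kappa$ with $|a|=|x'|^+$ is more careful than the paper's use of the ordinal $|z|^+$ itself, which tacitly assumes $|z|^+\subseteq x$ even though $x\cap\kappa$ need not be an initial segment of $\kappa$.
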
 

\begin{proof} 
($\subseteq$) Suppose  $x \in d_0^s(S)$. Let $U$ be a basic open set containing $x$, clearly $U \neq \{x\}$ because $\mu:=|x \cap \kappa|$  is a regular limit cardinal. Then $x \in U= U_z(y)$ for some $y,z \in \pk{A}$ such that $|y \cap \kappa|$ is a regular limit  cardinal and $z <^*x$. In particular $x \in U_z(x) \subseteq  U_z(y)$. Since $z<^*x$, then $z \in \pkld{\mu}{x}$. Also, since $\mu$ is a limit cardinal,  $z \cup |z|^+ \subseteq x$ and $|z \cup |z|^+ |=|z|^+ < \mu$.  Then from  $x \in d_0^s(S)$, we get that there is $z_0 \in S \cap \pkld{\mu}{x}$ such that $z \cup |z|^+  \subseteq z_0$. This is $z \subseteq z_0$ and $|z| < |z|^+=|(z \cup |z|^+) \cap \kappa| \leq |z_0 \cap \kappa|$. Whence $z_0 \in (z,y)$, and so $z_0 \in U_z(y) \cap (S \setminus \{x\})$. 
\vspace{-1mm}

($\supseteq$) Suppose $x \in \partial_{\tau_0}(S)$, this is, $x$ is a limit point of $S$ in the topology $\tau_0$. Notice that $\mu:=|x \cap \kappa|$ is a regular limit cardinal, otherwise $\{x\} \in \tau_0$, and this would contradict that  $x$ is a limit point.  Let $y \in  \pkld{\mu}{x}$, then $y \subseteq x$ and $|y|<\mu= |x \cap \kappa|$, and so $x \in U_{y}(x) \in \mathcal{S}_0$. From the definition of $x$ being limit point, we get that  $U_{y}(x) \cap (S\setminus \{x\}) \neq \varnothing$. Then, there is some $z \in (y,x) \cap S$. Thus, $ z \in S \cap \pkld{\mu}{x}$ and $y \subseteq z$. Therefore $S \cap \pkld{\mu}{x}$ is  cofinal in $\pkld{\mu}{x}$. 
\end{proof}\vspace{2mm}

Therefore, for the case of $n=0$, our constructed topology emulates the behaviour of the order topology in ordinals concerning $0$-s-stationary reflection. Consequently, we select $\tau_0$ as the initial topology for our intended sequence of topologies $\langle \tau_o, \tau_1 , \dots \rangle $ on $\pk{A}$. As in \cite{B2}, given $\tau_n$ for $n <\omega$ we define $\partial_{\tau_n}$ to be the Cantor's derivative operator $\partial_{\tau_n}(S):=\{ x \in \pk{A} : x $ is a limit point of $S$ in the topology $\tau_n \}$. Set $\mathcal{B}_0:=\mathcal{S}_0$ and $\mathcal{B}_{n+1}:=\mathcal{B}_n \cup \{ \partial_{\tau_n}(S) : S \subseteq \pk{A}\}$. Finally, define $\tau_{n+1}$ to be the topology generated by $\mathcal{B}_{n+1}$. This is $\tau_{n+1}:= \langle \mathcal{B}_{n+1} \rangle$.\vspace{1mm}

\begin{definition}\label{subas} 
Given $n< \omega$,  define  
\begin{align*}
\mathcal{S}_{n+1}:= \{U_z(y) \cap  \mathop{\bigcap}\limits_{i=1}^{m} d_n^s(T_i) : \ &T_i \subseteq \pk{A}, \ 0\leq m < \omega, \text{ and  } z,y \in \pk{A}, \\ & |y \cap \kappa| \text{ a regular limit cardinal, and }z<^*y.\}.
\end{align*}
\end{definition}\vspace{1mm}

\begin{prop}\label{equuddc} 
For every $n< \omega$, $\mathcal{S}_{n+1}$ is a base for a topology in $\pk{A}$.
\end{prop}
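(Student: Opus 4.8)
The plan is to verify the two requirements for $\mathcal{S}_{n+1}$ to be a base recalled at the end of Section 2, namely that $\bigcup \mathcal{S}_{n+1} = \pk{A}$ and that whenever $w \in V_1 \cap V_2$ with $V_1, V_2 \in \mathcal{S}_{n+1}$ there is $V \in \mathcal{S}_{n+1}$ with $w \in V \subseteq V_1 \cap V_2$. The whole strategy is to reduce both to Proposition \ref{s0b}, which already does this for $\mathcal{S}_0$, together with the observation that the new factors $d_n^s(T_i)$ occur only intersected with one another, so refining them is never needed, and that a point lying in any such factor automatically has $|w\cap\kappa|$ a regular limit cardinal.

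First I would settle covering. Given $x \in \pk{A}$: if $\mu := |x\cap\kappa|$ is a regular limit cardinal then $\varnothing <^* x$, so $x \in U_\varnothing(x)$, and $U_\varnothing(x)$ is the $m = 0$ instance of Definition \ref{subas}; if $\mu$ is not a regular limit cardinal, then $x$ is covered by the singleton $\{x\}$, which is among the basic sets exactly as it is for $\mathcal{S}_0$. Hence $\bigcup\mathcal{S}_{n+1} = \pk{A}$.

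Next I would do refinement. Let $w \in V_1 \cap V_2$ with $V_k = U_{z_k}(y_k) \cap \bigcap_{i=1}^{m_k} d_n^s(T_i^k)$ for $k = 1,2$, and split into two cases. If $m_1 = m_2 = 0$ then $V_1, V_2 \in \mathcal{S}_0$, and Proposition \ref{s0b} produces $V \in \mathcal{S}_0$ with $w \in V \subseteq V_1 \cap V_2$; this $V$ is either a singleton or a set $U_z(y)$, and in either case $V \in \mathcal{S}_{n+1}$. If $m_1 + m_2 \geq 1$ then $w$ lies in some $d_n^s(T)$, so by Definition \ref{nsst}(3) the cardinal $|w\cap\kappa|$ is a regular limit cardinal; inspecting the proof of Proposition \ref{s0b}, when the common point has this property the member of $\mathcal{S}_0$ it returns for $U_{z_1}(y_1) \cap U_{z_2}(y_2)$ is always of the form $U_z(y)$ with $|y\cap\kappa|$ a regular limit cardinal and $z <^* y$ (never a singleton). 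Fixing such a $U_z(y)$ with $w \in U_z(y) \subseteq U_{z_1}(y_1)\cap U_{z_2}(y_2)$, I would set
$$ V := U_z(y) \cap \bigcap_{i=1}^{m_1} d_n^s(T_i^1) \cap \bigcap_{j=1}^{m_2} d_n^s(T_j^2), $$
which has the shape prescribed in Definition \ref{subas} (with $m_1 + m_2$ derived-set factors), contains $w$ since $w$ lies in every factor, and satisfies $V \subseteq V_1 \cap V_2$. This establishes both conditions.

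The step I expect to carry the (modest) weight of the argument is the last observation: the refining set handed back by Proposition \ref{s0b} is one of the sets $U_z(y)$ precisely in the regime where $|w\cap\kappa|$ is a regular limit cardinal, and that regime is exactly what any factor $d_n^s(T_i)$ forces; away from it one has $m_1 = m_2 = 0$ and is merely re-running Proposition \ref{s0b}. Everything else is routine: the $U_z(y)$ factors refine by Proposition \ref{s0b}, and the $d_n^s(T_i)$ factors simply accumulate inside $\mathcal{S}_{n+1}$. The only bookkeeping is to keep the singletons $\{x\}$ with $|x\cap\kappa|$ not a regular limit cardinal among the basic open sets, which is harmless.
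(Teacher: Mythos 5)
Your proof is correct and follows essentially the same route as the paper: reduce the interval parts to the refinement already established for $\mathcal{S}_0$, note that any point lying in a factor $d_n^s(T_i)$ has $|w\cap\kappa|$ a regular limit cardinal so that the refining set is a genuine $U_z(y)$ rather than a singleton, and carry all the derived-set factors along unchanged. The paper merely re-derives the interval refinement inline (taking $U_{z'\cup z''}(w)$ centred at the common point $w$) instead of citing the internals of the proof of Proposition \ref{s0b}, but the argument is the same.
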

 
\begin{proof} 
We will prove (1) $\bigcup \mathcal{S}_{n+1}= \pk{A}$ and (2) for every $x \in \pk{A}$ and $U,V \in \mathcal{S}_{n+1}$, if $x \in U\cap V$, there  is $W \in \mathcal{S}_{n+1}$ such that $x \in W \subseteq U \cap V$. \\
\quad\\
(1) Since $\mathcal{S}_0$ is a base, in particular, $\pk{A} = \bigcup \mathcal{S}_0$. Moreover, from Definition \ref{subas}, we get  that $\mathcal{S}_0 \subseteq \mathcal{S}_{n+1}$. Then  $\pk{A} =   \bigcup \mathcal{S}_0 \subseteq \bigcup \mathcal{S}_{n+1} \subseteq \pk{A}$.\\
\quad\\
(2) Let $x \in \pk{A}$, and take $U,V \in \mathcal{S}_{n+1}$ such that $x \in U\cap V$. Then, there are $U_{z'}(y'),U_{z''}(y'')\in \mathcal{S}_0$ such that 

\begin{center} 
$U = U_{z'}(y')\ \cap \ \mathop{\bigcap}\limits_{i=1}^{m_1} d_n^s(T^1_i)$, \ where \ $T^1_i \subseteq \pk{A}$ \ for \ $i\leq m_1<\omega$. 
\end{center}  

 \begin{center} 
 $V= U_{z''}(y'') \ \cap \ \mathop{\bigcap}\limits_{i=1}^{m_2} d_n^s(T^2_i)$, \ where \ $T^2_i \subseteq \pk{A}$ \ for \ $i \leq m_2<\omega$.
 \end{center} 
Setting $m:=m_1+m_2$, $T_i:=T^1_i$ for $1 \leq i \leq m_1$, and $T_{i+m_1}:=T^2_i$ for $1 \leq i \leq m_2$, we obtain
\begin{equation}
x \in U \cap V = U_{z'}(y') \ \cap \ U_{z''}(y'') \ \cap \ \mathop{\bigcap}\limits_{i=1}^{m} d_n^s(T_i).
\end{equation}
\quad\\
If $m=0$, then $U, V \in \mathcal{S}_0$, and using Proposition \ref{s0b}, we get $W\in \mathcal{S}_0\subseteq \mathcal{S}_{n+1}$ such that $x \in W \subseteq U \cap V.$\vspace{2mm}
\quad\\ 
If $m \geq1$, then $x \in d^s_{n}(T_1)$, and so  $|x \cap \kappa|$ is a regular limit cardinal. Note that $x \in U_{z'}(y') \cap U_{z''}(y'')$ implies $z',z'' <^* x <^* y,y'$. Thus, if $z:=z' \cup z'' $ and $y:= \min\{y',y''\}$, we have 

\begin{equation}
x \in U_z(x) \subseteq U_z(y) \subseteq U_{z'}(y) \cap U_{z''}(y)\subseteq U_{z'}(y') \cap U_{z''}(y'').
\end{equation}
\quad\\
Therefore, if $W:=  U_{z}(x) \cap \mathop{\bigcap}_{i=1}^{m} d_n^s(T_i)$, we clarly have $W\in \mathcal{S}_{n+1}$. Moreover, by equations (1) and (2), we get $x \in W = U_{z}(x) \ \cap \ \mathop{\bigcap}\limits_{i=1}^{m} d_n^s(T_i) \subseteq U \cap V. $ 
\end{proof}\vspace{1mm} 

\begin{definition}\label{hatsr}  
Let $n< \omega$. Define $\tau_n^*$ to be the topology generated by $\mathcal{S}_n$. Also, for any $S \subseteq \pk{A}$ let us write $\partial_n(S)$ for $ \partial_{\tau^*_n}(S)$.\vspace{1mm} 
\end{definition}

\begin{prop}\label{equuddc}  
If $n< \omega$ and  $S \subseteq \pk{A}$, then $d^s_n(S)= \partial_n(S)$. 
\end{prop}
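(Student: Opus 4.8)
The plan is to prove the identity $d_n^s(S)=\partial_n(S)$ by induction on $n<\omega$, mirroring the structure of Bagaria's corresponding argument in \cite{B2}. The base case $n=0$ is already Proposition \ref{s0=d0}, since $\tau_0^*=\tau_0$ and $\mathcal{S}_0$ is the base given there. So fix $n\geq 0$, assume the statement holds for $n$, and prove it for $n+1$. Throughout, recall that $\mathcal{S}_{n+1}$ is the base of $\tau_{n+1}^*$ consisting of sets of the form $U_z(y)\cap\bigcap_{i=1}^m d_n^s(T_i)$ with $|y\cap\kappa|$ a regular limit cardinal and $z<^*y$, and recall the convention that when $x$ ranges over a set $\pkld{\mu}{x}$ the operator $d_n^s$ is tacitly relativised, i.e. $d_n^s(T)\cap\pkld{\mu}{x}=d_n^s(\mu,x,T)$. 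A key preliminary observation I would record is that $d_n^s(S)\subseteq d_0^s(S)$ (Lemma \ref{nstimst}), so every $x\in d_{n+1}^s(S)$ already has $\mu:=|x\cap\kappa|$ a regular limit cardinal and $S\cap\pkld{\mu}{x}$ cofinal in $\pkld{\mu}{x}$; symmetrically, every limit point of $S$ in $\tau_{n+1}^*\supseteq\tau_0$ is a limit point in $\tau_0$, hence lies in $d_0^s(S)$ by Proposition \ref{s0=d0}.

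\textbf{The inclusion $d_{n+1}^s(S)\subseteq\partial_{n+1}(S)$.} Let $x\in d_{n+1}^s(S)$, so $\mu:=|x\cap\kappa|$ is a regular limit cardinal and $S\cap\pkld{\mu}{x}$ is $(n+1)$-s-stationary in $\pkld{\mu}{x}$. Take a basic $\tau_{n+1}^*$-open set $W\ni x$; since $\mu$ is a regular limit cardinal, $W$ cannot be a singleton, so $W=U_z(x)\cap\bigcap_{i=1}^m d_n^s(T_i)$ with $z<^*x$ (after shrinking $y$ to $x$ as in the proof of Proposition \ref{equuddc}). For each $i$ we have $x\in d_n^s(T_i)$, which gives (via the induction hypothesis or directly) that $T_i\cap\pkld{\mu}{x}$ is $n$-s-stationary in $\pkld{\mu}{x}$. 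Hence each $T_i\cap\pkld{\mu}{x}$ is, in particular, $n$-s-stationary in $\pkld{\mu}{x}$, so by Proposition \ref{eqc}(2) the set $S\cap\pkld{\mu}{x}\cap d_n^s(T_1)\cap\cdots\cap d_n^s(T_m)$ is $n$-s-stationary in $\pkld{\mu}{x}$ — here I use that $S\cap\pkld{\mu}{x}$ is $(n+1)$-s-stationary and each $T_i\cap\pkld{\mu}{x}$ is $n$-s-stationary with $n\leq n$. In particular this set is cofinal in $\pkld{\mu}{x}$, so since $z<^*x$ puts $z\in\pkld{\mu}{x}$ and $z\cup|z|^+\in\pkld{\mu}{x}$ (using $\mu$ a limit cardinal), there is $w$ in that set with $z\cup|z|^+\subseteq w$. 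Then $w\in S$, $w\in d_n^s(T_i)$ for each $i$, $z<^*w<^*x$, and $w\neq x$; thus $w\in W\cap(S\setminus\{x\})$. As $W$ was an arbitrary basic neighbourhood of $x$, $x\in\partial_{n+1}(S)$.

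\textbf{The inclusion $\partial_{n+1}(S)\subseteq d_{n+1}^s(S)$.} Let $x\in\partial_{n+1}(S)$. As noted, $\mu:=|x\cap\kappa|$ is a regular limit cardinal and $S\cap\pkld{\mu}{x}$ is cofinal in $\pkld{\mu}{x}$, i.e. $x\in d_0^s(S)$. To show $x\in d_{n+1}^s(S)$ we must verify, for every $m\leq n$ and every pair $T_1,T_2$ that are $m$-s-stationary in $\pkld{\mu}{x}$, that $S\cap\pkld{\mu}{x}\cap d_m^s(T_1)\cap d_m^s(T_2)\neq\varnothing$. Given such $T_1,T_2$, note $d_m^s(T_j)\cap\pkld{\mu}{x}=\partial_m(T_j)\cap\pkld{\mu}{x}$ — here, if $m\leq n$ I would like to invoke the induction hypothesis, but $d_m^s=\partial_m$ for $m<n+1$ follows from the induction hypothesis together with Lemma \ref{nstimst} and the fact that the topologies are increasing, so I would first establish the sublemma "$d_m^s(T)=\partial_m(T)$ for all $m\leq n$" as the genuine induction hypothesis (strong induction). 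Then $d_m^s(T_j)=\partial_m(T_j)\in\mathcal{B}_m\subseteq$ the base data used to form $\mathcal{S}_{n+1}$, more precisely $d_n^s$ of nothing is needed: actually $\mathcal{S}_{n+1}$ only features $d_n^s$, so to handle $m<n$ I rely on $d_m^s(T_j)\supseteq d_n^s(\text{suitable } n\text{-s-stationary refinement})$; cleaner is to argue: pick a basic neighbourhood $W=U_z(x)\cap d_n^s(X_1)\cap\cdots$ of $x$ where the $X_k$ are chosen so that $d_n^s(X_k)\cap\pkld{\mu}{x}\subseteq d_m^s(T_j)\cap\pkld{\mu}{x}$, which is possible because $T_j\cap\pkld{\mu}{x}$ is $m$-s-stationary hence (bootstrapping through $\pkld{\mu}{x}$ being $(n{+}1)$-s-stationary if it is; otherwise the claim is vacuous since then nothing is $(n{+}1)$-s-stationary) one takes $X_k=d_{n-1}^s$-style iterates — this matches Bagaria's trick. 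Since $x\in\partial_{n+1}(S)$, $W\cap(S\setminus\{x\})\neq\varnothing$ yields $w\in S\cap\pkld{\mu}{x}$ with $w\in d_m^s(T_1)\cap d_m^s(T_2)$, as required.

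\textbf{Main obstacle.} The delicate point is the second inclusion: the base $\mathcal{S}_{n+1}$ only refers to $d_n^s$, not to $d_m^s$ for $m<n$, so to extract membership in $d_m^s(T_j)$ from a $\tau_{n+1}^*$-neighbourhood one must engineer the right $n$-s-stationary sets whose $d_n^s$-images sit inside $d_m^s(T_j)$. This is exactly where Proposition \ref{eqc}(1) (closure of $n$-s-stationarity under intersection with $d_{m_i}^s$ of lower-level s-stationary sets) and the identity $d_n^s(d_m^s(X))\subseteq d_m^s(X)$ from Lemma \ref{lem1}(3) do the work — one uses $d_m^s(T_j)$ itself (which, when $\pkld{\mu}{x}$ is $(n{+}1)$-s-stationary, is $n$-s-stationary in $\pkld{\mu}{x}$ by \ref{eqc}(1), hence an admissible $X_k$) and Lemma \ref{lem1}(3) to see $d_n^s(d_m^s(T_j))\cap\pkld{\mu}{x}\subseteq d_m^s(T_j)\cap\pkld{\mu}{x}$. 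I would also need to handle the trivial case where $\pkld{\mu}{x}$ is not $(n{+}1)$-s-stationary separately, noting that then no subset of it is $(n{+}1)$-s-stationary and $x\notin d_{n+1}^s(S)$ can be checked directly against the fact that $x$ being a $\tau_{n+1}^*$-limit point forces the required reflection anyway — so in fact this degenerate branch does not arise. Modulo this bookkeeping, everything else is a routine unwinding of Definitions \ref{nsst}, \ref{subas} and the base-case Proposition \ref{s0=d0}.
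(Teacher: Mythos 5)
Your overall strategy coincides with the paper's: induction on $n$ with Proposition \ref{s0=d0} as the base case, the forward inclusion via Proposition \ref{eqc} and cofinality of $S\cap\pkld{\mu}{x}\cap\bigcap_i d_n^s(T_i)$, and the reverse inclusion by testing $x$ against suitably chosen basic neighbourhoods. The forward inclusion as you present it is essentially the paper's argument and is fine. In the reverse inclusion, your device for the case where $\pkld{\mu}{x}$ is $(n{+}1)$-s-stationary in itself --- taking $W=U_z(x)\cap d_n^s(d_m^s(T_1))\cap d_n^s(d_m^s(T_2))$, getting $x\in W$ from Proposition \ref{eqc}(1) and pulling the conclusion back through Lemma \ref{lem1}(3) --- is a legitimate, and in fact more scrupulous, realisation of the step the paper performs by simply declaring $U_z(x)\cap d_m^s(T_1)\cap d_m^s(T_2)\in\mathcal{S}_{n+1}$ for $m\le n$.

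The genuine gap is your treatment of the remaining case, where $\pkld{\mu}{x}$ is \emph{not} $(n{+}1)$-s-stationary in itself. You assert that ``this degenerate branch does not arise'' because being a limit point ``forces the required reflection anyway'', but no argument is given, and one is needed: for such $x$ we have $x\notin d_{n+1}^s(S)$ (no subset of a non-$(n{+}1)$-s-stationary set is $(n{+}1)$-s-stationary), so the inclusion $\partial_{n+1}(S)\subseteq d_{n+1}^s(S)$ demands a proof that $x$ is \emph{not} a $\tau_{n+1}^*$-limit point. The paper does this by fixing witnesses to the failure, namely $m\le n$ and $T_1,T_2$ that are $m$-s-stationary in $\pkld{\mu}{x}$ with $d_m^s(T_1)\cap d_m^s(T_2)\cap\pkld{\mu}{x}=\varnothing$, noting that $x\in d_m^s(T_1)\cap d_m^s(T_2)$ and hence that $U_z(x)\cap d_m^s(T_1)\cap d_m^s(T_2)=\{x\}$ is a basic open set isolating $x$. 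Your own machinery cannot substitute for this: the membership $x\in d_n^s(d_m^s(T_j))$ that you would need in order to build such a neighbourhood out of $\mathcal{S}_{n+1}$-sets is obtained from Proposition \ref{eqc}(1), which requires precisely the $(n{+}1)$-s-stationarity of $\pkld{\mu}{x}$ that fails here. So this case must be argued separately, as in the paper, and as written your proof of the reverse inclusion is incomplete.
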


\begin{proof} 
Proceed by induction on $n$. The case $n=0$ is precisely Proposition \ref{s0=d0}. Suppose the proposition holds for $n$ and let us prove it for $n+1$.\vspace{2mm}
  \quad\\
  ($\subseteq$) Let $x \in d^s_{n+1}(S)$. Then $\mu:=|x\cap \kappa|$ is a regular limit cardinal and $S \cap \pkld{\mu}{x}$ and $\pkld{\mu}{x}$ are $(n+1)$-s-stationary in $\pkld{\mu}{x}$. Let $U \in \mathcal{S}_{n+1}$ such that $x \in U$, we aim to prove that $U \cap (S \setminus \{x\})\neq \varnothing$. Notice that, $U=U_{z}(x) \cap \mathop{\bigcap}_{i=1}^{m} d_n^s(T_i)$ for some $T_i \subseteq \pk{A}$, $m < \omega$ and $U_z(y) \in \mathcal{S}_0$. Consider two cases\vspace{2mm}
\begin{itemize}
\item[(i)] ($m=0$) : Then $U \in \mathcal{S}_{0}$. By Proposition \ref{s0=d0}, we have that $x \in d^s_{n+1}(S) \subseteq d^s_{0}(S)=$ $ \partial_0(S)$. Thus, $x$ is a limit point of $S$ in $\tau_0$, and so $U \cap (S \setminus \{x\}) \neq \varnothing$.\vspace{2mm}

\item[(ii)] ($m\geq1$) : Then $x \in \mathop{\bigcap}_{i=1}^{m} d_n^s(T_i)$, and so $T_i \cap \pkld{\mu}{x} $ is $n$-s-stationary in $\pkld{\mu}{x} $ for all $i \in \{1, \dots,m\}$. Since $S \cap \pkld{\mu}{x}$ is  $(n+1)$-s-stationary in $\pkld{\mu}{x}$,  by Proposition \ref{eqc} (2), we get that $S \cap \pkld{\mu}{x} \cap \bigcap_{i=1}^{m} d_n^s(\pq{T_i \cap \pkld{\mu}{x}})$ is $n$-s-stationary in $ \pkld{\mu}{x}$, and in particular cofinal.\vspace{2mm}

Let $w  \in U_z(x) \subseteq U_z(y)$, then $w \in \pkld{\mu}{x}$. Thus, there is $w' \in S \cap  \pkld{\mu}{x}  \cap\mathop{\bigcap}_{i=1}^{m} d_n^s(T_i)$  such that $z \subseteq w \cup |w|^+ \subseteq w'$. Notice that $w \cap \kappa \subseteq (w \cup |w|^+) \cap \kappa \subseteq w' \cap \kappa$, and $|z|\leq |w| < |w|^+= | (w \cup |w|^+)\cap \kappa | \leq |w' \cap \kappa|$. Then $w' \in  U_z(y)$ and so

\begin{minipage}{\linewidth}    
\begin{eqnarray*} 
w' \in U_z(y)  \cap\mathop{\bigcap}_{i=1}^{m} d_n^s(T_i) \cap S =  U \cap S.
\end{eqnarray*}
\end{minipage}\vspace{2mm} 
Finally, notice that $w' \neq x$, because $w' \in \pkld{\mu}{x}$. Hence, $U \cap (S \setminus \{x\}) \neq \varnothing$.
\end{itemize}\vspace{2mm}
 ($\supseteq$) Let $x \in \partial_{n+1}(S)$. Then $x$ is limit point of $S$ in $\tau_{n+1}$. If $|x \cap \kappa|$ is not a regular limit cardinal, then $\{x\} \in \mathcal{S}_0 \subseteq \mathcal{S}_{n+1}$ and so $x$ cannot be a limit point of $S$ in $\tau_{n+1}$. Then $\mu:=|x \cap \kappa|$ is a regular limit cardinal. Now, consider two cases;
\begin{itemize}
\item[(i)] $\pkld{\mu}{x}$ is not $(n+1)$-s-stationary in $\pkld{\mu}{x}$: Then, there are $T_1,T_2 \subseteq \pkld{\mu}{x}$ that are $m$-s-stationary in $\pkld{\mu}{x}$ for some $m \leq n$, and such that $d^s_m(T_1) \cap d^s_m(T_2) \cap \pkld{\mu}{x} = \varnothing$. Recall that $U_z(x) \subseteq \pkld{\mu}{x} \cup \{x\}$, moreover $T_1,T_2$ are in particular subsets of $\pk{A}$, then $x \in d^s_m(T_1) \cap d^s_m(T_2)$. Thus,
\begin{minipage}{\linewidth}    
\begin{eqnarray*}
x &\in &U_z(x)  \cap d^s_m(T_1) \cap d^s_m(T_2) \\ & \subseteq & {\big(} \pq{\pkld{\mu}{x} \cup \{x\} }{\big)} \cap d^s_m(T_1) \cap d^s_m(T_2) \\ &=& {\big(}\pq{ \pkld{\mu}{x} \cap d^s_m(T_1) \cap d^s_m(T_2)} {\big)} \cup {\big(} \pq{\{x\} \cap d^s_m(T_1) \cap d^s_m(T_2)}  {\big)} \\ &=& \varnothing \cup \{x\} \\ &=& \{x\}.
\end{eqnarray*}
\end{minipage}\vspace{2mm} 
 This is, $ \{x\}= U_z(x)  \cap d^s_m(T_1) \cap d^s_m(T_2) \in \mathcal{S}_{n+1}$. And so in this case $x$ cannot be a limit point of $S$ in $\tau_{n+1}$.\vspace{2mm}

\item[(ii)] If $\pkld{\mu}{x}$ is $(n+1)$-s-stationary in $\pkld{\mu}{x}$: We aim to prove that $x \in d^s_{n+1}(S)$, namely, $S \cap  \pkld{\mu}{x}$ is $(n+1)$-s-stationary in $\pkld{\mu}{x}$. So let $m< n+1$ and $T_1, T_2 \subseteq \pkld{\mu}{x} $ be $m$-s-stationary in $\pkld{\mu}{x} $, notice that $x \in d^s_m(T_1) \cap d^s_m(T_2)$. Now, pick some $z<^*x$, then $x \in  U_z(x) \cap  d^s_m(T_1) \cap d^s_m(T_2) \in \mathcal{S}_{n+1}$. But $x$ is a limit point of $S$ in $\tau_{n+1}$. Then, 

\begin{minipage}{\linewidth}    
\begin{eqnarray*}
\varnothing & \neq  &d^s_m(T_1) \cap d^s_m(T_2) \cap U_z(x)  \cap {(} S \setminus \{x\}  {)} \\ & \subseteq & d^s_m(T_1) \cap d^s_m(T_2) \cap \big(\pkld{\mu}{x} \cup \{x\}\big)\cap {(} S \setminus \{x\}  {)} \\ & \subseteq & d^s_m(T_1) \cap d^s_m(T_2)\cap S \cap  \pkld{\mu}{x}
\end{eqnarray*}
\end{minipage}\vspace{2mm} 
This is, $S \cap  \pkld{\mu}{x}$ is $(n+1)$-s-stationary in $\pkld{\mu}{x}$. Hence, $x \in d^s_{n+1}(S)$. \qedhere
\end{itemize} 
\end{proof}\vspace{1mm} 

 \begin{corollary}
 Suppose $S,T \subseteq \pk{A}$ and $m \leq n < \omega$. Then $d_m^s(S) \cap d_n^s(T) = d_n^s(d_m^s(S) \cap T ).$
 \end{corollary}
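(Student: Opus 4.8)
The plan is to prove the two inclusions $d_m^s(S)\cap d_n^s(T)\subseteq d_n^s(d_m^s(S)\cap T)$ and $d_n^s(d_m^s(S)\cap T)\subseteq d_m^s(S)\cap d_n^s(T)$ separately, leaning on the machinery already developed, in particular Lemma \ref{lem1} and Proposition \ref{eqc}.

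For the inclusion $d_n^s(d_m^s(S)\cap T)\subseteq d_m^s(S)\cap d_n^s(T)$, I would argue as follows. By Lemma \ref{lem1} (1) we have $d_n^s(d_m^s(S)\cap T)\subseteq d_n^s(d_m^s(S))\cap d_n^s(T)$, and by Lemma \ref{lem1} (3) (using $m\le n$) we have $d_n^s(d_m^s(S))\subseteq d_m^s(S)$. Combining these two facts immediately yields $d_n^s(d_m^s(S)\cap T)\subseteq d_m^s(S)\cap d_n^s(T)$, so this direction is essentially free from the already-proved lemmas.

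The reverse inclusion $d_m^s(S)\cap d_n^s(T)\subseteq d_n^s(d_m^s(S)\cap T)$ is the substantive one. Take $x\in d_m^s(S)\cap d_n^s(T)$, so $\mu:=|x\cap\kappa|$ is a regular limit cardinal, $S\cap\pkld{\mu}{x}$ is $m$-s-stationary in $\pkld{\mu}{x}$, and $T\cap\pkld{\mu}{x}$ is $n$-s-stationary in $\pkld{\mu}{x}$. I must show $(d_m^s(S)\cap T)\cap\pkld{\mu}{x}$ is $n$-s-stationary in $\pkld{\mu}{x}$. Note that $d_m^s(S)\cap\pkld{\mu}{x}=d_m^s(S\cap\pkld{\mu}{x})\cap\pkld{\mu}{x}$ by the relativisation remark following Definition \ref{nsst} (the $d_n^s(\mu,x,T)=d_n^s(\kappa,A,T)\cap\pkld{\mu}{x}$ identity), so I am really working inside $\pkld{\mu}{x}$ with $S\cap\pkld{\mu}{x}$ which is $m$-s-stationary there. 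Now apply Proposition \ref{eqc} (2): since $\pkld{\mu}{x}$ itself is $n$-s-stationary in $\pkld{\mu}{x}$ — this follows because $T\cap\pkld{\mu}{x}\subseteq\pkld{\mu}{x}$ is $n$-s-stationary, and a superset of an $n$-s-stationary set is $n$-s-stationary — and $S\cap\pkld{\mu}{x}$ is $m$-s-stationary with $m\le n$, Proposition \ref{eqc} (2) (or (1), depending on the precise index juggling; if $m<n$ use (1) directly, if $m=n$ use the trivial observation that $d_n^s(S)\cap\pkld{\mu}{x}$ is $n$-s-stationary by \ref{eqc}(2)) tells us that $\pkld{\mu}{x}\cap d_m^s(S\cap\pkld{\mu}{x})$ is $n$-s-stationary in $\pkld{\mu}{x}$. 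Intersecting with the $n$-s-stationary set $T\cap\pkld{\mu}{x}$ and invoking Proposition \ref{eqc} (1) once more with the chain $d_m^s(S)$, $T$ both playing the role of lower-stationarity sets keeps $n$-s-stationarity, giving that $d_m^s(S)\cap T\cap\pkld{\mu}{x}$ is $n$-s-stationary in $\pkld{\mu}{x}$, i.e. $x\in d_n^s(d_m^s(S)\cap T)$.

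The main obstacle will be getting the bookkeeping of Proposition \ref{eqc} exactly right: that proposition is stated for a set $S$ that is $n$- or $(n+1)$-s-stationary together with $d_m^s$ of lower-level sets, and here the ambient "whole space" is $\pkld{\mu}{x}$ rather than $\pk{A}$, so I need to be careful to apply it relativised to $\pkld{\mu}{x}$ (which is legitimate since $\pkld{\mu}{x}$ is of the form $\pkld{\nu}{y}$ with $\nu=|\,(x\cap\kappa)\cap\kappa\,|$ a regular limit cardinal, and all the definitions and results in Sections 3–4 apply verbatim to $\pkld{\mu}{x}$ in place of $\pk{A}$). Once that relativisation is acknowledged, the argument is a direct combination of \ref{eqc}(1)–(2) and \ref{lem1}(1), and I would write it up in that order: first the easy inclusion via \ref{lem1}(1) and (3), then unfold $x\in d_m^s(S)\cap d_n^s(T)$, observe $\pkld{\mu}{x}$ is $n$-s-stationary in itself, apply \ref{eqc} to conclude $d_m^s(S)\cap T\cap\pkld{\mu}{x}$ is $n$-s-stationary in $\pkld{\mu}{x}$, and read off $x\in d_n^s(d_m^s(S)\cap T)$.
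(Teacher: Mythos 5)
Your easy inclusion coincides with the paper's: Lemma \ref{lem1} (1) and (3) give $d_n^s(d_m^s(S)\cap T)\subseteq d_n^s(d_m^s(S))\cap d_n^s(T)\subseteq d_m^s(S)\cap d_n^s(T)$. For the substantive inclusion you take a genuinely different route. The paper argues topologically: by Proposition \ref{equuddc} one identifies $d_n^s$ with the Cantor derivative $\partial_n$, and for a $\tau_n$-neighbourhood $U$ of $x$ one shrinks it to $U\cap d_m^s(S)\cap U_y(x)$, which is again open (here the fact that $d_m^s(S)$ is a basic open set of the derived topology is doing the work), so the limit-point property of $x$ with respect to $T$ immediately produces a point of $d_m^s(S)\cap T$ in $U$. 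You instead unfold the definitions inside $\pkld{\mu}{x}$ and invoke Proposition \ref{eqc}. For $m<n$ this is sound: the relativisation identity $d_m^s(S)\cap\pkld{\mu}{x}=d_m^s(S\cap\pkld{\mu}{x})\cap\pkld{\mu}{x}$ does hold (since $\pkld{\nu}{y}\subseteq\pkld{\mu}{x}$ whenever $y\in\pkld{\mu}{x}$ and $\nu=|y\cap\kappa|$), and then a single application of Proposition \ref{eqc} (1) inside $\pkld{\mu}{x}$ — with $T\cap\pkld{\mu}{x}$ as the ambient $n$-s-stationary set and $S\cap\pkld{\mu}{x}$ as the $m$-s-stationary parameter — already yields that $T\cap d_m^s(S)\cap\pkld{\mu}{x}$ is $n$-s-stationary there. (Your phrasing about $T$ ``playing the role of a lower-stationarity set'' is off: $T\cap\pkld{\mu}{x}$ must be the set $S$ of \ref{eqc}(1), not one of the $X_i$; but the intended application is correct and no second application is needed.) This combinatorial route is arguably more transparent than the topological one, at the cost of the relativisation bookkeeping you rightly flag.

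The genuine gap is the case $m=n$. Proposition \ref{eqc} (1) requires $m_i<n$ and is therefore unavailable, and your fallback to Proposition \ref{eqc} (2) is not legitimate: that clause requires the ambient set to be $(n+1)$-s-stationary, whereas all that $x\in d_n^s(T)$ gives you is that $\pkld{\mu}{x}$ is $n$-s-stationary in itself. This cannot be patched from the quoted machinery, and the obstruction is real rather than cosmetic. Already for $m=n=0$ and $S=T=\pk{A}$ the claimed identity reads $d_0^s(\pk{A})=d_0^s(d_0^s(\pk{A}))$; but if $x$ is such that $\mu=|x\cap\kappa|$ is, say, the least weakly inaccessible cardinal below $\kappa$, then $x\in d_0^s(\pk{A})$, while $d_0^s(\pk{A})\cap\pkld{\mu}{x}$ fails to be cofinal in $\pkld{\mu}{x}$ (a $y\supseteq z$ with $|y\cap\kappa|$ a regular limit cardinal requires such a cardinal in $[\,|z\cap\kappa|,\mu)$, and there is none once $|z\cap\kappa|>\omega$), so $x\notin d_0^s(d_0^s(\pk{A}))$. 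So the forward inclusion genuinely needs $m<n$; note that the paper's own proof tacitly needs this too, since $d_m^s(S)\cap U_y(x)$ lies in $\mathcal{S}_{m+1}$ and hence is only guaranteed open in $\tau_{m+1}$, which is contained in $\tau_n$ exactly when $m<n$. With the hypothesis sharpened to $m<n$, your argument, cleaned up as above, is a valid alternative proof.
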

 
\begin{proof} 
$\subseteq$) Let $x \in d_m^s(S) \cap d_n^s(T) $. Then by Proposition \ref{equuddc}, $x$ is a limit point of $T$ in the topology $\tau_n$. Moreover $x \in d_m^s(S) \cap U_y(x) \in \tau_m \subseteq \tau_n$ for some $y<^*x$. To prove that $x \in d_n^s(d_m^s(S) \cap T )$, we will prove that $x$ is a limit point of $d_m^s(S) \cap T$ in $\tau_n$. Let $U \in \tau_n$ be such that $x \in U$, then $x \in U \cap d_m^s(S) \cap U_y(x) \in \tau_n$. Then $$\varnothing \neq  U \cap d_m^s(S) \cap U_y(x)  \cap (T \setminus \{x\}) \subseteq   U \cap ((d_m^s(S) \cap T) \setminus \{x\}).$$ This is, $x $ is a limit point of f $d_m^s(S) \cap T$ in $\tau_n$, and so $x \in \partial_n(d_m^s(S) \cap T )=d_n^s(d_m^s(S) \cap T )$\vspace{2mm}
 \quad\\
 ($\supseteq$) By Lemma \ref{lem1}, $ d_n^s(d_m^s(S) \cap T ) \subseteq  d_n^s(d_m^s(S)) \cap d_n^s(T) \subseteq d_m^s(S) \cap d_n^s(T)$. 
 \end{proof}\vspace{1mm}

It is a standard fact that for a set $X$ and a collection $\mathcal{B}\subseteq \mathcal{P}(X)$ such that $\bigcup \mathcal{B}=X$, the topology generated by $\mathcal{B}$, denoted as $\langle \mathcal{B} \rangle$, consists of all unions of finite intersections of elements from $\mathcal{B}$. Define $\mathcal{B}_0^*:=\mathcal{S}_0$ and for all $n \leq \omega$, $\mathcal{B}_{n+1}^*:=\mathcal{B}_n^* \cup \{d^s_{n}(S) :  S \subseteq \pk{A} \}$. Then due to the way we defined  $ \mathcal{S}_n$ for  $n<\omega$, it follows immediately that $\tau_n^*=\langle \mathcal{S}_{n} \rangle=\langle \mathcal{B}_{n}^* \rangle$  for  all $n<\omega$.\\

We previously denoted the derived topologies on $\pk{A}$, as $\tau_{n}= \langle \mathcal{B}_{n} \rangle$, where $\mathcal{B}_0=\mathcal{S}_0$ and $\mathcal{B}_{n+1}=\mathcal{B}_n \cup \{ \partial_{\tau_n}(S) : S \subseteq \pk{A}\}$. We claim that, $\tau_n=\tau_n^*$ for all $n<\omega$. We can see this by  an easy induction on $n$. For $n=0$ both definitions are the same $\tau_0=\langle \mathcal{S}_{0} \rangle=\tau_0^*$. Suppose that $\tau_n=\tau_n^*$. Then $\partial_{\tau_n}(S)=\partial_{\tau_n^*}(S)=\partial_{n}(S)$ and $\langle \mathcal{B}_{n}^* \rangle=\langle \mathcal{B}_{n} \rangle$. By previous paragraph and Proposition \ref{equuddc}, we get  
\begin{minipage}{\linewidth}    
\begin{eqnarray*}
\tau_{n+1}^*&=&\langle \mathcal{B}^*_n  \cup \{ d_n^s(S) : S \subseteq \pk{A}\} \rangle= \langle \mathcal{B}^*_n \cup \{ \partial_{n} (S) : S \subseteq \pk{A}\} \rangle  \\  & = & \langle \mathcal{B}_n \cup \{ \partial_{n}(S) : S \subseteq \pk{A}\} \rangle= \langle  \langle \mathcal{B}_n \cup \{ \partial_{\tau_n}(S) : S \subseteq \pk{A}\} \rangle=\tau_{n+1}.
\end{eqnarray*}
\end{minipage}\vspace{2mm} 

 Therefore, for $S\subseteq \pk{A}$ and for the derived topology $\tau_n$ on $\pk{A}$, we have $d_{n}^s(S)=\partial_{\tau_n}(S)$. Consequently, we have established our desired correspondence between $n$-s-stationary sets of $\pk{A}$ and the derived topologies on $\pk{A}$ obtained by means of Cantor's derivative operator. Hence, we arrived to the analogue of Theorem 2.11 in \cite{B2}.\vspace{2mm}

 \begin{theorem}\label{t3}
 For every $n < \omega$, a point $x \in \pk{A}$ is not isolated in the $\tau_n$ topology on $\pk{A}$ if and only if $x$ is such that $\mu:=|x \cap \kappa|$ is a regular limit cardinal and $\pkld{\mu}{x}$ is $n$-s-stationary in $\pkld{\mu}{x}$. Thus, $\mathcal{B}_n$ generates a non-discrete topology on $\pk{A}$ if and only if some $x$ is such that $\mu:=|x \cap \kappa|$ is a regular limit cardinal and $\pkld{\mu}{x}$ is $n$-s-stationary in $\pkld{\mu}{x}$. \vspace{1mm}
 \end{theorem}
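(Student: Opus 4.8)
The plan is to deduce Theorem \ref{t3} directly from the identification $d_n^s(S)=\partial_{\tau_n}(S)$ established in the discussion preceding the statement, together with the characterisation of $d_n^s$ given in Definition \ref{nsst}(3). First I would recall that a point $x\in\pk{A}$ is isolated in $\tau_n$ exactly when $\{x\}\in\tau_n$, equivalently when $x\notin\partial_{\tau_n}(\pk{A})$; this last equivalence holds because $x$ is a limit point of the whole space $\pk{A}$ if and only if every $\tau_n$-open neighbourhood of $x$ meets $\pk{A}\setminus\{x\}$, i.e. no neighbourhood of $x$ is $\{x\}$. So "$x$ is not isolated in $\tau_n$" is literally "$x\in\partial_{\tau_n}(\pk{A})$".

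Next I would invoke the main correspondence, $\partial_{\tau_n}(S)=d_n^s(S)$ for all $S\subseteq\pk{A}$, applied to $S=\pk{A}$, to get $\partial_{\tau_n}(\pk{A})=d_n^s(\pk{A})$. By Definition \ref{nsst}(3),
\[
d_n^s(\pk{A})=\{x\in\pk{A}:\mu:=|x\cap\kappa|\text{ is a regular limit cardinal and }\pk{A}\cap\pkld{\mu}{x}\text{ is }n\text{-s-stationary in }\pkld{\mu}{x}\}.
\]
Since $\pkld{\mu}{x}\subseteq\pk{A}$ whenever $|x\cap\kappa|=\mu\leq|x|<\kappa$, we have $\pk{A}\cap\pkld{\mu}{x}=\pkld{\mu}{x}$, so the condition simplifies to: $\mu:=|x\cap\kappa|$ is a regular limit cardinal and $\pkld{\mu}{x}$ is $n$-s-stationary in $\pkld{\mu}{x}$. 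Chaining the three equivalences gives the first assertion of the theorem.

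For the second assertion, I would just observe that $\mathcal{B}_n$ generates a discrete topology on $\pk{A}$ if and only if every singleton is open, i.e. if and only if every $x\in\pk{A}$ is isolated in $\tau_n$; negating, $\tau_n$ is non-discrete iff some $x$ is not isolated, which by the first part happens iff some $x$ satisfies the stated condition on $\mu=|x\cap\kappa|$ and the $n$-s-stationarity of $\pkld{\mu}{x}$. I do not anticipate a serious obstacle here: all the substantive work is already contained in Proposition \ref{equuddc} and the paragraph identifying $\tau_n$ with $\tau_n^*$; the only point requiring a word of care is the trivial-looking but necessary remark that $\pk{A}\cap\pkld{\mu}{x}=\pkld{\mu}{x}$, so that the generic formula for $d_n^s$ collapses to exactly the phrasing in the theorem. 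One should also note explicitly that the map $S\mapsto\partial_{\tau_n}(S)$ used above is the Cantor derivative of the \emph{final} derived topology $\tau_n$, not of some auxiliary topology, which is precisely what was justified in the displayed computation showing $\tau_{n+1}=\tau_{n+1}^*$.
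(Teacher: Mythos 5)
Your proposal is correct and follows essentially the same route as the paper: both reduce the statement to the chain ``$x$ not isolated in $\tau_n$ iff $x\in\partial_{\tau_n}(\pk{A})=d_n^s(\pk{A})$'' and then unfold the definition of $d_n^s(\pk{A})$, with all the substantive content already carried by Proposition \ref{equuddc} and the identification $\tau_n=\tau_n^*$. Your extra remarks (that $\pk{A}\cap\pkld{\mu}{x}=\pkld{\mu}{x}$ and that the derivative is taken in the final topology $\tau_n$) are harmless clarifications of steps the paper leaves implicit.
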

 
 \begin{proof} 
 Notice that, $x\in \pk{A}$ is not isolated in the  in the $\tau_n$ topology on $\pk{A}$ if and only if $x$ is a limit point of $\pk{A}$ if and only if $x \in \partial_{\tau_n}(\pk{A})$. Moreover, $x \in \partial_{\tau_n}(\pk{A})= d_n^s(\pk{A})$ if and only if  $\mu:=|x \cap \kappa|$ is a regular limit cardinal and $\pkld{\mu}{x}$ is $n$-s-stationary in $\pkld{\mu}{x}$. 
 \end{proof}\vspace{1mm} 

\section{A sufficient condition for $n$-stationarity in $\pk{A}$}

Let $A,B$ be sets such that $\kappa \subseteq A,B$, and $|A|=|B|$. It is easy to show that, for any bijection $g: A \rightarrow B$, the function $g^*:\langle\pk{A},\subseteq\rangle\rightarrow\langle\pk{B},\subseteq\rangle$\ defined by $g^*(x):=g[x]=\{g(y) : y \in x\}$ is an isomorphism. Moreover, $g^*$ preserves cofinal, stationary and club sets (See \cite{J2}). We will refer to  $g^*$  as \textit{the isomorphism induced by} $g$. We will show in Lemma \ref{funt} that, there is also an isomorphism preserving higher stationary sets. \vspace{2mm}

\begin{lemma} 
Let $\kappa$ be a limit cardinal such that $\kappa \subseteq A,B$, and let $g: A \rightarrow B$ be a bijection. Then $C^g_{\kappa}:=\{x \in \pk{A}: g[x]\cap \kappa \subseteq x \cap \kappa \}$ is a club in $\pk{A}$.
\end{lemma}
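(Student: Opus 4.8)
The plan is to show directly that $C^g_\kappa = \{x \in \pk{A} : g[x] \cap \kappa \subseteq x \cap \kappa\}$ is closed and cofinal in $\pk{A}$, using the characterisation of club-ness via directed unions. I will first fix notation: write $g^*$ for the induced isomorphism, so $g^*(x) = g[x]$, and note that since $g$ is a bijection of $A$ onto $B$ and $\kappa \subseteq A, B$, for any $x \in \pk{A}$ we have $g[x] \in \pk{B} \subseteq \pk{A}$ (as $|g[x]| = |x| < \kappa$), so the defining condition makes sense.

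For \textbf{closure}, let $T \subseteq C^g_\kappa$ be directed with $|T| < \kappa$ and put $x := \bigcup T$; since $\kappa$ is regular and limit, $|x| < \kappa$, so $x \in \pk{A}$. Now $g[x] = g\big[\bigcup T\big] = \bigcup_{t \in T} g[t]$, hence $g[x] \cap \kappa = \bigcup_{t \in T}(g[t] \cap \kappa) \subseteq \bigcup_{t \in T}(t \cap \kappa) = x \cap \kappa$, using that each $t \in T$ lies in $C^g_\kappa$. Thus $x \in C^g_\kappa$, and closure follows from the fact (stated in Section 2) that $C$ is closed iff $\bigcup T \in C$ for every directed $T \subseteq C$ of size $< \kappa$.

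For \textbf{cofinality}, the natural move is a back-and-forth closure argument: given $y_0 \in \pk{A}$, build an increasing $\omega$-chain $y_0 \subseteq y_1 \subseteq \cdots$ in $\pk{A}$ with $g[y_n] \cap \kappa \subseteq y_{n+1} \cap \kappa$ at each step. Concretely, set $y_{n+1} := y_n \cup (g[y_n] \cap \kappa)$; since $|g[y_n] \cap \kappa| \le |g[y_n]| = |y_n| < \kappa$ and $\kappa$ is a limit cardinal, each $y_{n+1} \in \pk{A}$. Let $x := \bigcup_{n<\omega} y_n$; again $\kappa$ regular gives $x \in \pk{A}$, and clearly $y_0 \subseteq x$. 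Then $g[x] \cap \kappa = \bigcup_n (g[y_n] \cap \kappa) \subseteq \bigcup_n (y_{n+1} \cap \kappa) = x \cap \kappa$, so $x \in C^g_\kappa$ with $y_0 \subseteq x$. Hence $C^g_\kappa$ is cofinal, and combined with closure it is club.

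I do not expect a serious obstacle here; the only point requiring a little care is ensuring at every stage that the sets constructed remain of size $< \kappa$, which is exactly where the hypothesis that $\kappa$ is a (regular) limit cardinal is used — without limit-ness one would worry about $|g[y_n] \cap \kappa|$, but since this cardinality is at most $|y_n| < \kappa$ there is in fact no issue even with just regularity, and limit-ness is invoked only implicitly through the ambient standing assumptions (and to know $C_\kappa$-type sets behave well). If a cleaner write-up is desired, one can alternatively observe $C^g_\kappa \supseteq \{x : g[x] \cap \kappa \subseteq x\}$ and note the latter is the set of closure points of the function $x \mapsto g[x] \cap \kappa$, which is club by the standard argument; but the explicit chain construction above is self-contained and I would use it.
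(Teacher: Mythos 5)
Your proof is correct and follows essentially the same route as the paper: the identical $\omega$-chain $y_{n+1}:=y_n\cup(g[y_n]\cap\kappa)$ for cofinality, and the same computation $g[\bigcup T]\cap\kappa=\bigcup_t(g[t]\cap\kappa)\subseteq\bigcup_t(t\cap\kappa)$ for closure (the paper uses increasing chains where you use the equivalent directed-set characterisation). The only blemish is the aside ``$g[x]\in\pk{B}\subseteq\pk{A}$'', which is unjustified since $B$ need not be contained in $A$; it is also unnecessary, as $g[x]\cap\kappa$ is a small subset of $\kappa\subseteq A$ regardless.
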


\begin{proof}  
(i) $C^g_{\kappa}$ is unbounded: Let $y_0 \in \pk{A}$. Given $ y_n \in \pk{A}$, since $|g[y_n]| = |y_n| < \kappa$ and $g[y_n] \cap \kappa \subseteq \kappa$, then $g[y_n] \cap \kappa \in \pk{A}$. Thus, let $y_{n+1} := y_n \cup (g[y_n] \cap \kappa) \in \pk{A}$. Define $z := \bigcup_{n < \omega} y_n$. Clearly $y_0 \subseteq z \in \pk{A}$. Besides, $z \in C_{\kappa}^g$, because  $g[z] \cap \kappa \subseteq z \cap \kappa$ follows from 
\begin{align*} 
g[\bigcup_{n<\omega}y_n] \cap \kappa & =  \big( \bigcup_{n<\omega}g[y_n] \big) \cap \kappa  =  \big( \bigcup_{n<\omega}g[y_n]  \cap \kappa \big) \cap \kappa \subseteq  \big( \bigcup_{n<\omega}y_{n+1} \big) \cap \kappa. 
\end{align*}
(ii) $C^g_{\kappa}$ is closed: Let $\langle x_{\alpha} : \alpha < \gamma \rangle$ be a sequence of elements of $C^g_{\kappa}$.  $\bigcup_{\alpha< \gamma}x_{\alpha} \in C^g_{\kappa}$, follows from  
\begin{align*} 
g[\bigcup_{\alpha< \gamma}x_{\alpha}] \cap \kappa =\big( \bigcup_{\alpha< \gamma}g[x_{\alpha}] \big) \cap \kappa =  \bigcup_{\alpha< \gamma} (g[x_{\alpha}] \cap \kappa) \subseteq  \bigcup_{\alpha< \gamma} (x_{\alpha} \cap \kappa)= \big( \bigcup_{\alpha< \gamma} x_{\alpha} \big)\cap \kappa. 
\end{align*}
\end{proof} 

\begin{lemma}\label{funt}  
Let $A,B$ be sets such that $\kappa \subseteq A,B$, and let $g: A \rightarrow B$ be a bijection such that $g(\gamma)=\gamma$ for all $\gamma \in c(\kappa):=\{ \gamma < \kappa : \gamma $ is a cardinal$\}$. Then for all $n < \omega$, the induced isomorphism  $f:=g^*: \langle \pk{A} , \subseteq \rangle \rightarrow \langle \pk{B} , \subseteq \rangle $ preserves $n$-s-stationary sets. This is, $S$ is $n$-s-stationary in $ \pk{A}$ if and only if $f[S]$ is $n$-s-stationary in $ \pk{B}$.
\end{lemma}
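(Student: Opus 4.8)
The plan is to argue by induction on $n<\omega$, simultaneously establishing the stronger fact that $f$ commutes with the derived operators: for every $T\subseteq\pk{A}$ and every $n$, $f[d_n^s(T)] = d_n^s(f[T])$ (where the left-hand $d_n^s$ is computed in $\pk{A}$ and the right-hand one in $\pk{B}$). The desired statement about $n$-s-stationary sets will then follow by tracking how the definition of $n$-s-stationarity unwinds: $S$ is $n$-s-stationary in $\pk{A}$ iff for all $m<n$ and all $m$-s-stationary $T_1,T_2$ we have $S\cap d_m^s(T_1)\cap d_m^s(T_2)\neq\varnothing$, and $f$ is a bijection, so applying $f$ turns this into the analogous statement in $\pk{B}$ provided we know the $d_m^s$ sets correspond and the $m$-s-stationary sets in $\pk{B}$ are exactly the $f$-images of those in $\pk{A}$ — which is precisely the induction hypothesis at level $m<n$.

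The base case $n=0$ is where the hypotheses on $g$ do the work. First I would observe that since $g$ fixes every cardinal $\gamma<\kappa$ pointwise, $g$ also fixes $\kappa$ setwise; combined with the previous lemma applied to both $g$ and $g^{-1}$, one gets that on the club $C^g_\kappa\cap C^{g^{-1}}_\kappa\cap C_\kappa$ the map $f$ satisfies $f(x)\cap\kappa = x\cap\kappa$, hence $|f(x)\cap\kappa|=|x\cap\kappa|$, and in particular $f$ preserves the property "$|x\cap\kappa|$ is a regular limit cardinal." Since $f$ is an $\subseteq$-isomorphism it maps $\pkld{\mu}{x}$ bijectively onto $\pkld{\mu}{f(x)}$ and preserves cofinality of subsets; restricting attention to these clubs (using that $f$ preserves club and stationary sets, and that intersecting an $n$-s-stationary set with a club preserves $n$-s-stationarity by Proposition~\ref{new2}) gives $f[d_0^s(T)] = d_0^s(f[T])$. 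Then in the inductive step, for $x\in d_{n+1}^s(T)$ with $\mu=|x\cap\kappa|$ a regular limit cardinal and $T\cap\pkld{\mu}{x}$ being $(n+1)$-s-stationary in $\pkld{\mu}{x}$, I apply the isomorphism $f\restriction\pkld{\mu}{x}:\pkld{\mu}{x}\to\pkld{\mu}{f(x)}$ together with the induction hypothesis — noting that this restricted map is itself an induced isomorphism of the same type between $\pkld{\mu}{x}$ and $\pkld{\mu}{f(x)}$, so the hypothesis applies — to transport $(n+1)$-s-stationarity across.

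The main obstacle I anticipate is the bookkeeping needed to make "$f$ restricted to $\pkld{\mu}{x}$ is again an induced isomorphism satisfying the hypotheses" precise, so that the induction hypothesis can legitimately be invoked at the smaller structure $\pkld{\mu}{x}$; this requires checking that $f$ maps $\pkld{\mu}{x}$ onto $\pkld{\mu}{f(x)}$ (which uses $f(x)\cap\kappa = x\cap\kappa$ for $x$ in the relevant club, hence the same $\mu$) and that the relevant generating bijection still fixes the cardinals below $\mu$. The second subtlety is that the clean identity $f(x)\cap\kappa=x\cap\kappa$ only holds on a club, not on all of $\pk{A}$, so every application of the induction hypothesis must be preceded by intersecting with that club and invoking Proposition~\ref{new2} (and Lemma~\ref{lem1}(4)) to know this does not disturb $n$-s-stationarity or the derived sets; organizing the argument so these intersections are handled uniformly, rather than re-derived at each level, is the part that needs care.
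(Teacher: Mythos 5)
Your plan is essentially the paper's proof: induction on $n$, intersecting with the club $C_{\kappa}\cap C^g_{\kappa}$ (via Proposition \ref{new2}) so that $|f(x)\cap\kappa|=|x\cap\kappa|$ for the witnesses produced, and transporting $m$-s-stationarity across the restricted isomorphism $\pkld{\mu}{x}\to\pkld{\mu}{f(x)}$ by the induction hypothesis. The only caution is that the unrestricted identity $f[d_n^s(T)]=d_n^s(f[T])$ you announce as the induction target is false as literally stated --- off the club, $g$ may move enough non-cardinal elements of $x$ in or out of $\kappa$ that $|f(x)\cap\kappa|\neq|x\cap\kappa|$, so $f(x)$ can fail the ``regular limit cardinal'' clause of $d_n^s$ --- but you yourself note that the commutation is only claimed after intersecting with the club, which is exactly the form in which the paper uses it.
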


\begin{proof} 
Proceed by induction on $n$. The case $n=0$ follows from the fact that $g^*$ preserves cofinal sets. So, let $n\geq1$ and assume the theorem holds for all $m < n$. Let $S\subseteq \pk{A}$ be $n$-s-stationary in $\pk{A}$ and let $C:= C_{\kappa} \cap C^g_{\kappa}$. Clearly $C$ is a club in $\pk{A}$, then by Proposition \ref{new2}, $S\cap C$ is $n$-s-stationary in $\pk{A}$. Take $m<n$ and   $T_1,T_2 \subseteq \pk{B}$ $m$-s-stationary in $\pk{B}$. By induction Hypothesis, $f^{-1}[T_1],f^{-1}[T_2] \subseteq \pk{A}$ are $m$-s-stationary in $\pk{A}$. Then, there is $x \in S \cap C$ such that $\mu := |x \cap \kappa|$ is a regular limit cardinal and $f^{-1}[T_1]\cap \pkld{\mu}{x}$,  $f^{-1}[T_2]\cap \pkld{\mu}{x}$ are $m$-s-stationary in $\pkld{\mu}{x}$. Because $x \in C\subseteq C_{\kappa}$, we actually know that $\mu= |x \cap \kappa|=x \cap \kappa$.\vspace{2mm}

Using induction hypothesis again,  we have that, for $i=1,2$, the set  $$f[f^{-1}[T_i]\cap \pkld{\mu}{x}]=f[f^{-1}[T_i]]\cap f[\pkld{\mu}{x}]=T_i \cap\pkld{\mu}{f(x)}$$ is $m$-s-stationary in $\pkld{\mu}{f(x)}$. Since $x \in  C\subseteq C^g_{\kappa}$, then  $|f(x) \cap \kappa|=|g[x] \cap \kappa| \leq | x\cap \kappa |= \mu$. Moreover, as  $x\cap \kappa$ is a regular limit cardinal, we have $|c(x\cap \kappa)|=|x\cap \kappa|$. Finally, by definition of $g$,  $\gamma \in  c(x\cap \kappa) \subseteq x$ implies $\gamma = g(\gamma) \in g[x] \cap \kappa= f(x) \cap \kappa$. Thus, $g[c(x\cap \kappa)]\subseteq f(x)\cap \kappa$. Combining all of the above, we obtain
 $$\mu = |x \cap \kappa|= |c(x\cap \kappa)|=|g[c(x\cap \kappa)]|\leq|f(x)\cap \kappa|\leq \mu.$$

Therefore, $f(x)\in f[S]$ is such that $\mu=|f(x)\cap \kappa|$ is a regular limit cardinal and $T_i \cap\pkld{\mu}{f(x)}$ is $m$-s-stationary in $\pkld{\mu}{f(x)}$ for $i=1,2$. This is, $f[S]\subseteq \pk{B}$ is $n$-s-stationary in $\pk{B}$. The converse is analogous. 
\end{proof} \vspace{1mm}

\begin{lemma}\label{funt2}   
Let $A,B$ be sets such that $\kappa \subseteq A,B$, and let $g: A \rightarrow B$ be a bijection such that $g(\gamma)=\gamma$ for all $\gamma \in c(\kappa)$. Then, for all $n < \omega$, the induced isomorphism $g^*: \langle \pk{A} , \subseteq \rangle \rightarrow \langle \pk{B} , \subseteq \rangle $ preserves $n$-stationary sets. 
\end{lemma}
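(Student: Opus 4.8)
The plan is to imitate the proof of Lemma \ref{funt} almost verbatim, the only change being that one works with a single $m$-stationary set $T$ in place of a pair $T_1,T_2$ of $m$-s-stationary sets. I argue by induction on $n$. For $n=0$ the claim is exactly the well-known fact that an induced isomorphism preserves cofinal sets. For the inductive step, assume the statement for all $m<n$ and let $S\subseteq\pk{A}$ be $n$-stationary in $\pk{A}$. As in Lemma \ref{funt}, put $C:=C_{\kappa}\cap C^g_{\kappa}$; this is a club in $\pk{A}$ by the previous lemma, so by Proposition \ref{new20} the set $S\cap C$ is again $n$-stationary in $\pk{A}$.

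Now fix $m<n$ and a set $T\subseteq\pk{B}$ that is $m$-stationary in $\pk{B}$. By the induction hypothesis $f^{-1}[T]\subseteq\pk{A}$ is $m$-stationary in $\pk{A}$, so $n$-stationarity of $S\cap C$ yields some $x\in S\cap C$ with $\mu:=|x\cap\kappa|$ a regular limit cardinal and $f^{-1}[T]\cap\pkld{\mu}{x}$ $m$-stationary in $\pkld{\mu}{x}$. Since $x\in C\subseteq C_{\kappa}$ we have $\mu=x\cap\kappa$. Applying the induction hypothesis once more, exactly as in Lemma \ref{funt}, the set $f[f^{-1}[T]\cap\pkld{\mu}{x}]=T\cap\pkld{\mu}{f(x)}$ is $m$-stationary in $\pkld{\mu}{f(x)}$.

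It remains to check that $\mu=|f(x)\cap\kappa|$, and here the computation is literally the one carried out in the proof of Lemma \ref{funt}: from $x\in C^g_{\kappa}$ we get $|f(x)\cap\kappa|\le\mu$; since $x\cap\kappa$ is a regular limit cardinal, $|c(x\cap\kappa)|=|x\cap\kappa|$; and $g$ fixing $c(\kappa)$ forces $g[c(x\cap\kappa)]\subseteq f(x)\cap\kappa$, whence
$$\mu=|x\cap\kappa|=|c(x\cap\kappa)|=|g[c(x\cap\kappa)]|\le|f(x)\cap\kappa|\le\mu.$$
Thus $f(x)\in f[S]$ is a point with $\mu=|f(x)\cap\kappa|$ a regular limit cardinal for which $T\cap\pkld{\mu}{f(x)}$ is $m$-stationary in $\pkld{\mu}{f(x)}$; as $m<n$ and $T$ were arbitrary, $f[S]$ is $n$-stationary in $\pk{B}$. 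The converse follows by running the same argument with $g^{-1}$ in place of $g$.

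I do not anticipate any genuine obstacle here: the only thing to be careful about is that each ingredient borrowed from the proof of Lemma \ref{funt} — the club $C^g_{\kappa}$, the reflection-stability of club intersections (Proposition \ref{new20}, already stated in its non-simultaneous form), and the cardinality computation above — either makes no use of simultaneity or is available in the required single-set version, so the adaptation is entirely routine.
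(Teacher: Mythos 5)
Your proposal is correct and is exactly the argument the paper intends: the paper's own proof of Lemma \ref{funt2} simply says to repeat the proof of Lemma \ref{funt}, using Proposition \ref{new20} for the club intersection and a single $m$-stationary $T$ in place of the pair $T_1,T_2$. You have merely written out in full the same routine adaptation, including the identical cardinality computation showing $\mu=|f(x)\cap\kappa|$.
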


\begin{proof} 
Similar to proof of Lemma \ref{funt}. Using Proposition \ref{new20} to prove  that $S\cap  C_{\kappa} \cap C^g_{\kappa}$ is $n$-stationary in $\pk{A}$, and taking  only one set $T\subseteq \pk{B}$ that is $m$-stationary in $\pk{B}$.
\end{proof} \vspace{1mm}

\begin{prop}  
Let $A,B$ be sets such that $\kappa \subseteq A,B$ and $|A|=|B|$. Then, there is an isomorphism $f: \langle \pk{A} , \subseteq \rangle \rightarrow \langle \pk{B} , \subseteq \rangle $ preserving cofinal, stationary, club, $n$-stationary and $n$-s-stationary sets.
\end{prop}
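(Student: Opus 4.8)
The plan is to reduce the statement entirely to Lemmas \ref{funt} and \ref{funt2}: it suffices to exhibit a bijection $g\colon A\to B$ with $g(\gamma)=\gamma$ for every $\gamma\in c(\kappa)=\{\gamma<\kappa:\gamma\text{ is a cardinal}\}$, and then take $f:=g^{*}$ to be the induced isomorphism of $\langle\pk{A},\subseteq\rangle$ onto $\langle\pk{B},\subseteq\rangle$. The only point needing any care is the cardinal bookkeeping that lets us choose $g$ to restrict to the identity on $c(\kappa)$.

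First I would verify that such a $g$ exists. Since $\kappa\subseteq A$ and $\kappa\subseteq B$ we have $c(\kappa)\subseteq\kappa\subseteq A\cap B$, so $A=c(\kappa)\,\dot\cup\,(A\setminus c(\kappa))$ and likewise for $B$, and a bijection $A\to B$ fixing $c(\kappa)$ pointwise exists precisely when $|A\setminus c(\kappa)|=|B\setminus c(\kappa)|$. Now the set $\kappa\setminus c(\kappa)$ of non-cardinal ordinals below the uncountable limit cardinal $\kappa$ has cardinality $\kappa$, and it is contained in $A\setminus c(\kappa)$, so $|A\setminus c(\kappa)|\geq\kappa\geq|c(\kappa)|$; hence $|A|=\max\{|A\setminus c(\kappa)|,|c(\kappa)|\}=|A\setminus c(\kappa)|$, and the same reasoning gives $|B|=|B\setminus c(\kappa)|$. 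Therefore $|A\setminus c(\kappa)|=|A|=|B|=|B\setminus c(\kappa)|$. Fixing a bijection $h\colon A\setminus c(\kappa)\to B\setminus c(\kappa)$, the map $g:=h\cup\mathrm{id}_{c(\kappa)}\colon A\to B$ is a bijection with $g(\gamma)=\gamma$ for all $\gamma\in c(\kappa)$.

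Then I would conclude as follows. By the standard facts recalled at the start of this section, $f:=g^{*}\colon\langle\pk{A},\subseteq\rangle\to\langle\pk{B},\subseteq\rangle$ is an isomorphism preserving cofinal, stationary and club sets (\cite{J2}). Because $g$ fixes $c(\kappa)$ pointwise, Lemma \ref{funt} applies to $g$ and shows that $f$ preserves $n$-s-stationary sets for all $n<\omega$, and Lemma \ref{funt2} shows that $f$ preserves $n$-stationary sets for all $n<\omega$. This gives exactly the claimed list of preserved properties, so $f$ is the desired isomorphism. The real work sits in Lemmas \ref{funt} and \ref{funt2}, which are already proved; thus I do not expect a genuine obstacle here, only the mild arithmetic above ensuring the bijection can be taken to be the identity on $c(\kappa)$.
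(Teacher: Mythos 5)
Your proposal is correct and follows essentially the same route as the paper's own proof: reduce to exhibiting a bijection $g\colon A\to B$ fixing $c(\kappa)$ pointwise via the cardinality computation $|A\setminus c(\kappa)|=|A|=|B|=|B\setminus c(\kappa)|$, and then invoke the standard preservation facts for $g^{*}$ together with Lemmas \ref{funt} and \ref{funt2}. Your justification that $|A|=|A\setminus c(\kappa)|$ is in fact slightly more detailed than the paper's.
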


\begin{proof} 
By Lemma \ref{funt} and Lemma \ref{funt2}, it is enough to prove that there is a bijection $g: A \rightarrow B$ such that $g(\gamma):=\gamma$ for all $\gamma \in c(\kappa)$. So, let us prove that: Since $\kappa$ is a regular limit cardinal, $|c(\kappa)|= |\kappa \setminus c(\kappa)|=\kappa$. Then $|A|=|A \setminus c(\kappa)|=|B \setminus c(\kappa)|=|B|$, and so there is a bijection $h:  A\setminus c(\kappa) \rightarrow B\setminus c(\kappa)$. Now, define $g: A \rightarrow B$, by $g(\gamma):=\gamma$ for all $\gamma \in c(\kappa)$ and $g(x):=h(x)$ for all $x \in A\setminus c(\kappa)$.
\end{proof} \vspace{1mm} 

 Thus, if $|A|= \lambda \geq \kappa$, the study of $\pk{A}$ is equivalent to that of $\pk{\lambda}$ (usually written as $\pkl$). In this section we will first  show a sufficient condition to have that $\pk{\kappa}$ is $n$-s-stationary and $n$-stationary  in $\pk{\kappa}$ for some $n< \omega$ (Theorem \ref{inds}). And second, a sufficient condition to have that $\pkl$ is $n$-s-stationary and $n$-stationary  in $\pkl$ for all $n< \omega$ (Theorem \ref{lsc}). The second condition arrived as a more general version of a Proposition originally stated by Sakai et al. in \cite{S1}. Consequently, the proof of this second condition also provides, as a special case, a proof of Sakai et al.'s original proposition. \vspace{2mm}

\begin{prop}\label{p320bg}  
Let $\kappa$ be a regular limit cardinal. Then, 
\begin{itemize}
\item[(1)] The formula $\varphi_n(S) : ``S \subseteq \pk{\kappa}$ is $n$-s-stationary in $\pk{\kappa}"$ is  $\Pi^1_n $   over $\langle V_{\kappa}, \in , S \rangle$.
 \item[(2)] Let $x \in  \pk{\kappa}$ be such that $|x \cap \kappa|$ is a regular limit cardinal. Then  $\varphi'_n(T,x) : ``T  \cap \pkld{|x \cap \kappa|}{x}$ is $n$-s-stationary in $\pkld{|x \cap \kappa|}{x}"$ is a $\Pi^1_0$ sentence over $\langle V_{\kappa}, \in  \rangle$, in the parameters $T,x$.
\end{itemize}
\end{prop}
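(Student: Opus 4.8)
The plan is to prove both statements simultaneously by induction on $n$, reading off the quantifier complexity directly from Definition \ref{nsst}. First I would fix the coding conventions: since $\kappa$ is a regular limit cardinal, $\pk{\kappa} \subseteq V_{\kappa}$ and $V_{\kappa} = H_{\kappa}$, so quantifying over elements of $\pk{\kappa}$ (or over $\pkld{\mu}{x}$ for $\mu = |x\cap\kappa|$ a regular limit cardinal, which then lies in $\pk{\kappa}$) is a first-order quantifier over $\langle V_{\kappa}, \in\rangle$, while quantifying over arbitrary subsets $T\subseteq \pk{\kappa}$ is a genuine second-order (type-1) quantifier. The statement ``$\mu := |x\cap\kappa|$ is a regular limit cardinal'' is first-order over $V_{\kappa}$, as is ``$S$ is cofinal in $\pk{\kappa}$''. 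With these conventions, for $n=0$ both $\varphi_0$ and $\varphi'_0$ assert cofinality, which is $\Pi^1_0$ (indeed first-order), giving the base case.

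For the inductive step, unwind the definition: $S$ is $(n+1)$-s-stationary iff for every $m \le n$ and all $T_1, T_2 \subseteq \pk{\kappa}$ that are $m$-s-stationary, there is $x\in S$ with $|x\cap\kappa|$ a regular limit cardinal and $T_i \cap \pkld{|x\cap\kappa|}{x}$ both $m$-s-stationary in $\pkld{|x\cap\kappa|}{x}$. The outer ``$\forall T_1,T_2$'' is a universal type-1 quantifier. Its matrix has the form ``($T_1,T_2$ are $m$-s-stationary) $\to$ ($\exists x\in S$ \dots)''. The antecedent, by (1) at level $m \le n$, is $\Pi^1_m$ (and $\Pi^1_n$ a fortiori). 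The consequent begins with a first-order $\exists x$ over $V_{\kappa}$ followed by ``$T_i \cap \pkld{\mu}{x}$ is $m$-s-stationary in $\pkld{\mu}{x}$'', which is exactly $\varphi'_m(T_i, x)$; and for (2) I would prove that $\varphi'_m$ is $\Pi^1_m$ over $\langle V_{\kappa}, \in\rangle$ in the parameters $T, x$ — for $m=0$ it is $\Pi^1_0$, and for $m \ge 1$ it is obtained from the $\Pi^1_m$ formula $\varphi_m$ by relativising the type-1 quantifiers to $\mathcal P(\pkld{\mu}{x})$, which does not raise complexity since $\pkld{\mu}{x} \in V_{\kappa}$ and this relativisation is first-order expressible. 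Hence the consequent is $\Sigma^1_{m+1} \subseteq \Sigma^1_{n+1}$, the matrix ``$\Pi^1_n \to \Sigma^1_{n+1}$'' is $\Sigma^1_{n+1}$, and prefixing the universal type-1 quantifier $\forall T_1, T_2$ yields a $\Pi^1_{n+1}$ formula. This closes the induction for (1); part (2) at level $n+1$ follows by the same relativisation argument applied to $\varphi_{n+1}$.

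The main obstacle, and the point requiring the most care, is the bookkeeping for part (2): I must verify that restricting attention to subsets of $\pkld{\mu}{x}$ rather than of $\pk{\kappa}$ genuinely does not change the quantifier type, i.e. that ``$T' \subseteq \pkld{\mu}{x}$'' and the relativisations of all nested occurrences of $d^s_m$ and of the inner structures $\pkld{\gamma}{y}$ are uniformly definable over $\langle V_{\kappa}, \in\rangle$ with the parameter $x$. The key facts I will lean on are that $\pkld{\mu}{x} \in V_{\kappa}$ whenever $\mu < \kappa$ (so membership and subset relations among its elements are absolute and first-order), and the identity $d_n^s(\mu, x, T) = d_n^s(\kappa, \kappa, T) \cap \pkld{\mu}{x}$ noted after Definition \ref{nsst}, which lets me replace any ``inner'' derived-set computation by the ambient one intersected with a $V_{\kappa}$-set. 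Everything else is a routine tracking of alternating quantifiers, so I would present the base case and one representative inductive step in full and indicate that the remaining cases are identical.
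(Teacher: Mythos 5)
Your overall strategy (simultaneous induction on $n$, writing out the defining formula and counting quantifiers) is the same as the paper's, but there is a genuine gap in how you handle part (2), and it propagates into your complexity count for part (1). The proposition asserts that $\varphi'_n(T,x)$ is $\Pi^1_0$ for \emph{every} $n$, whereas you only plan to show it is $\Pi^1_n$. The point you are missing is that, since $\pkld{|x\cap\kappa|}{x}$ is an \emph{element} of $V_\kappa$, every subset of it is also an element of $V_\kappa$; hence the quantifiers ``for all $y_1,y_2\subseteq \pkld{|x\cap\kappa|}{x}$'' occurring in $\varphi'_n$ are genuine \emph{first-order} quantifiers over $\langle V_\kappa,\in\rangle$, not relativised type-1 quantifiers. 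You note that the relativisation is ``first-order expressible'' but stop short of the conclusion: it does not merely fail to raise the complexity, it collapses it entirely, so $\varphi'_n(T,x)$ is first-order, i.e.\ $\Pi^1_0$, for all $n$. Without this observation, part (2) as stated is not proved.

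Moreover, your count for part (1) is then off by one level. With $\varphi'_m$ only known to be $\Pi^1_m$, your matrix is $\Sigma^1_{n+1}$, and prefixing the universal type-1 quantifiers $\forall T_1\forall T_2$ to a $\Sigma^1_{n+1}$ matrix yields $\Pi^1_{n+2}$, not $\Pi^1_{n+1}$: a $\Sigma^1_{n+1}$ formula begins with a second-order existential, so the universal prefix adds an alternation. The paper's count closes precisely because $\varphi'_m$ is $\Pi^1_0$: the consequent $\exists z\,(z\in S \wedge \mathrm{Reg}(|z\cap\kappa|)\wedge \varphi'_m(y_1,z)\wedge\varphi'_m(y_2,z))$ is then first-order, the matrix of each conjunct is $\Sigma^1_m$, and its universal second-order closure is $\Pi^1_{m+1}\subseteq\Pi^1_n$. (One could rescue your version of (1) by observing that $\Sigma^1_m\vee\Pi^1_m\subseteq\Pi^1_{m+1}$ and that $\Pi^1_{m+1}$ is closed under universal type-1 quantification, but that still leaves part (2) unproved at the level the proposition claims.)
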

  
\begin{proof} 
First, notice that $\pk{\kappa} \in V_{\kappa+1} \setminus V_{\kappa}$ and $\pkld{|x \cap \kappa|}{x} \in V_{\kappa}$ for all $x \in \pk{\kappa}$. Then $\mathcal{P}(\pk{\kappa}) \subseteq V_{\kappa+1}$, this is, subsets of $\pk{\kappa}$ are elements of $V_{\kappa+1}$. Also, notice that, $y \in \pk{\kappa}$ if and only if  $\langle V_{\kappa}, \in \rangle \models \exists \alpha ( OR(\alpha) \wedge  y \subseteq \alpha )$. Therefore $``y \in \pk{\kappa}"$   is $\Pi_2^0$, and so $\Pi_0^1$ over  $\langle V_{\kappa}, \in \rangle$. Similarly, $Reg(\alpha):`` \alpha$ is a regular limit cardinal$"$ is $\Pi_0^1$ over  $\langle V_{\kappa}, \in \rangle$. We will now prove both parts of the lemma simultaneously by induction on $n$. \vspace{2mm}
\quad\\
($n=0$) : (1)  $S \subseteq \pk{\kappa}$ is $0$-s-stationary  in $\pk{\kappa}$ if and only if $\varphi_0 (S) $, where, 
\begin{alignat*}{2}
\varphi_0 (S) \ :  \ \forall  y  (  y  \in \pk{\kappa} \rightarrow \exists z \in S \ (y \subseteq z) ).
\end{alignat*}
 
Notice that $y$ ranges over all possible elements of  $\pk{\kappa} \subseteq V_{\kappa}$. Then $y$ is a first-order variable over $\langle V_{\kappa}, \in , S \rangle$, and consequently  $\varphi_0 (S) $  is  $\Pi_0^1$ over $\langle V_{\kappa}, \in , S \rangle$.\\

(2) Let $x \in \pk{\kappa}$ be such that $|x \cap \kappa|$ is a regular limit cardinal. Now,  $T \cap  \pkld{|x \cap \kappa|}{x}$ is  $0$-stationary in $\pkld{|x \cap \kappa|}{x}$ if and only if $\varphi_0'(T , x )$, where 
\begin{alignat*}{2}
\varphi_0'(T,  x ) \ : \ \forall  y  (  y \in  \pkld{|x \cap \kappa|}{x} \rightarrow \exists z \in T  \cap  \pkld{|x \cap \kappa|}{x} (y \subseteq z)).
\end{alignat*}

Notice that $y$  ranges over all possible subsets of  $ \pkld {|x \cap \kappa|}{x} \in V_{\kappa}$. Then $y$ is a first-order variable over $\langle V_{\kappa}, \in \rangle$, and consequently  $\varphi_0'(T,x )$   is a $\Pi_0^1$ statement over $\langle V_{\kappa}, \in , S \rangle$, in the parameters $T$ and $x$.

\quad\\
$(n>m)$: (1) Suppose now that for all $m< n$, $ \varphi_{m} (S):``S \subseteq \pk{\kappa}$ is $m$-s-stationary in $\pk{\kappa}"$ is  $\Pi_m^1$ over $\langle V_{\kappa}, \in , S \rangle$.  And let us prove the result  for  $n$. $S \subseteq \pk{\kappa}$ is $n$-s-stationary in $\pk{\kappa}$ if and only if $\varphi_{n} (S)$, where 
\begin{alignat*}{2} 
\varphi_{n}(S)& \ : \ & \forall y_1 \forall y_1 \bigwedge_{m<n} \big[ ( y_1,y_2 \subseteq \pk{\kappa}\wedge \varphi_{m}(y_1) \wedge \varphi_{m}(y_2))  \rightarrow  \\ & & \exists z ( z \in S \wedge Reg(|z \cap \kappa|) \wedge \varphi_m'(y_1,z) \wedge \varphi_m'(y_2,z) )\big]. 
\end{alignat*}
Notice that $y_1,y_2$ range over all possible subsets of $\pk{\kappa}$, this is, over elements of $\mathcal{P}(\pk{\kappa})\subseteq V_{\kappa+1}$. Then $y_1,y_2$ are second-order variables over $\langle V_{\kappa}, \in , S \rangle$. By induction hypothesis of (1), we have that $ \varphi_{m}(y_1)$ and $\varphi_{m}(y_2)$ are $\Pi_m^1$ over $\langle V_{\kappa}, \in , S \rangle$. And by induction hypothesis of (2), $ \varphi_m'(y_1,z)$ and $\varphi_m'(y_2,z)$ are $\Pi_0^1$ over $\langle V_{\kappa}, \in \rangle$. Hence, $\varphi_n(S)$ is  $\Pi_{(n-1)+1}^1=\Pi_{n}^1$ over $\langle V_{\kappa}, \in , S \rangle$.

 \quad\\
 (2) Let $x \in \pk{\kappa}$ be such that $|x \cap \kappa|$ is a regular limit cardinal. And suppose that for all $m< n$, $\varphi'_{m} (T,x ):``T \cap  \pkld{|x \cap \kappa|}{x}$ is $m$-s-stationary in $\pkld{|x \cap \kappa|}{x}"$  is a $\Pi_0^1$ statement over $\langle V_{\kappa}, \in\rangle$ in the parameters $T$ and $x$. And let us prove the result for $n$. $T \cap  \pkld{|x \cap \kappa|}{x}$ is $n$-s-stationary in $\pkld{|x \cap \kappa|}{x}$ if and only if $\varphi_n'(T , x )$, where  
\begin{alignat*}{2} 
\varphi_n' (T,x)& \ : \ & \forall y_1,\forall y_2 \bigwedge_{m<n}  \big[ ( y_1,y_2 \subseteq  \pkld{|x \cap \kappa|}{x} \wedge \varphi_{m}'(y_1,  x )\wedge \varphi_{m}'(y_2,  x ) )  \rightarrow \\ & & \exists z  ( z \in T \wedge Reg(|z \cap \kappa|) \wedge \varphi_m'(y_1,z) \wedge \varphi_m'(y_2,z)) \big]. 
\end{alignat*}
Notice that $y_1,y_2$ range over all possible subsets of  $ \pkld {|x \cap \kappa|}{x} \in V_{\kappa}$. Then $y_1,y_2$ are  first-order variables over $\langle V_{\kappa}, \in \rangle$. Now, by induction hypothesis of (2), we have that $\varphi_m'(y_1,x), \varphi_m'(y_2,x)),\varphi_m'(y_1,z)$ and $ \varphi_m'(y_2,z)$  are $\Pi_m^1$ statements over $\langle V_{\kappa}, \in , S \rangle$, in their respective parameters. Then $ \varphi_n' (T,x)$ is also a $\Pi_0^1$ statement over $\langle V_{\kappa}, \in \rangle$, in the  parameters $T$ and $x$. 
\end{proof}\vspace{1mm} 

\begin{corollary}\label{stp1stf}  
Let $\kappa$ be a regular limit cardinal. Then 
\begin{itemize}
\item[(1)] The formula $\psi_n(S)$ : $``S \subseteq \pk{\kappa}$ is $n$-stationary in $\pk{\kappa}"$ is  $\Pi^1_n $   over $\langle V_{\kappa}, \in , S \rangle$. \item[(2)] Let $x \in  \pk{\kappa}$ be such that $|x \cap \kappa|$ is a regular limit cardinal. Then  $\psi'_n(T,x) : ``T  \cap \pkld{|x \cap \kappa|}{x}$ is $n$-stationary in $\pkld{|x \cap \kappa|}{x}"$ is a $\Pi^1_0$ statement over $\langle V_{\kappa}, \in  \rangle$, in the parameters $T,x$.
\end{itemize}
\end{corollary}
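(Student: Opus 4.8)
The plan is to mimic exactly the proof of Proposition \ref{p320bg}, replacing $n$-s-stationarity by $n$-stationarity throughout and, correspondingly, replacing the two universally quantified witness parameters $y_1,y_2$ by a single universally quantified parameter $y$. I would run a simultaneous induction on $n$ proving (1) and (2) together, since the complexity bound for $\psi_n$ feeds on the complexity bound for $\psi'_m$ with $m<n$, and vice versa.

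First I would record the set-theoretic bookkeeping that is identical to the s-stationary case: $\pk{\kappa}\in V_{\kappa+1}\setminus V_{\kappa}$, hence $\mathcal{P}(\pk{\kappa})\subseteq V_{\kappa+1}$, while $\pkld{|x\cap\kappa|}{x}\in V_{\kappa}$ for every $x\in\pk{\kappa}$; the predicates ``$y\in\pk{\kappa}$'' and $Reg(\alpha):$``$\alpha$ is a regular limit cardinal'' are $\Pi^0_0$ (equivalently $\Pi^1_0$) over $\langle V_{\kappa},\in\rangle$. For the base case $n=0$, $0$-stationarity is $0$-s-stationarity, so $\psi_0=\varphi_0$ and $\psi'_0=\varphi'_0$ verbatim; in particular $\psi_0(S)$ is $\Pi^1_0$ over $\langle V_{\kappa},\in,S\rangle$ and $\psi'_0(T,x)$ is $\Pi^1_0$ over $\langle V_{\kappa},\in\rangle$ in the parameters $T,x$, because the single quantified variable $y$ ranges over elements of (a set in) $V_{\kappa}$ and is therefore first-order.

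For the inductive step, assume (1) and (2) for all $m<n$. Using Definition \ref{nst}, $S\subseteq\pk{\kappa}$ is $n$-stationary iff
\begin{align*}
\psi_n(S)\ :\ \forall y\ \bigwedge_{m<n}\big[\,(y\subseteq\pk{\kappa}\wedge\psi_m(y))\ \rightarrow\ \exists z\,(z\in S\wedge Reg(|z\cap\kappa|)\wedge\psi'_m(y,z))\,\big],
\end{align*}
where $y$ ranges over $\mathcal{P}(\pk{\kappa})\subseteq V_{\kappa+1}$ and is thus a second-order variable over $\langle V_{\kappa},\in,S\rangle$; by the induction hypothesis on (1) each $\psi_m(y)$ is $\Pi^1_m$ and by the induction hypothesis on (2) each $\psi'_m(y,z)$ is $\Pi^1_0$, so $\psi_n(S)$ is $\Pi^1_{(n-1)+1}=\Pi^1_n$ over $\langle V_{\kappa},\in,S\rangle$. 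Similarly, for $x\in\pk{\kappa}$ with $|x\cap\kappa|$ a regular limit cardinal, $T\cap\pkld{|x\cap\kappa|}{x}$ is $n$-stationary in $\pkld{|x\cap\kappa|}{x}$ iff
\begin{align*}
\psi'_n(T,x)\ :\ \forall y\ \bigwedge_{m<n}\big[\,(y\subseteq\pkld{|x\cap\kappa|}{x}\wedge\psi'_m(y,x))\ \rightarrow\ \exists z\,(z\in T\wedge Reg(|z\cap\kappa|)\wedge\psi'_m(y,z))\,\big],
\end{align*}
and here $y$ ranges over subsets of $\pkld{|x\cap\kappa|}{x}\in V_{\kappa}$, hence is first-order over $\langle V_{\kappa},\in\rangle$; since every subformula $\psi'_m(y,x),\psi'_m(y,z)$ is $\Pi^1_0$ by the induction hypothesis, $\psi'_n(T,x)$ is again $\Pi^1_0$ over $\langle V_{\kappa},\in\rangle$ in the parameters $T,x$. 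This completes the induction.

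There is essentially no new obstacle: the whole content was already extracted in Proposition \ref{p320bg}, and the passage to the non-simultaneous version only simplifies matters (one witness set instead of two). The one point that merits a word of care is making sure the quantifier over the reflecting point $z$ in the $\exists z(\dots)$ clause stays first-order in both $\psi_n$ and $\psi'_n$ — it does, because $z$ is required to lie in $\pk{\kappa}$ (resp. in $\pkld{|x\cap\kappa|}{x}$), i.e. in a set belonging to $V_{\kappa}$ — and that $Reg(|z\cap\kappa|)$ contributes nothing beyond $\Pi^1_0$. Hence the argument of Proposition \ref{p320bg} applies mutatis mutandis, which is exactly why this is stated as a corollary.
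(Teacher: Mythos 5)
Your proposal is correct and is essentially identical to the paper's own proof, which simply observes that $\psi_0=\varphi_0$, $\psi'_0=\varphi'_0$, and that for $n\geq 1$ the formulas differ from those of Proposition \ref{p320bg} only in quantifying over a single subset of $\pk{\kappa}$ rather than a pair. You have merely written out in full the induction that the paper leaves implicit, with the same complexity bookkeeping.
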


\begin{proof} 
Analogous to the proof of Proposition \ref{p320bg}. Notice that,  $\psi_0(S)=\varphi_0(S)$ and $\psi'_0(T,x)=\varphi'_0(T,x)$. For $n \geq 1$, the formulas differ syntactically in only one aspect;  $\psi(S)$ and $\psi'_n(T)$ quantifies over subsets of $\pk{A}$, while $\varphi(S)$ and $\varphi'_n(T,x)$ quantifies over pairs of subsets of $\pk{A}$. 
\end{proof}\vspace{1mm} 

Recall that $x \in \pk{\kappa}$ if and only if  $\langle V_{\kappa}, \in \rangle \models \exists \alpha ( OR(\alpha) \wedge  x \subseteq \alpha )$. Then, in $\langle V_{\kappa}, \in \rangle$, the formulas expressing $x \in \pk{\kappa}$ and $x \subseteq \pk{\kappa}$ does not depend on $\kappa$. Thus, in Proposition \ref{p320bg} and Corollary \ref{stp1stf}, the formulas $\varphi_n(S)$ and $\psi_n(S)$ do not have $\kappa$ as a parameter. Hence, for every $\kappa$ regular limit cardinal, we can use uniformly that  $\varphi_n(S)$ and $\psi_n(S)$ are  $\Pi^1_n $   over $\langle V_{\kappa}, \in , S \rangle$.\vspace{2mm}

 \begin{theorem}\label{inds} 
 Let $n < \omega$. If $\kappa$ is  $\Pi_n^1$ indescribable, then $\pk{\kappa}$ is both, $(n+1)$-s-stationary and $(n+1)$-stationary in  $\pk{\kappa}$.
 \end{theorem}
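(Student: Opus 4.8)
The plan is to prove both statements (the $n$-s-stationary and the $n$-stationary versions) simultaneously by induction on $n$, using the characterizations of higher stationarity by $\Pi^1_n$ formulas from Proposition~\ref{p320bg} and Corollary~\ref{stp1stf}, exactly as Bagaria--Magidor--Sakai do in the ordinal case. The base case $n=0$: if $\kappa$ is $\Pi^1_0$ indescribable then $\kappa$ is in particular weakly Mahlo (indeed inaccessible), so by Theorem~\ref{col1} and Proposition~\ref{1sstiff1st} we get that $\pk{\kappa}$ is $1$-stationary and $1$-s-stationary in itself.

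For the inductive step, assume $\kappa$ is $\Pi^1_{n}$ indescribable (hence $\Pi^1_m$ indescribable for all $m<n$, and in particular weakly Mahlo). I want to show $\pk{\kappa}$ is $(n+1)$-s-stationary. Fix $m\le n$ and $T_1,T_2\subseteq\pk{\kappa}$ that are $m$-s-stationary in $\pk{\kappa}$; I must produce $x\in\pk{\kappa}$ with $\mu:=|x\cap\kappa|$ a regular limit cardinal such that $T_1\cap\pkld{\mu}{x}$ and $T_2\cap\pkld{\mu}{x}$ are $m$-s-stationary in $\pkld{\mu}{x}$. By Proposition~\ref{p320bg}(1), the statements ``$T_i$ is $m$-s-stationary in $\pk{\kappa}$'' are $\Pi^1_m$ (hence $\Pi^1_n$) over $\langle V_\kappa,\in,T_1,T_2\rangle$. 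So the conjunction is a $\Pi^1_n$ statement true in $\langle V_\kappa,\in,T_1,T_2\rangle$. Here I need to be a bit careful: to apply indescribability I want a $\Pi^1_n$ sentence that reflects to some $V_\alpha$; the natural move is to observe that $\langle V_\kappa,\in,T_1,T_2\rangle$ also satisfies the (trivially true) sentence asserting ``$T_1,T_2$ are $m$-s-stationary in $\pk{\kappa}$'' and apply $\Pi^1_n$ indescribability to get an $\alpha<\kappa$ with $\langle V_\alpha,\in,T_1\cap V_\alpha,T_2\cap V_\alpha\rangle$ satisfying the same $\Pi^1_n$ sentence. Restricting attention to the club $C_\kappa$, one arranges $\alpha$ to be a (regular limit) cardinal $\mu<\kappa$, so that $V_\mu\cap\pk{\kappa}=\pk{\mu}=\pkld{\mu}{\mu}$, and then the reflected sentence says precisely that $T_i\cap\pk{\mu}$ is $m$-s-stationary in $\pk{\mu}$. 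Taking $x:=\mu$ (note $|\mu\cap\kappa|=\mu$ is a regular limit cardinal and $\pkld{\mu}{\mu}=\pk{\mu}$), this $x$ witnesses the required instance. Since $T_1,T_2$ and $m\le n$ were arbitrary, $\pk{\kappa}$ is $(n+1)$-s-stationary. The $(n+1)$-stationary version is identical, using Corollary~\ref{stp1stf}(1) in place of Proposition~\ref{p320bg}(1) and quantifying over a single set $T$.

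The main obstacle, and the step deserving the most care, is the passage from ``$\Pi^1_n$ over $\langle V_\kappa,\in,S\rangle$'' to an actual application of $\Pi^1_n$ indescribability and, crucially, the reflection of $m$-s-stationarity \emph{downward}: I need that when the $\Pi^1_m$ formula $\varphi_m(T_i)$ (about $\pk{\kappa}$) reflects to $V_\mu$, the reflected formula genuinely expresses ``$T_i\cap\pk{\mu}$ is $m$-s-stationary in $\pk{\mu}$'', not some $V_\mu$-internal approximation. This is exactly why Proposition~\ref{p320bg} was set up with the companion $\Pi^1_0$ formulas $\varphi'_m(T,x)$ expressing relativized $m$-s-stationarity: one uses the absoluteness of $\varphi'_m$ and the fact, noted before the theorem, that $\varphi_n(S)$ carries no $\kappa$ as a parameter, so the same formula works uniformly over all $V_\alpha$ with $\alpha$ a regular limit cardinal. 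I will also need the elementary observation that the set of $\alpha<\kappa$ with $V_\alpha\prec_{\Sigma_1}$-type closure and $\alpha$ a cardinal (equivalently, working below $C_\kappa$) is club, so that the reflecting $\alpha$ may be taken to lie in $\{\mu<\kappa:\mu\text{ is a regular limit cardinal}\}$, which is where $x=\mu$ becomes a legitimate witness.
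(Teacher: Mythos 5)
Your proposal is correct and follows essentially the same route as the paper: express $m$-s-stationarity (resp.\ $m$-stationarity) as a $\Pi^1_m$, hence $\Pi^1_n$, statement over $\langle V_\kappa,\in,\cdot\rangle$ via Proposition~\ref{p320bg} (resp.\ Corollary~\ref{stp1stf}), reflect it by $\Pi^1_n$-indescribability to a regular limit cardinal $\mu<\kappa$, identify $V_\mu\cap\pk{\kappa}$ with $\pkld{\mu}{\mu}$, and take $x:=\mu$ as the witness; indeed you are slightly more careful than the paper in reflecting the conjunction for the pair $T_1,T_2$ simultaneously, which is what the definition of $(n+1)$-s-stationarity actually requires. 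Two peripheral justifications are off, though neither affects the core argument: $\Pi^1_0$-indescribability (i.e.\ inaccessibility) does not by itself imply weak Mahloness, and the regular limit cardinals below $\kappa$ do not form a club, so arranging the reflection point to be a regular limit cardinal must come from the indescribability hypothesis itself (e.g.\ by conjoining the appropriate clause to the reflected formula) rather than from a club argument --- the paper asserts this same step without proof, and your base case is in any event subsumed by the general reflection argument applied with $m=0$.
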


\begin{proof}  
Suppose $\kappa$ is   $\Pi_n^1$ indescribable. Let $S$ be $m$-s-stationary in $\pk{\kappa}$ for some  $m < n+1$. By Proposition \ref{p320bg},  the sentence $ \varphi_m(S):``S \subseteq \pk{\kappa}$ is $m$-s-stationary in $\pk{\kappa}"$ is  $\Pi_m^1$  over $\langle V_{\kappa}, \in , S \rangle $. We have that,
 $$\langle V_{\kappa}, \in , S \rangle \models \varphi_m(S).$$
 
Then by $\Pi_n^1$ indescribability of $\kappa$, there is a regular limit cardinal   $\mu < \kappa$ with,
 \begin{align*}
 \langle V_{\mu}, \in , S\cap V_{\mu} \rangle \models \varphi_m(S\cap V_{\mu}).
 \end{align*} 

Notice that $\pkld{\mu}{\mu} = \pk{\kappa} \cap V_{\mu}$. Direct inclusion is immediate. For the converse, take $x \in \pk{\kappa} \cap V_{\mu}$. Then $x \subseteq \kappa \cap V_{\mu}=\mu$ and $|x| < \mu$, otherwise, $\text{rank}(x) = \mu $ and so $x \notin V_{\mu}$. Then $ x \in \pkld{\mu}{\mu}$, and so $\pk{\kappa} \cap V_{\mu} \subseteq \pkld{\mu}{\mu} $.  \vspace{2mm}

Now,  $S \cap V_{\mu}=(S \cap \pk{\kappa}) \cap V_{\mu} = S \cap  \pkld{\mu}{\mu}$. Then we have  $\langle V_{\mu}, \in , S\cap \pkld{\mu}{\mu}\rangle \models \varphi_m(S \cap \pkld{\mu}{\mu})$. Thus, $\mu \in \pk{\kappa}$ is such that $\mu \cap \kappa = \mu$ is a regular limit cardinal, and $S\cap \pkld{\mu}{\mu}$ is $m$-s-stationary in $ \pkld{\mu}{\mu}$. Therefore, $ \pk{\kappa}$ is $(n+1)$-s-stationary in  $\pk{\kappa}$. To prove that $ \pk{\kappa}$ is $(n+1)$-stationary in  $\pk{\kappa}$, we proceed analogously, using the formula $\psi_n(S)$, which,  by Corollary \ref{stp1stf},  is  $\Pi_m^1$  over $\langle V_{\kappa}, \in , S \rangle $. 
\end{proof}\vspace{1mm} 

 \begin{corollary}\label{totind}  
 Let $\kappa$ be totally indescribable. Then $\pk{\kappa}$ is both $n$-s-stationary and $n$-stationary in  $\pk{\kappa}$, for every $n <\omega$. \ $\square$
 \end{corollary}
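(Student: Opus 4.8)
The plan is to read Corollary \ref{totind} off Theorem \ref{inds}, using only the fact that ``totally indescribable'' means ``$\Pi_m^1$ indescribable for every $m<\omega$''. So essentially all the work has already been done in Theorem \ref{inds}, and what remains is a short bookkeeping argument plus the handling of the degenerate case $n=0$, which Theorem \ref{inds} does not itself produce (it only manufactures $(m+1)$-stationarity out of $\Pi_m^1$ indescribability).

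First I would dispose of the case $n=0$ directly from the definitions: by Definitions \ref{nsst}(1) and \ref{nst}(1), $\pk{\kappa}$ is $0$-s-stationary (respectively $0$-stationary) in $\pk{\kappa}$ precisely when $\pk{\kappa}$ is cofinal in $\pk{\kappa}$, and this holds trivially, since for every $x\in\pk{\kappa}$ we have $x\subseteq x$ with $x\in\pk{\kappa}$.

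Next, for $n\ge 1$ I would write $n=m+1$ with $m<\omega$. Since $\kappa$ is totally indescribable, it is in particular $\Pi_m^1$ indescribable, so Theorem \ref{inds}, applied with this $m$, gives that $\pk{\kappa}$ is both $(m+1)$-s-stationary and $(m+1)$-stationary in $\pk{\kappa}$; that is, $\pk{\kappa}$ is $n$-s-stationary and $n$-stationary in $\pk{\kappa}$. As $n<\omega$ was arbitrary, the corollary follows.

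I do not expect any real obstacle here: the entire content is carried by Theorem \ref{inds}, and the only subtleties are cosmetic — recording the trivial base case $n=0$ separately, and making the reindexing $n=m+1$ explicit so that the hypothesis ``$\Pi_m^1$ indescribable'' is invoked for the correct $m$. If one wished, one could also remark that the same argument, restricted to whichever $\Pi_m^1$ indescribability levels $\kappa$ happens to enjoy, yields the finer statement that $\kappa$ being $\Pi_m^1$ indescribable for all $m<N$ already makes $\pk{\kappa}$ $n$-(s-)stationary for all $n\le N$.
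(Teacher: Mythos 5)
Your proposal is correct and is exactly the argument the paper intends: the corollary is stated with a $\square$ as an immediate consequence of Theorem \ref{inds}, obtained by noting that total indescribability gives $\Pi^1_m$ indescribability for every $m<\omega$ and reindexing $n=m+1$. Your explicit handling of the trivial case $n=0$ is a harmless extra precaution that the paper leaves implicit.
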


 \begin{theorem}\label{lsc} 
 If $\kappa$ is $\lambda$-supercompact  and $\lambda ^{< \kappa } = \lambda$, then $\pkl$ is $n$-s-stationary, for every $n<\omega$.
 \end{theorem}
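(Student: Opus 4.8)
The strategy is to mimic the proof of Theorem \ref{inds}, replacing $\Pi_n^1$ indescribability (reflection to a $V_\mu$) with the reflection that comes from a normal fine ultrafilter on $\pkl$, and to carry out an induction on $n$. Fix $j : V \to M$ an elementary embedding witnessing $\lambda$-supercompactness, with critical point $\kappa$, $\lambda M \subseteq M$, and $j[\lambda] \in M$. Set $x_0 := j[\lambda] \in \pk[j(\kappa)]{j(\lambda)}^M$. The key point is that $x_0 \cap j(\kappa) = j[\kappa] = \kappa$, so $|x_0 \cap j(\kappa)|^M = \kappa$ is a regular limit cardinal in $M$ (using $\lambda^{<\kappa} = \lambda$ to guarantee $\pkl$ has size $\lambda$ and the embedding behaves well), and moreover $\pkld{\kappa}{x_0}$ as computed in $M$ is exactly $j[\pkl] \cong \pkl$ via $j$. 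Thus $x_0$ will be our witness for higher stationarity of $\pkl$ in the statement $j(\text{``}\pkl\text{ is }n\text{-s-stationary''})$, once we know how $n$-s-stationarity transfers along $j$.

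The induction on $n$ runs as follows. The case $n=0$ is just that $\pkl$ is cofinal in itself. For the inductive step, suppose $\pkl$ is $n$-s-stationary and let $T_1, T_2 \subseteq \pkl$ be $m$-s-stationary for some $m < n+1$; we must produce $x \in \pkl$ with $\mu := |x \cap \kappa|$ a regular limit cardinal and $T_1 \cap \pkld{\mu}{x}$, $T_2 \cap \pkld{\mu}{x}$ both $m$-s-stationary in $\pkld{\mu}{x}$. Apply $j$: by elementarity $j(T_i)$ is $j(m)$-s-stationary in $\pk[j(\kappa)]{j(\lambda)}$ in $M$, i.e. (since $m < \omega$, $j(m) = m$) $j(T_i)$ is $m$-s-stationary there. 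Now the crucial computation: $j(T_i) \cap \pkld{\kappa}{x_0} = j(T_i) \cap j[\pkl]$, and under the isomorphism induced by $j$ (here is where Lemma \ref{funt} enters — it shows the induced isomorphism $\pkl \to j[\pkl]$ preserves $m$-s-stationarity, provided $j$ fixes cardinals below $\kappa$, which it does since $\mathrm{crit}(j) = \kappa$) this set corresponds to $\{ z \in \pkl : j(z) \in j(T_i) \}$. By elementarity $j(z) \in j(T_i)$ iff $z \in T_i$, so $j(T_i) \cap j[\pkl]$ corresponds to $T_i$ itself, which is $m$-s-stationary in $\pkl$ by hypothesis; transporting back, $j(T_i) \cap \pkld{\kappa}{x_0}$ is $m$-s-stationary in $\pkld{\kappa}{x_0}$ in $M$. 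Hence in $M$ the point $x_0$ witnesses the defining clause for $\pk[j(\kappa)]{j(\lambda)}$ being $(n+1)$-s-stationary with respect to $j(T_1), j(T_2)$. Since this holds in $M$, elementarity (pulling the statement ``there exists such a witness'' back) gives that $\pkl$ is $(n+1)$-s-stationary in $V$; more carefully, one argues that the existence of $x_0 \in \pk[j(\kappa)]{j(\lambda)}$ in $M$ satisfying the clause relative to $j(T_1), j(T_2)$ reflects, by elementarity of $j$, to the existence in $V$ of a witness for $T_1, T_2$.

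The main obstacle, and the step deserving the most care, is the bookkeeping around the isomorphism between $\pkld{\kappa}{x_0}$ (computed in $M$) and $\pkl$, and checking that $n$-s-stationarity really does transfer both ways across it. This is exactly what Lemma \ref{funt} is designed for, but one must verify its hypotheses apply: $j$ restricted to $\lambda$ is a bijection onto $j[\lambda]$ fixing all cardinals $< \kappa$, and $\pkld{\kappa}{x_0}^M$ equals (the $j$-image of) $\pkl$ — the latter uses $\lambda M \subseteq M$ to see that $M$ computes $\pkld{\kappa}{j[\lambda]}$ correctly and uses $\lambda^{<\kappa} = \lambda$ so that everything has the right size. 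A secondary point is confirming $|x_0 \cap j(\kappa)|^M = \kappa$ is a \emph{regular limit} cardinal in $M$, which follows since $\kappa$ is regular limit in $V$ and $M$ is closed under $\lambda$-sequences hence agrees. Once these are in place, the induction is routine, and the non-simultaneous version (Corollary \ref{lsc2}) follows by the same argument using only one set $T$, as well as the assertion for $n$-stationarity via Lemma \ref{funt2}.
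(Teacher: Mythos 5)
Your proposal is correct and follows essentially the same route as the paper's proof: the witness $j``\lambda$ with $j``\lambda\cap j(\kappa)=\kappa$, the transfer of $m$-s-stationarity from $T_i$ to $j(T_i)\cap\pkld{\kappa}{j``\lambda}$ across the isomorphism induced by $j{\restriction}\lambda$ via Lemma \ref{funt}, absoluteness of that statement to $M$ (using $^{\lambda}M\subseteq M$ and $\lambda^{<\kappa}=\lambda$ to get $\mathcal{P}(\pkld{\kappa}{j``\lambda})\subseteq M$), and an elementarity pullback. The only difference is cosmetic: your induction on $n$ is vacuous since the inductive hypothesis is never invoked, and the paper runs the same argument directly for a fixed $n$.
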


\begin{proof} 
First, notice that, since  $\kappa$ is $\lambda$-supercompact, there is an elementary embedding $j: V \preceq M$ such that $\text{crit}(j)= \kappa$, $\lambda < j(\kappa)$ and $^{\lambda}M \subseteq M$, where $M$ is transitive. Notice that $j``\alpha =\{j(\beta) : \beta \in \alpha \}\in M$ for all $\alpha \leq \lambda$, because $j(\beta)\in M$ for all $\beta <\alpha \leq \lambda$ and $M$ is closed under $\lambda$ sequences. Then, since $\kappa \leq \lambda$, we have,  $\pkld{\kappa}{j``\lambda} \subseteq M$. Moreover, as $|j``\lambda|=|\lambda|$, it follows that  $|j``\lambda| ^{< \kappa} =\lambda ^{< \kappa } = \lambda $, and so $|\pkld{\kappa}{j``\lambda} |=\lambda$. Therefore, $\pkld{\kappa}{j``\lambda} \in  M$ and $\mathcal{P}(\pkld{\kappa}{j``\lambda} ) \subseteq M $.\vspace{2mm}

Also, observe that $j {\restriction} {\kappa}= id_{\kappa}$. Then $g : \lambda \rightarrow j``\lambda$, given by $g(\alpha):=j(\alpha)$, is a bijection such that $g(\alpha)=\alpha$ for all $\alpha \in c(\kappa)$. Then by Lemma \ref{funt}, the induced isomorphism $f:=g^*:\langle \pkl , \subseteq \rangle \rightarrow \langle  \pk{j''\lambda} , \subseteq \rangle$  given by $f(x):=g[x]=j''x$, preserves $n$-s-stationary sets for all $n <\omega$. Moreover, if $x \in \pkl$ then $|x|<\kappa=crit(j)$, and so $j''x=j(x)$. Thus, for all $x \in \pkl$,  $f(x)=j''x=j(x)$.\vspace{2mm}

Now, let us proceed to prove the Theorem. Fix $n< \omega$. Take $m<n $ and  $S_1,S_2 \subseteq \pkl$  $m$-s-stationary sets in $\pkl$. As $f$ preserves $n$-s-stationary sets, $f[S_1],f[S_2]\subseteq \pk{j``\lambda}$ are $m$-s-stationary in $\pk{j``\lambda}$. Notice that, for $i=1,2$, $f[S_i]=\{f(x) : x \in S_i\}=\{j(x) : x \in S_i\}=j``S_i\subseteq j(S_i)$. Then, we have that $j(S_1)\cap \pk{j``\lambda}$ and $j(S_1)\cap \pk{j``\lambda}$ are also $m$-s-stationary in $\pk{j``\lambda}$. This is,\vspace{1mm}
 \begin{eqnarray*}
 V & \models &`` \ j(S_1)\cap \pk{j``\lambda} \text{ and } j(S_2)\cap \pk{j``\lambda} \\ & &  \text{ are $m$-s-stationary in } \pk{j``\lambda} \ " 
 \end{eqnarray*}
 
Notice that the formula $``X$ is $m$-s-stationary in  $\pkld{\kappa}{j``\lambda}"$, depends only on $X$, $\kappa$ and the subsets of $\pkld{\kappa}{j``\lambda}$. Now, $M$ is transitive, $j(S_1) \cap  \pk{j``\lambda} \in M$, $j(S_2) \cap  \pk{j``\lambda}\in  M$ and $\mathcal{P}(\pkld{\kappa}{j``\lambda} ) \subseteq M $, then, \vspace{1mm}
 \begin{eqnarray*}
 M & \models &`` \ j(S_1)\cap \pk{j``\lambda} \text{ and } j(S_2)\cap \pk{j``\lambda} \\ & &  \text{ are $m$-s-stationary in } \pk{j``\lambda} \ " \end{eqnarray*}

In $M$, $\kappa$ is a regular limit cardinal and $\kappa < j(\kappa)$.  Define $x:=j``\lambda$. Then $\kappa=j``\kappa \subseteq j``\lambda =x$, and so $\kappa \subseteq x \cap j(\kappa)$. In fact,  $\kappa = x \cap j(\kappa)$. For if $\alpha \in (x \cap j(\kappa))\setminus \kappa$, then $\alpha =j(\beta)$ for some $\kappa < \beta < \lambda$ and $\alpha < j(\kappa)$, but $\kappa < \beta $ implies $j(\kappa) < j(\beta)= \alpha$,  this is a contradiction. Now, $x=j``\lambda \subseteq j(\lambda)$ and $|x|=|j``\lambda|= \lambda <j(\kappa)$. Then $x \in \pkld{j(\kappa)}{j(\lambda)}$, and so \vspace{1mm}
 \begin{eqnarray*}
 M & \models & \exists x \big(Reg(|x \cap j(\kappa)|)  \wedge x \in \pkld{j(\kappa)}{j(\lambda)}  \wedge ``j(S_1) \cap  \pkld{|x \cap j(\kappa)| }{x} \\ & & \text{ and } j(S_2) \cap  \pkld{|x \cap j(\kappa)| }{x}  \text{ are } m\text{-s-stationary in } \pkld{|x \cap j(\kappa)| }{x}"\big).
\end{eqnarray*}
 As $j$ is an elementary embedding, we get that \vspace{1mm}
 \begin{eqnarray*}
 V & \models & \exists x \big(Reg(|x \cap j^{-1}(j(\kappa))|) \ \wedge  \ x \in \pkld{j^{-1}(j(\kappa))}{j^{-1}(j(\lambda))} \ \wedge  \\ & &
 ``j^{-1}(j(S_i)) \cap  \pkld{|x \cap j^{-1}(j(\kappa))|}{x}  \text{ and } j^{-1}(j(S_i)) \cap  \pkld{|x \cap j^{-1}(j(\kappa))|}{x}  \\ & & \text{ are } m\text{-s-stationary in } \pkld{|x \cap j^{-1}(j(\kappa))|}{x}"\big).
 \end{eqnarray*}
Since $ j^{-1}(j(\kappa)) = \kappa$, $ j^{-1}(j(\lambda))= \lambda$  and $ j^{-1}(j(S_i))=S_i$,  we   have \vspace{1mm}
\begin{eqnarray*}
V & \models & \exists x \big(Reg(|x \cap \kappa|)\ \wedge \ x \in \pkl \ \wedge``S_1 \cap  \pkld{|x \cap \kappa|}{x} \text{ and }  \\ & &  S_i \cap  \pkld{|x \cap \kappa|}{x} \text{ are } m\text{-s-stationary in } \pkld{|x \cap \kappa|}{x}" \big).
 \end{eqnarray*}
 
Therefore, for $i=1,2$, there exists $x \in \pkl$ such that  $\mu := |x \cap \kappa|$ is a regular limit cardinal and $S_1\cap \pkld{\mu}{x}$, $S_2\cap \pkld{\mu}{x}$ are $m$-s-stationary in $\pkld{\mu}{x}$. Therefore, $\pkl$ is $n$-s-stationary in $\pkl$. 
\end{proof} 

\begin{corollary}\label{lsc2} 
If $\kappa$ is $\lambda$-supercompact  and $\lambda ^{< \kappa } = \lambda$, then $\pkl$ is $n$-stationary, for every $n<\omega$.
\end{corollary}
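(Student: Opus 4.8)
The plan is to mimic the proof of Theorem \ref{lsc} line for line, the only changes being that a single $m$-stationary set $T$ replaces the pair $S_1,S_2$, and that Lemma \ref{funt2} is invoked in place of Lemma \ref{funt}. Assuming $\kappa$ is $\lambda$-supercompact with $\lambda^{<\kappa}=\lambda$, I would fix an elementary embedding $j:V\preceq M$ with $\mathrm{crit}(j)=\kappa$, $\lambda<j(\kappa)$, $M$ transitive and $^{\lambda}M\subseteq M$, and reproduce the auxiliary observations: $j``\lambda\in M$; from $|j``\lambda|^{<\kappa}=\lambda^{<\kappa}=\lambda$ one gets $|\pkld{\kappa}{j``\lambda}|=\lambda$, hence $\pkld{\kappa}{j``\lambda}\in M$ and $\mathcal{P}(\pkld{\kappa}{j``\lambda})\subseteq M$; and the bijection $g:\lambda\to j``\lambda$, $g(\alpha):=j(\alpha)$, fixes every $\alpha\in c(\kappa)$, so by Lemma \ref{funt2} the induced isomorphism $f:=g^*:\langle\pkl,\subseteq\rangle\to\langle\pk{j``\lambda},\subseteq\rangle$ preserves $n$-stationary sets for all $n<\omega$, with $f(x)=j``x=j(x)$ for every $x\in\pkl$.

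Then, for fixed $n<\omega$, $m<n$ and a set $T\subseteq\pkl$ that is $m$-stationary in $\pkl$, I would argue as follows. Since $f$ preserves $m$-stationarity, $f[T]=j``T$ is $m$-stationary in $\pk{j``\lambda}$, and hence so is its superset $j(T)\cap\pk{j``\lambda}$. The assertion ``$j(T)\cap\pk{j``\lambda}$ is $m$-stationary in $\pk{j``\lambda}$'' mentions only the parameter $j(T)\cap\pk{j``\lambda}$, the cardinal $\kappa$, and quantifiers ranging over subsets of $\pkld{\kappa}{j``\lambda}$ and their elements, all of which lie in the transitive class $M$, so it transfers from $V$ to $M$. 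In $M$ the set $x:=j``\lambda$ satisfies $x\in\pkld{j(\kappa)}{j(\lambda)}$ and $x\cap j(\kappa)=\kappa$ (the inclusion $\kappa\subseteq x\cap j(\kappa)$ is immediate, and any $\alpha\in(x\cap j(\kappa))\setminus\kappa$ would be $j(\beta)$ with $\kappa<\beta<\lambda$, giving $j(\kappa)<j(\beta)=\alpha<j(\kappa)$, absurd), with $|x\cap j(\kappa)|=\kappa$ a regular limit cardinal; hence
\begin{eqnarray*}
M &\models& \exists x\, \big(Reg(|x\cap j(\kappa)|) \wedge x\in\pkld{j(\kappa)}{j(\lambda)} \wedge ``j(T)\cap\pkld{|x\cap j(\kappa)|}{x} \\ & & \text{ is } m\text{-stationary in } \pkld{|x\cap j(\kappa)|}{x}"\big).
\end{eqnarray*}
Pulling this back through $j$ by elementarity and using $j^{-1}(j(\kappa))=\kappa$, $j^{-1}(j(\lambda))=\lambda$, $j^{-1}(j(T))=T$, I would obtain an $x\in\pkl$ for which $\mu:=|x\cap\kappa|$ is a regular limit cardinal and $T\cap\pkld{\mu}{x}$ is $m$-stationary in $\pkld{\mu}{x}$; since $m<n$ and $T$ were arbitrary, this is exactly the requirement of Definition \ref{nst}(2) with $S=\pkl$, so $\pkl$ is $n$-stationary.

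I do not anticipate a genuine obstacle, since all the content is already present in the proof of Theorem \ref{lsc}. The single delicate point is the $V$--$M$ absoluteness of ``$m$-stationary in $\pkld{\kappa}{j``\lambda}$'', which is precisely what the hypothesis $\lambda^{<\kappa}=\lambda$ secures by forcing $\mathcal{P}(\pkld{\kappa}{j``\lambda})\subseteq M$: the inductive definition of $m$-stationarity quantifies only over subsets of $\pkld{\kappa}{j``\lambda}$ and their elements, so the transitive model $M$ computes it correctly.
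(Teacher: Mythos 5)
Your proposal is correct and is essentially identical to the paper's intended argument: the paper's own proof of this corollary simply says to repeat the proof of Theorem \ref{lsc} with a single $m$-stationary set in place of the pair $S_1,S_2$, using Lemma \ref{funt2} instead of Lemma \ref{funt}, which is exactly what you do. The one point you single out as delicate --- the downward transfer of ``$m$-stationary in $\pkld{\kappa}{j``\lambda}$'' to $M$ via $\mathcal{P}(\pkld{\kappa}{j``\lambda})\subseteq M$ --- is handled the same way in the paper's proof of Theorem \ref{lsc}.
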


\begin{proof} 
Analogous to the proof of Theorem \ref{lsc}. Noticing that, by Lemma \ref{funt2}, the induced isomorphism $f:\langle \pkl , \subseteq \rangle \rightarrow \langle  \pk{j''\lambda} , \subseteq \rangle$  also preserves $n$-stationary sets for all $n <\omega$. And taking throughout the proof only one $S \subseteq \pkl$  that is $m$-stationary  in $\pkl$.
\end{proof} 

\section*{Acknowledgments}

In compiling this research paper, I am grateful for the insightful discussions and invaluable guidance provided by my PhD advisor, Professor Joan Bagaria. Additionally, I extend my thanks to Professors Sakaé Fuchino and Hiroshi Sakai for engaging and enlightening discussions during my research stay at Kobe. Their perspectives and input have been instrumental in shaping the ideas presented in this paper.

\bibliographystyle{amsplain} 
\bibliography{masterbiblio}

\end{document}